\definecolor{darkred}{RGB}{139,0,0}
\definecolor{darkgreen}{RGB}{0,100,0}
\definecolor{darkmagenta}{RGB}{139,0,139}
\definecolor{darkpurple}{RGB}{110,0,180}
\definecolor{darkblue}{RGB}{40,0,200}
\definecolor{darkorange}{RGB}{255,140,0}
\newtheorem{thm}{Theorem}[section]
\newtheorem{prp}[thm]{Proposition}
\newtheorem{cor}[thm]{Corollary}
\newtheorem{lem}[thm]{Lemma}
\newtheorem{df}[thm]{Definition}
\newtheorem*{rem}{Remark}
\def\P{\mathcal{P}}
\def\S{\mathcal{S}}
\def\R{\mathbb{R}}
\def\N{\mathbb{N}}
\def\Q{\mathcal{Q}}
\def\F{\mathbb{F}}
\def\D{\mathbb{D}}
\def\1{\mathbbm{1}}
\newcommand{\bsj}{\boldsymbol{j}}
\newcommand{\bsk}{\boldsymbol{k}}
\newcommand{\bsm}{\boldsymbol{m}}
\newcommand{\bst}{\boldsymbol{t}}
\newcommand{\bsx}{\boldsymbol{x}}
\newcommand{\bsy}{\boldsymbol{y}}
\newcommand{\bsz}{\boldsymbol{z}}
\newcommand{\bszero}{\boldsymbol{0}}
\newcommand{\bsone}{\boldsymbol{1}}
\newcommand\dint{\,{\rm d}}
\newcommand{\ld}{{\rm ld}\,}
\DeclareMathOperator{\bmo}{BMO}
\begin{document}
\pagestyle{scrheadings}
\singlespacing

\title{Discrepancy of second order digital sequences in function spaces with dominating mixed smoothness}
\author{Josef Dick\thanks{This research was supported under Australian Research Council's Discovery Projects funding scheme (project number DP150101770).}, Aicke Hinrichs, Lev Markhasin, Friedrich Pillichshammer\thanks{F.P. is supported by the Austrian Science Fund (FWF): Project F5509-N26, which is a part of the Special Research Program "Quasi-Monte Carlo Methods: Theory and Applications".}}
\date{}
\maketitle

\centerline{{\bf Gewidmet dem Andenken an Klaus Friedrich Roth 1925-2015}}
\vspace{1cm}

\begin{abstract}
The discrepancy function measures the deviation of the empirical distribution of a point set in $[0,1]^d$ from the uniform distribution. In this paper, we study the classical discrepancy function with respect to the BMO and exponential Orlicz norms, as well as Sobolev, Besov and Triebel-Lizorkin norms with dominating mixed smoothness. We give sharp bounds for the discrepancy function under such norms with respect to infinite sequences.
\end{abstract}

\centerline{\begin{minipage}[hc]{130mm}{
{\em Keywords:} $\bmo$-discrepancy, higher order digital sequences, quasi-Monte Carlo, Besov spaces, Triebel-Lizorkin spaces, Sobolev spaces, Orlicz spaces\\
{\em MSC 2010:} Primary: 11K06, 11K38; Secondary: 32A37, 42C10, 46E30, 46E35, 65C05}
\end{minipage}} 

\section{Background}
Let $d,N$ be positive integers and let $\P_{N,d}$ be a point set in the $d$-dimensional unit cube $[0,1)^d$ with $N$ elements. The discrepancy function of $\P_{N,d}$ is defined as
\begin{align}\label{deflocdisc}
D_{\P_{N,d} }(\bsx) = \frac{1}{N}\sum_{\bsz \in \P_{N,d}} \chi_{[\bszero,\bsx)}(\bsz) - x_1 \cdots x_d.
\end{align}
Here $\chi_A$ denotes the characteristic function of a subset $A\in\R^d$. Thus $D_{\P_{N,d} }(\bsx)$ denotes the difference between the actual and expected proportions
of points that fall into $[\bszero,\bsx)=[0,x_1)\times\ldots\times[0,x_d)$, where $\bsx = (x_1, \ldots, x_d) \in [0,1]^d$.

We define the $L_p$-discrepancy of $\P_{N,d}$ as the $L_p$-norm of the discrepancy function, i.e., 
\[ L_{p,N}(\P_{N,d}) = \|D_{\P_{N,d}}|L_p([0,1]^d)\| \ \ \ \ \ \mbox{ for $p \in [1,\infty]$.} \] 

Let $\S_d$ be an infinite sequence in $[0,1)^d$ and let $N \in \mathbb{N}$. We define the discrepancy function of $\S_d$ as $D_{\S_d}^N(\bsx)=D_{\P_{N,d}}(\bsx)$, where the point set $\P_{N,d}$ consists of the first $N$ elements of $\S_d$, and we define the $L_p$-discrepancy of $\S_d$ as
\[ L_{p,N}(\S_d) = \|D_{\S_d}^N|L_p([0,1]^d)\| \ \ \ \ \ \mbox{ for $p \in [1,\infty]$.} \]

The $L_p$-discrepancy is a quantitative measure for the irregularity of distribution of finite point sets and of infinite sequences. The reader is refered to \cite{BC,DT,KN} for extensive introductions to this topic. 
\iffalse
It is well known that a sequence $\S_d$ is uniformly distributed modulo one in the sense of Weyl \cite{W} if and only if $L_{p,N}(\S_d)$ tends to zero for $N \rightarrow \infty$. The $L_p$-discrepancy is also closely related to the worst-case integration error in certain function spaces using quasi-Monte Carlo algorithms via variants of the Koksma-Hlawka inequality. This follows immediately from Hlawka's identity (which is also sometimes attributed to Zaremba); see \cite{hl61,zar} or also \cite{DP10,LP14,NW08}.
\fi

The conceptual difference in the concept between the discrepancy of finite point sets and infinite sequences is pointed out in \cite{Mat99} and \cite{DHMP16}: for finite point sets one is interested in the behavior of the whole set $\{\bsx_0,\bsx_1,\ldots,\bsx_{N-1}\}$ with a fixed number of elements $N$, whereas for infinite sequences one is interested in the discrepancy of all initial segments $\{\bsx_0\}$, $\{\bsx_0,\bsx_1\}$, $\{\bsx_0,\bsx_1,\bsx_2\}$, \ldots, $\{\bsx_0,\bsx_1,\bsx_2,\ldots,\bsx_{N-1}\}$, where $N=2,3,4,\ldots$. 
\iffalse
In this sense the discrepancy of finite point sets can be viewed as a static setting and the discrepancy of infinite sequences as a dynamic setting. 

Very often the dynamic setting in dimension $d$ is related to the static setting in dimension $d+1$ (see, for example, \cite[Chapter~2.2, Theorem~2.2, Example~2.2]{KN}). This will also be confirmed by our results.\\
\fi

It is well known from results of Roth~\cite{R54} and Schmidt~\cite{S77} that for every $p \in (1,\infty]$ and every $d \in \mathbb{N}$ there exists a positive constant $c_{p,d}$ with the following property:  for every finite $N$-element point set $\P_{N,d}$ in $[0,1)^d$ with $N \ge 2$ we have 
\begin{equation}\label{lbdlpdipts}
L_{p,N}(\P_{N,d}) \ge c_{p,d} \,N^{-1} (\log N)^{\frac{d-1}{2}}.
\end{equation}
\iffalse
This has been first shown in a celebrated paper by Roth~\cite{R54} for $p \ge 2$ and by Schmidt~\cite{S77} for $p \in (1,2)$. As shown by Hal\'{a}sz~\cite{H81} the estimate is also true for $p=1$ and $d=2$, i.e.,  there exists a positive constant $c_{1,2}$ with the following property:  for every finite $N$-element point set $\P_{N,2}$ in $[0,1)^2$ with $N \ge 2$ we have 
\begin{equation}\label{lbdl1D2dipts}
L_{1,N}(\P_{N,2}) \ge c_{1,2} \frac{(\log N)^{\frac{1}{2}}}{N}.
\end{equation} 
\fi
These results were extended to infinite sequences by Proinov \cite{P86} (see also \cite{DP14b} for a proof): for every $p \in(1,\infty]$ and every $d \in \mathbb{N}$, $d \ge 2$, there exists a positive constant $c_{p,d}$ with the following property: for every infinite sequence $\S_d$ in $[0,1)^d$ we have 
\begin{equation}\label{lbdlpdiseq}
L_{p,N}(\S_d) \ge c_{p,d} \,N^{-1} (\log N)^{\frac{d}{2}} \ \ \ \ \mbox{ for infinitely many $N \in \mathbb{N}$.}
\end{equation}
\iffalse
For $d=1$ this estimate is also valid for $p=1$, i.e., there exists a positive constant $c_{1,1}$ with the following property:  for every infinite sequence $\S_1$ in $[0,1)$ we have 
\begin{equation}\label{lbdl1D1diseq}
L_{1,N}(\S_1) \ge c_{1,1} \frac{(\log N)^{\frac{1}{2}}}{N}.
\end{equation}
This can be shown by combining Proinov's method \cite{P86} (see also \cite{DP14b}) with the result of Hal\'{a}sz \eqref{lbdl1D2dipts}.
\fi
The lower bound \eqref{lbdlpdipts} for finite point sets is known to be best possible in the order of magnitude in $N$, i.e., for every $d,N \in \mathbb{N}$, $N \ge 2$, one can find (and explicitly construct) an $N$-element point set $\P_{N,d}$ in $[0,1)^d$ with $L_p$-discrepancy of order 
\begin{equation}\label{uplpps}
L_{p,N}(\P_{N,d}) \ll_{p,d} \,N^{-1} (\log N)^{\frac{d-1}{2}}.
\end{equation}
(For functions $f,g:D \subseteq \mathbb{N} \rightarrow \mathbb{R}$ with $g \ge 0$ the notation $f(N) \ll g(N)$ means that there exists some $C>0$ such that $f(N) \le C g(N)$ for all $N \in D$. If we want to stress that $C$ depends on some parameters, say $a,b$, then this is indicated by writing $f(N) \ll_{a,b} g(N)$. If we have $f(N) \ll g(N)$ and $g(N) \ll f(N)$, then we write $f(N) \asymp g(N)$.) 

The first proof of \eqref{uplpps} was given by Roth \cite{R80} for $p= 2$ and arbitrary $d$ and finally by Chen \cite{C80} in the general case, albeit using a probabilistic approach. Explicit constructions are due to Chen and Skriganov \cite{CS02} and Skriganov \cite{S06} (the first time that for each $1 \le p < \infty$ a construction method of an optimal point set is given), Dick and Pillichshammer \cite{DP14a}, Dick \cite{D14} (the first proof where the construction method of the point set does not depend on $p$), and Markhasin \cite{M15}.
\iffalse
The result in \eqref{uplpps} was proved by Davenport \cite{D56} for $p= 2$, $d= 2$, by Roth \cite{R80} for $p= 2$ and arbitrary $d$ and finally by Chen \cite{C80} in the general case. Other proofs were found by Frolov~\cite{Frolov}, Chen~\cite{C83}, Dobrovol'ski{\u\i}~\cite{Do84}, Skriganov~\cite{Skr89, Skr94}, Hickernell and Yue~\cite{HY00}, and Dick and Pillichshammer~\cite{DP05b}. For more details on the history of the subject see the monograph \cite{BC}. Apart from Davenport, who gave an explicit construction in dimension $d=2$, these results are pure existence results and explicit constructions of point sets remained open until the beginning of this millennium. First explicit constructions of point sets with optimal order of $L_2$-discrepancy have been provided in 2002 by Chen and Skriganov \cite{CS02} for $p=2$ and in 2006 by Skriganov \cite{S06} for general $p$. Other explicit constructions are due to Dick and Pillichshammer \cite{DP14a} for $p=2$, and Dick \cite{D14} and Markhasin \cite{M15} for 
general $p$.
\fi

It is also known that the lower bound \eqref{lbdlpdiseq} for infinite sequences is best possible for all $p \in (1,\infty)$. This was shown by the authors of the present paper in the recent work \cite{DHMP16} where explicit constructions $\S_d$ of infinite sequences in arbitrary dimensions $d$ were provided whose $L_p$-discrepancy satisfies
\begin{equation}\label{uplppseq}
L_{p,N}(\S_{d}) \ll_{p,d} \,N^{-1} (\log N)^{\frac{d}{2}}.
\end{equation}
The sequences and methods developed in \cite{DHMP16} are the fundamental basis for the results that will be presented in this paper (see, for example, Lemma~\ref{le131416}).

On the other hand, the $L_\infty$-discrepancy remains elusive. We have constructions of infinite sequences $\S_d$ in $[0,1)^d$ (for example order 1 digital $(t,d)$-sequences as presented in Section~\ref{sec2}, see \cite{DP10,N87,N92}) such that
\begin{eqnarray}\label{bddsts}
L_{\infty,N}(\S_d) \ll_d \,N^{-1} (\log N)^d.
\end{eqnarray}
Regarding lower bounds, it is known that there exists some $c_d >0$ and $\eta_d\in (0,\tfrac{1}{2})$ such that for every sequence $\S_d$ in $[0,1)^d$ we have \[L_{\infty,N}(\S_d) \ge c_d \,N^{-1} (\log N)^{\frac{d}{2}+\eta_d} \ \ \ \mbox{for infinitely many $N \in \mathbb{N}$.}\] This result follows from a corresponding result for finite point sets by Bilyk, Lacey and Vagharshakyan~\cite{BLV08}. For growing $d$ the exponent $\eta_d$ in this estimate tends to zero.
\iffalse
In dimension $d=1$ we even know that for every sequence $\S_1 \in [0,1)$ we have $$L_{\infty,N}(\S_1) \ge c \frac{\log N}{N} \ \ \ \mbox{for infinitely many $N \in \mathbb{N}$}$$ for some positive $c$. This is a famous result of Schmidt~\cite{S72} (see also \cite{B82,L14}). Since the $L_{\infty}$-discrepancy of the van der Corput sequence is of order $(\log N)/N$ the exact order of $L_{\infty}$-discrepancy of infinite sequences in dimension $d=1$ is known.
\fi 
The quest for the exact order of the $L_{\infty}$-discrepancy in the multivariate case is a very demanding open question. Only for $d=1$ it is known that the exact order of the $L_{\infty}$-discrepancy of infinite sequences is $(\log N)/N$. This follows from \eqref{bddsts} together with a celebrated result of Schmidt~\cite{S72}.

\section{Intermediate norms}

It is now a natural and instructive question to ask what happens in intermediate spaces ``close'' to $L_{\infty}$. One standard example of such spaces is the space $\bmo^d$, which stands for {\it bounded mean oscillation}. 
For the definition of this space we need the concept of Haar functions: 

We define $\N_0=\N \cup \{0\}$ and $\N_{-1}=\N_0\cup\{-1\}$. Let $\D_j = \{0,1,\ldots, 2^j-1\}$ for $j \in \N_0$ and $\D_{-1} = \{0\}$. For $\bsj = (j_1,\dots,j_d)\in\N_{-1}^d$ let $\D_{\bsj} = \D_{j_1}\times\ldots\times \D_{j_d}$. For $\bsj\in\N_{-1}^d$ we write $|\bsj| = \max(j_1,0) + \cdots + \max(j_d,0)$.

For $j \in \N_0$ and $m \in \D_j$ we call the interval $I_{j,m} = \big[ 2^{-j} m, 2^{-j} (m+1) \big)$ the $m$-th dyadic interval in $[0,1)$ on level $j$. We put $I_{-1,0}=[0,1)$ and call it the $0$-th dyadic interval in $[0,1)$ on level $-1$. Let $I_{j,m}^+ = I_{j + 1,2m}$ and $I_{j,m}^- = I_{j + 1,2m+1}$ be the left and right half of $I_{j,m}$, respectively. For $\bsj \in \N_{-1}^d$ and $\bsm = (m_1, \ldots, m_d) \in \D_{\bsj}$ we call $I_{\bsj,\bsm} = I_{j_1,m_1} \times \ldots \times I_{j_d,m_d}$ the $\bsm$-th dyadic interval in $[0,1)^d$ on level $\bsj$. We call the number $|\bsj|$ the order of the dyadic interval $I_{\bsj,\bsm}$. Its volume is $2^{-|\bsj|}$.

Let $j \in \N_{0}$ and $m \in \D_j$. Let $h_{j,m}$ be the function on $[0,1)$ with support in $I_{j,m}$ and the constant values $1$ on $I_{j,m}^+$ and $-1$ on $I_{j,m}^-$. We put $h_{-1,0} = \chi_{I_{-1,0}}$ on $[0,1)$. The function $h_{j,m}$ is called the {\it $m$-th dyadic Haar function on level $j$}.

Let $\bsj \in \N_{-1}^d$ and $\bsm \in \D_{\bsj}$. The function $h_{\bsj,\bsm}$ given as the tensor product
\[ h_{\bsj,\bsm}(\bsx) = h_{j_1,m_1}(x_1) \cdots h_{j_d,m_d}(x_d) \]
for $\bsx = (x_1, \ldots, x_d) \in [0,1)^d$ is called a {\it dyadic Haar function} on $[0,1)^d$. The system of dyadic Haar functions $h_{\bsj,\bsm}$ for $\bsj \in \N_{-1}^d, \, \bsm \in \D_{\bsj}$ is called {\it dyadic Haar basis on $[0,1)^d$}.

It is well known that the system
\[ \left\{2^{\frac{|\bsj|}{2}}h_{\bsj,\bsm} \,:\,\bsj\in\N_{-1}^d,\,\bsm\in \D_{\bsj}\right\} \]
is an orthonormal basis of $L_2([0,1)^d)$, an unconditional basis of $L_p([0,1)^d)$ for $1 < p < \infty$ and a conditional basis of $L_1([0,1)^d)$. For any function $f\in L_2([0,1)^d)$ we have Parseval's identity
\[ \|f|L_2([0,1)^d)\|^2 = \sum_{\bsj \in \N_{-1}^d} 2^{|\bsj|} \sum_{\bsm\in \D_{\bsj}}|\langle f,h_{\bsj,\bsm}\rangle|^2, \] where $\langle \cdot , \cdot \rangle$ denotes the usual $L_2$-inner product, i.e., $\langle f ,g \rangle =\int_{[0,1]^d} f(\bsx) g(\bsx) \dint \bsx$. The terms $\langle f,h_{\bsj,\bsm}\rangle$ are called the {\it Haar coefficients} of the function $f$.\\

For an integrable function $f:[0,1]^d \rightarrow \mathbb{R}$ we define 
\begin{equation}\label{eq:defbmo}
\|f |\bmo^d\|^2 =\sup_{U \subseteq [0,1)^d} \lambda_d(U)^{-1} \sum_{\bsj \in \mathbb{N}_0^d} 2^{|\bsj|} \sum_{\bsm \in \D_{\bsj}\atop I_{\bsj,\bsm} \subseteq U}|\langle f, h_{\bsj,\bsm}\rangle|^2,
\end{equation} 
where $h_{\bsj,\bsm}$ is the $\bsm$-th dyadic Haar function on level $\bsj$, $\lambda_d$ is the $d$-dimensional Lebesgue measure and the supremum is taken over all measurable sets $U \subseteq [0,1)^d$. 

The space $\bmo^d$ contains all integrable functions $f$ with finite norm $\|f |\bmo^d\|$. Note that strictly speaking $\|f |\bmo^d\|$ is only a seminorm, since it vanishes on linear combinations of functions which are constant in one or more coordinate directions. This means that formally we need to consider a factor space over such functions. For a more detailed study of the spaces $\bmo^d$ we refer to \cite{CF80} and \cite{CWW85}.

We consider the $\bmo$-seminorm of the discrepancy function of infinite sequences and call this the $\bmo$-discrepancy. The first main result of this paper is the following.

\iffalse
\begin{thm}\label{thm_bmo}
There exists an explicit construction of an infinite sequence $\S_d$ in $[0,1)^d$ with the property that $$\|D_{\S_d}^N | \bmo^d\| \ll_d \,N^{-1} (\log N)^{\frac{d}{2}} \ \ \ \mbox{ for all $N \ge 2$.}$$  
\end{thm}
A more concrete version of this result will be stated in Section~\ref{sec_exp_constr} (see Theorem~\ref{thm_bmo_upbd}) and proved in Section~\ref{sec_bmo1}. The order of magnitude $(\log N)^{d/2}/N$ is best possible, as the following result shows.
\fi

\begin{thm}\label{thm:bmo}
Suppose that $d \in \mathbb{N}$. For every infinite sequence $\S_d$ in $[0,1)^d$, 
\begin{equation}\label{bmo:lowbd}
\|D_{\S_d}^N|\bmo^d\| \gg  \,N^{-1} (\log N)^{\frac{d}{2}}\ \ \ \mbox{ for infinitely many $N \in \mathbb{N}$.}
\end{equation}
Furthermore, there exists an infinite sequence $\S_d$ in $[0,1)^d$ such that 
\begin{equation}\label{bmo:ubd}
\|D_{\S_d}^N | \bmo^d\| \ll_d \,N^{-1} (\log N)^{\frac{d}{2}} \ \ \ \mbox{ for all $N \in \mathbb{N}\setminus\{1\}$.}
\end{equation}  
\end{thm}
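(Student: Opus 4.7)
The plan is to establish both directions via the Haar expansion of $D_{\S_d}^N$, treating \eqref{bmo:lowbd} and \eqref{bmo:ubd} separately but using Parseval-type identities in both cases.

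For the lower bound \eqref{bmo:lowbd} I take $U = [0,1)^d$ in \eqref{eq:defbmo}. Since $\lambda_d([0,1)^d) = 1$ and every dyadic cell is contained in $[0,1)^d$, this immediately gives
\[
 \|D_{\S_d}^N|\bmo^d\|^2 \ge \sum_{\bsj\in\N_0^d} 2^{|\bsj|}\sum_{\bsm\in\D_{\bsj}}|\langle D_{\S_d}^N,h_{\bsj,\bsm}\rangle|^2.
\]
The right-hand side is precisely the sub-sum of the Parseval identity that Roth's classical test function bounds from below in the proof of \eqref{lbdlpdipts}: Roth's product test function only contains Haar components indexed by $\bsj\in\N_0^d$, so no Haar index equal to $-1$ ever appears. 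Applying Proinov's averaging technique from \cite{P86,DP14b}, which upgrades a point-set lower bound in dimension $d+1$ into a sequence lower bound in dimension $d$, to this sub-sum rather than to the full $L_2$-norm yields
\[
 \sum_{\bsj\in\N_0^d} 2^{|\bsj|}\sum_{\bsm\in\D_{\bsj}}|\langle D_{\S_d}^N,h_{\bsj,\bsm}\rangle|^2 \gg N^{-2}(\log N)^d
\]
for infinitely many $N$, from which \eqref{bmo:lowbd} follows.

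For the upper bound \eqref{bmo:ubd} I take the explicit second-order digital $(t,d)$-sequence $\S_d$ of \cite{DHMP16}, whose individual Haar coefficients $\langle D_{\S_d}^N,h_{\bsj,\bsm}\rangle$ are controlled by Lemma~\ref{le131416}. Set $n = \lceil\log_2 N\rceil$. A standard reduction shows that the supremum in \eqref{eq:defbmo} may be restricted to dyadic boxes $U$, since the inner sum only sees cells $I_{\bsj,\bsm}\subseteq U$. Fixing such a $U$, I split the $\bsj$-sum into the low range $|\bsj|\le n$ and the high range $|\bsj|>n$. In the low range, the number of dyadic cells $I_{\bsj,\bsm}\subseteq U$ is at most $2^{|\bsj|}\lambda_d(U)$, so the factor $\lambda_d(U)$ cancels the prefactor $\lambda_d(U)^{-1}$ from \eqref{eq:defbmo} and the estimate reduces to the $L_2$-type computation already performed in \cite{DHMP16} for the same sequence, yielding $\ll_d N^{-2}(\log N)^d$. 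In the high range, the sharp Haar coefficient bound from Lemma~\ref{le131416} gives geometric decay in $|\bsj|-n$, so this tail contributes only $O_d(N^{-2})$ uniformly in $U$.

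The main obstacle is the uniformity of the upper estimate in $U$: a naive argument loses a logarithmic factor unless, in the transition regime $|\bsj|\approx -\log_2\lambda_d(U)$, one uses the correct counting bound $\min(2^{|\bsj|}\lambda_d(U),\,2^{|\bsj|})$ for the number of cells $I_{\bsj,\bsm}\subseteq U$ and applies the bound adapted to each regime. Balancing these two estimates across the relevant range of $\bsj$ is the delicate step, but it mirrors the $L_2$-analysis of \cite{DHMP16} with the $U$-localization inserted and introduces no genuinely new difficulty, so the rest is bookkeeping.
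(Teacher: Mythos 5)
Your overall architecture coincides with the paper's: for \eqref{bmo:lowbd} you take $U=[0,1)^d$ in \eqref{eq:defbmo} and transfer a finite-point-set lower bound for the resulting Haar sub-sum via Proinov's method, and for \eqref{bmo:ubd} you use the order $2$ digital sequence, the coefficient bounds of Lemma~\ref{le131416}, and the counting bound $\#\{\bsm: I_{\bsj,\bsm}\subseteq U\}\le 2^{|\bsj|}\lambda_d(U)$. On the lower bound your sketch is essentially right, but transferring Proinov's argument to the restricted norm $\sum_{\bsj\in\N_0^d}2^{|\bsj|}\sum_{\bsm}|\langle\cdot,h_{\bsj,\bsm}\rangle|^2$ is not purely formal: one needs the slice inequality $\|f\,|\,{\cal D}_0^{d+1}\|\le\sup_{t_{d+1}}\|f(\cdot,t_{d+1})\,|\,{\cal D}_0^d\|$, which the paper proves as Lemma~\ref{le0} using orthogonality of the Haar system in the last variable; you assert the transference without addressing this. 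Also, your ``standard reduction'' of the supremum in \eqref{eq:defbmo} to dyadic boxes is false for $d\ge2$ (two dyadic boxes can overlap without being nested, so one cannot reduce a union of cells to a single box via the mediant inequality); fortunately it is also unnecessary, since the counting bound works for arbitrary measurable $U$.

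The genuine gap is in the high range $|\bsj|\ge\ld N$ of the upper bound, which is exactly the part the paper calls the most involved. Your claim that Lemma~\ref{le131416} gives ``geometric decay in $|\bsj|-n$'' and hence a contribution $O_d(N^{-2})$ uniformly in $U$ does not survive inspection. For a cell containing a point one has $2^{|\bsj|}|\langle D_{\S_d}^N,h_{\bsj,\bsm}\rangle|^2\ll 2^tN^{-2}2^{-|\bsj|}$; if you count at most $N$ such cells per level, the level sums do decay geometrically, but then nothing cancels the prefactor $\lambda_d(U)^{-1}$, which can be as large as roughly $N^{2d}$; if instead you count at most $2^{|\bsj|}\lambda_d(U)$ cells per level so as to cancel $\lambda_d(U)^{-1}$, each level contributes a constant of order $2^tN^{-2}$ with no decay at all, and the sum over infinitely many levels diverges. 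To close this you must (a) split off the volume part $x_1\cdots x_d$, whose coefficients decay like $2^{-2|\bsj|}$ and are harmless, and (b) for the counting part exploit the structural hypothesis $e_{i,k,\ell}=0$ for $k>2\ell$: it forces every contributing cell to satisfy $j_i\le 2n$, so only $O_d(n^d)$ levels contribute, yielding $\ll 2^tN^{-2}(\log N)^d$ (not $O(N^{-2})$), which still suffices for the theorem. The paper instead obtains the sharper $N^{-2}(\log N)^{d-1}$ for this range by passing to the maximal dyadic cells of the family ${\cal J}$, proving the measure bound \eqref{toshow} by a disjointness argument, and applying Bessel's inequality to the localized counting functions $C^J$. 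Either way, a concrete argument for this range, resting on the finite binary expansion of the points, is indispensable and is absent from your sketch.
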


The proof of this result will be presented in Section~\ref{sec_bmo2}. \\

As another example we stude the exponential Orlicz norm, which is closer to the $L_\infty$-norm than any $L_p$-norm  with $1 \le p < \infty$. Thus bounds on the exponential Orlicz norm of the discrepancy function shed more light on the behavior of the $L_\infty$-norm. For a probability space $(\Omega, P)$, let $\mathbb{E}$ denote the expectation over $(\Omega, P)$. To define the Orlicz norm, let $\psi: [0, \infty) \to [0, \infty)$ be a convex function with $\psi(x) = 0$ if and only if $x = 0$. The Orlicz norm $L^\psi$ of a $(\Omega,P)$-measurable function $f$ is now given by
\begin{equation*}
\|f | L^\psi \| = \inf\{ K > 0: \mathbb{E} \psi(|f|/K) \le 1\},
\end{equation*}
where we set $\inf \emptyset = \infty$. The Orlicz space $L^\psi$ consists of all functions $f$ with finite Orlicz norm $\|f | L^\psi \|$.

For $\alpha > 0$, one obtains the exponential Orlicz norm $\exp(L^\alpha) = L^{\psi_\alpha}$ by choosing $\psi_\alpha$ to be a convex function which equals $\exp(x^\alpha) -1$ for sufficiently large $x$, depending on $\alpha$ (for $\alpha \ge 1$ we can use $\psi_\alpha(x) = \exp(x^\alpha) - 1$ for all $x \ge 0$). More information can be found in \cite{LT77}. 

\begin{thm}\label{thm:exporlicz}
There exists an infinite sequence $\S_d$ in $[0,1)^d$ such that 
\begin{equation*}
\| D_{\S_d}^N | \exp(L^{\beta}) \| \ll_d \,N^{-1} (\log N)^{d-\frac{1}{\beta} } \ \ \ \mbox{ for all $N \in \mathbb{N}\setminus\{1\}$ and for all $\frac{2}{d-1} \le \beta < \infty$.}
\end{equation*}
\end{thm}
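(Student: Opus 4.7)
The plan is to bound the exponential Orlicz norm by interpolating $L_p$-discrepancy estimates (for moderate $p$) with the $L_\infty$-discrepancy estimate (for very large $p$), then inserting both into the power-series expansion of the Orlicz integral. I would take $\S_d$ to be the higher-order digital sequence from \cite{DHMP16}. The first step is to upgrade the qualitative bound \eqref{uplppseq} (recorded here as Lemma~\ref{le131416}) to a quantitative version with explicit $p$-dependence,
\[
\|D_{\S_d}^N|L_p([0,1)^d)\| \le C_d\,p^{d/2}\,N^{-1}(\log N)^{d/2}\qquad (N\ge 2,\ p\ge 2),
\]
by tracking constants through the iterated Khintchine / Littlewood--Paley estimates that underlie the Haar analysis of $D_{\S_d}^N$ in \cite{DHMP16}. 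The second input is the $L_\infty$-bound \eqref{bddsts}, giving $\|D_{\S_d}^N\|_\infty\le C'_d\,N^{-1}(\log N)^d$.

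Next I fix $K:=c\,N^{-1}(\log N)^{d-1/\beta}$ with $c=c(d,\beta)$ a large constant to be chosen. Since $\psi_\beta$ agrees with $\exp(x^\beta)-1$ for large $x$ and is bounded for small $x$, it suffices to show that
\[
\sum_{k\ge 1}\frac{\|D_{\S_d}^N\|_{k\beta}^{k\beta}}{k!\,K^{k\beta}} \le \tfrac12.
\]
I split this series at the threshold $k_0:=\lceil(\log N)/\beta\rceil$. For $k\le k_0$ I insert the quantitative $L_p$-bound with $p=k\beta$; a direct computation together with Stirling $k!\ge (k/e)^k$ produces
\[
\frac{\|D_{\S_d}^N\|_{k\beta}^{k\beta}}{k!\,K^{k\beta}} \le \Bigl[(C_d/c)^{\beta}\,\beta^{d\beta/2}\,e\,(k/\log N)^{d\beta/2-1}\Bigr]^{k}.
\]
The hypothesis $\beta\ge 2/(d-1)$ ensures $d\beta/2-1>0$, and since $k/\log N\le 1/\beta$ for $k\le k_0$, the bracketed factor can be forced below $1/2$ by enlarging $c$, yielding a convergent geometric series. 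For $k>k_0$ I use the crude bound $\|D_{\S_d}^N\|_{k\beta}\le\|D_{\S_d}^N\|_\infty$; combined with Stirling and $k\ge (\log N)/\beta$ one obtains
\[
\frac{\|D_{\S_d}^N\|_{k\beta}^{k\beta}}{k!\,K^{k\beta}} \le \frac{(C'_d/c)^{k\beta}(\log N)^{k}}{k!} \le \frac{1}{\sqrt{k}}\Bigl[(C'_d/c)^{\beta}\,e\beta\Bigr]^{k},
\]
again geometrically small for $c$ large. Summing over both ranges and choosing $c$ sufficiently large gives $\|D_{\S_d}^N|\exp(L^{\beta})\|\le K$, which is the claimed bound.

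The main obstacle is the first step: extracting the explicit $p^{d/2}$ dependence of the $L_p$-discrepancy for the sequences of \cite{DHMP16}. This requires revisiting the proof of Lemma~\ref{le131416} and propagating quantitative constants through the Haar expansion, rather than quoting it as a black box. Once that quantitative $L_p$-bound is in hand, the splitting at $k_0\asymp(\log N)/\beta$ and the Stirling estimates are routine. The role of the hypothesis $\beta\ge 2/(d-1)$ is precisely to keep $d\beta/2-1$ bounded away from $0$, which is what turns the small-$k$ contribution into a geometric series with ratio independent of $N$.
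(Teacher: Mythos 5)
Your overall architecture is sound and genuinely different from the paper's: you reduce everything to a quantitative $L_p$-bound $\|D_{\S_d}^N|L_p\|\le C_d\,p^{d/2}N^{-1}(\log N)^{d/2}$ and then run a direct moment computation on the Taylor series of $\psi_\beta$. The second half of your argument is correct (it is in essence a reproof of Proposition~\ref{prp_exp_Lp}, which you could simply have invoked: given your Step~1 and the $L_\infty$-bound, $\sup_{p>1}p^{-1/\beta}\|D_{\S_d}^N|L_p\|$ is attained near $p\asymp\log N$ and gives exactly $N^{-1}(\log N)^{d-1/\beta}$). But the entire mathematical content of the theorem sits in Step~1, which you assert rather than prove. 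Via Proposition~\ref{prp_exp_Lp}, the theorem at the endpoint $\beta=2/(d-1)$ is \emph{equivalent} to a quantitative $L_p$-bound of the form $\|D_{\S_d}^N|L_p\|\ll p^{(d-1)/2}N^{-1}(\log N)^{(d+1)/2}$, and your claimed bound is at least as strong in the relevant range $p\lesssim\log N$; so you have reduced the theorem to a lemma that is at least as hard as the theorem. That is a genuine gap, and you acknowledge as much.

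The specific route you propose for Step~1 --- ``tracking constants through the iterated Khintchine / Littlewood--Paley estimates that underlie the Haar analysis in \cite{DHMP16}'' --- does not work as described. The $L_p$-proof in \cite{DHMP16} goes through a Haar characterization of $S^0_{\bar p,2}B$ with $\bar p=\max(p,2)$, whose implied constant has untracked $p$-dependence, and its internal summations (e.g.\ geometric series with ratio $2^{-2/\bar p}$ in the high-frequency range $|\bsj|\ge\ld N$) each contribute further powers of $p$ when made explicit; naive bookkeeping lands well above $p^{d/2}$. Your claimed bound is nevertheless \emph{true}, but the clean way to get it is different: Lemma~\ref{le131416} gives uniform bounds on the layer square functions, and summing their sup-norms in $\ell^2$ over the layers (rather than $\ell^1$, and using for $|\bsj|\ge\ld N$ that Haar coefficients of the counting part vanish once some $j_i>2n$) yields $\|SD_{\S_d}^N|L_\infty\|\ll 2^t N^{-1}(\log N)^{d/2}$; combined with Lemma~\ref{lem_lp} this is exactly $\|D_{\S_d}^N|L_p\|\ll p^{d/2}N^{-1}(\log N)^{d/2}$. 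Note that this is morally the paper's own route: the hyperbolic Chang--Wilson--Wolff inequality (Proposition~\ref{CWWIneq}) is precisely the device that converts $L_\infty$-control of the square function into exponential integrability, applied layer by layer, with a separate parent-interval argument for the high-frequency counting part and a final interpolation via Proposition~\ref{prp_exp_int_inf}. So either supply the square-function estimate explicitly, or abandon Step~1 as stated; as written the proposal does not constitute a proof.
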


%A more concrete version of this result will be stated in Section~\ref{sec_exp_constr} (see Theorem~\ref{thm_exp_upbd}) and proved in Section~\ref{sec_exp1}.

From Proposition~\ref{prp_exp_Lp} below in conjunction with \eqref{lbdlpdiseq} it follows that for every infinite sequence $\S_d$ in $[0,1)^d$ the discrepancy $\| D_{\S_d}^N | \exp(L^{\beta}) \|$ is at least of order $(\log N)^{d/2}/N$ for infinitely many $N \in \N$. However, a matching lower bound for  $\| D_{\S_d}^N | \exp(L^{\beta})\|$ seems to be presently beyond reach, even for finite point sets as pointed out in \cite{BM15} (see the Remark after \cite[Theorem~1.3]{BM15} for a short discussion about this problem). There is only one singular result for {\it finite} point sets $\P_{N,2}$ in dimension $d=2$, $\| D_{\P_{N,2}} | \exp(L^{\beta}) \| \ge c (\log N)^{1-1/\beta}/N$ for $2 \le \beta < \infty$ and some $c>0$, according to \cite{BLPV09}, for which the upper and lower bound match. \\

We close this section with some further technical results concerning the exponential Orlicz norm. The reader familiar with this topic may also skip straight to the beginning of the next section. 

An equivalent way of stating the exponential Orlicz norm is using the following result, which can be shown using the Taylor series expansion of $\exp(x)$ and Stirling's formula.
\begin{prp}\label{prp_exp_Lp}
For any $\beta > 0$, the following equivalence holds
\begin{equation*}
\| f | \exp(L^\beta)\| \asymp_d \sup_{p > 1} p^{-\frac{1}{\beta}} \|f | L_p([0,1]^d)\|.
\end{equation*}
\end{prp}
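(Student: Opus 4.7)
The plan is to compare the two quantities by expanding the exponential in a Taylor series and matching the $L_p$-norms appearing in the series coefficients with $\sup_{p > 1} p^{-1/\beta}\|f|L_p\|$ via Stirling's formula $k! \asymp \sqrt{k}\,(k/e)^k$. A preliminary reduction handles the fact that for $\beta < 1$ the map $x \mapsto \exp(x^\beta)-1$ is not convex near the origin: one chooses $\psi_\beta$ to coincide with $\exp(x^\beta)-1$ for $x \geq x_0$ large enough and to be linear on $[0,x_0]$. Since the two functions differ only on the bounded interval $[0,x_0]$, the Orlicz norms they define are equivalent with constants depending only on $\beta$ (and $d$ only through the underlying probability space). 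From now on it suffices to work with $\exp(x^\beta)-1$ directly.

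For the easier direction, assume $M := \sup_{p > 1} p^{-1/\beta}\|f|L_p\| < \infty$. Integrating the series termwise, for any $K > 0$,
\begin{equation*}
\mathbb{E}\bigl[\exp((|f|/K)^\beta) - 1\bigr] = \sum_{k \geq 1}\frac{\|f|L_{\beta k}\|^{\beta k}}{k!\,K^{\beta k}} \leq \sum_{k \geq 1}\frac{M^{\beta k}(\beta k)^{k}}{k!\,K^{\beta k}}.
\end{equation*}
Stirling's inequality $k! \geq (k/e)^k$ gives $(\beta k)^k/k! \leq (e\beta)^k$, so the sum is bounded by $\sum_{k \geq 1}(e\beta M^\beta/K^\beta)^k$, which is $\leq 1$ whenever $K^\beta \geq 2 e\beta M^\beta$. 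This yields $\|f|\exp(L^\beta)\| \lesssim_\beta M$.

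For the reverse direction, set $K := \|f|\exp(L^\beta)\|$. Expanding as above, each individual term of the series is bounded by $1$, so
\begin{equation*}
\frac{\|f|L_{\beta k}\|^{\beta k}}{k!\,K^{\beta k}} \leq 1, \qquad \text{hence} \qquad \|f|L_{\beta k}\| \leq K\,(k!)^{1/(\beta k)} \lesssim K\,k^{1/\beta}
\end{equation*}
by Stirling. This controls $\|f|L_p\|$ at the discrete set of exponents $p = \beta k$, and it remains to pass to a supremum over all real $p > 1$. For any $p > 1$ choose $k \in \mathbb{N}$ with $\beta k \leq 2p \leq 2\beta k$ (or $p \leq \beta$, in which case one uses monotonicity $\|f|L_p\| \leq \|f|L_\beta\|$ up to a factor depending on the unit Lebesgue measure of $[0,1]^d$); monotonicity of $L_p$-norms on a probability space yields $\|f|L_p\| \leq \|f|L_{\beta k}\| \lesssim K k^{1/\beta} \asymp K p^{1/\beta}$. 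Taking the supremum gives $\sup_{p > 1} p^{-1/\beta}\|f|L_p\| \lesssim_\beta K$, completing the equivalence.

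No single step is truly hard; the only point requiring attention is the convexification of $\psi_\beta$ for small $\beta$, and the bookkeeping in passing from integer indices $\beta k$ to all real $p > 1$, both of which cost only constants depending on $\beta$ (and on $d$ only through the fixed probability space $[0,1]^d$).
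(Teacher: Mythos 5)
Your argument is correct and follows exactly the route the paper indicates for this proposition (the paper gives no detailed proof, only the remark that it ``can be shown using the Taylor series expansion of $\exp(x)$ and Stirling's formula''): term-by-term comparison of the exponential series with the $L_{\beta k}$-norms, Stirling in both directions, plus the standard reductions for the convexification of $\psi_\beta$ when $\beta<1$ and for interpolating from the discrete exponents $\beta k$ to all $p>1$. The only (harmless) loose end is that in the forward direction the finitely many terms with $\beta k\le 1$ also need the monotonicity-of-$L_p$-norms fix you already invoke in the reverse direction, since the supremum defining $M$ only ranges over $p>1$.
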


Our proof will make use of the Littlewood-Paley inequality, which we state in the following as a lemma.
\begin{lem}\label{lem_lp}
Let $1 < p < \infty$. Then for a function $f:[0,1)^d \to \mathbb{R}$ we have
\begin{equation*}
\| f | L_p([0,1)^d)  \| \ll_d\; p^{d/2} \| Sf | L_p([0,1)^d) \|,
\end{equation*}
where the square function $Sf$ is given by
\begin{equation*}
Sf(\bsx) = \left( \sum_{\bsj \in \mathbb{N}_{-1}^d} 2^{2|\bsj|} \sum_{\bsm \in \D_{\bsj}} |\langle f, h_{\bsj, \bsm} \rangle |^2 \chi_{I_{\bsj, \bsm}} \right)^{1/2}.
\end{equation*}
\end{lem}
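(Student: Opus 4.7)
The plan is to deduce the $d$-dimensional estimate from a one-dimensional Littlewood-Paley inequality with sharp $p$-dependence, and then to iterate across the $d$ coordinates, picking up one factor of $\sqrt{p}$ per iteration thanks to the tensor product structure of the Haar basis on $[0,1)^d$.

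First I would establish the case $d=1$. For $g \in L_p([0,1))$ with $1 < p < \infty$, the Burkholder--Gundy square function inequality for dyadic martingales yields
$$\|g | L_p([0,1))\| \ll \sqrt{p}\,\|S^{(1)} g | L_p([0,1))\|,$$
where $S^{(1)}g$ is the one-dimensional Haar square function. The $\sqrt{p}$ behaviour, rather than the linear $p$-dependence of the Burkholder inequality for general martingales, is sharp and reflects the fact that a dyadic Haar martingale increment is supported on a single atom on which it takes constant absolute value. This puts the square function inequality in the sub-Gaussian regime governed by Khintchine's inequality, where the sharp $L_p$-constant for Rademacher sums is of order $\sqrt{p}$.

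Next I would iterate across the coordinates. By the tensor product structure $h_{\bsj,\bsm}(\bsx)=h_{j_1,m_1}(x_1)\cdots h_{j_d,m_d}(x_d)$ and Fubini, $f$ can be viewed as an $L_p([0,1)^{d-1})$-valued function of $x_1$. Since $L_p$ is a UMD space for $1 < p < \infty$, the step above extends to the vector-valued setting without loss in the $p$-dependence, giving $\|f|L_p([0,1)^d)\| \ll \sqrt{p}\, \|S_1 f|L_p([0,1)^d)\|$, where $S_1 f$ is the partial Haar square function in the variable $x_1$ alone. Iterating the same argument in $x_2,\ldots,x_d$ contributes a further factor of $\sqrt{p}$ at each step, so that after $d$ iterations one obtains a total constant $C^d p^{d/2}$. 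Because of the multiplicativity $2^{|\bsj|} = \prod_i 2^{\max(j_i,0)}$ and the product form of $h_{\bsj,\bsm}$, the $d$-fold iterated partial square function agrees precisely with the full product square function $Sf$ defined in the statement.

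The main obstacle is the vector-valued step: one has to ensure that extending the one-dimensional inequality to a UMD-valued setting does not degrade the sharp $\sqrt{p}$ constant (a naive use of Burkholder's inequality would give an extra factor of $\sqrt{p}$ per iteration and ruin the final exponent). A clean way to sidestep this bookkeeping is to randomize via Rademacher signs: by unconditionality of the Haar basis, the $L_p$-norm of $f$ is comparable to the $L_p$-norm of any sign-modification of its Haar expansion, so after introducing signs one can apply the pointwise Khintchine inequality in each coordinate separately. Each of the $d$ Khintchine applications produces the factor $\sqrt{p}$ directly and reassembles the coefficients into $Sf$ without any further analysis, giving the claimed bound.
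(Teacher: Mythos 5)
First, a point of comparison: the paper does not prove Lemma~\ref{lem_lp} at all --- it is stated as a known tool (the dyadic Littlewood--Paley inequality with the sharp $p^{d/2}$ growth, in the spirit of \cite{CWW85} and of Fefferman--Pipher, and used similarly in \cite{BM15}), so you are supplying an argument the authors simply cite. Your overall strategy --- a one-dimensional square-function inequality with constant $O(\sqrt{p})$, iterated once per coordinate --- is indeed the standard route to the product bound $p^{d/2}$, and your $d=1$ statement is correct, including uniformly as $p\to 1^{+}$ where the constant stays bounded.

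The gap is in the mechanism you propose for the iteration: neither of your two suggestions actually delivers $\sqrt{p}$ per coordinate. Invoking UMD is the wrong framework. What the iteration requires is the one-dimensional inequality for $\ell_2$-valued functions --- after applying $S_1$, the object to be controlled in $x_2$ is the $\ell_2$-aggregate of the partial Haar coefficients $2^{\max(j_1,0)}\langle f,h_{j_1,m_1}\rangle\chi_{I_{j_1,m_1}}$, which are still functions of the remaining variables --- and the reason this costs nothing extra is that the good-$\lambda$ (or weighted-inequality) proofs of $\|g\,|\,L_p\|\ll\sqrt{p}\,\|S^{(1)}g\,|\,L_p\|$ go through verbatim for Hilbert-space-valued martingale differences; it is emphatically not because $L_p$ is UMD, since the UMD constant of $L_p$ grows linearly in $p$ and would destroy the exponent. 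More seriously, your ``clean sidestep'' via unconditionality plus Khintchine is quantitatively wrong: the unconditionality constant of the Haar system in $L_p$ is of order $p^{*}-1\asymp p$ (Burkholder's sharp result), so the chain $\|f\,|\,L_p\|\le\beta_p\,\|\sum\epsilon_{\bsj,\bsm}c_{\bsj,\bsm}h_{\bsj,\bsm}\,|\,L_p\|$ followed by pointwise Khintchine yields a constant of order $\beta_p\sqrt{p}\asymp p^{3/2}$ per coordinate, hence $p^{3d/2}$ in total rather than $p^{d/2}$. Randomization cannot recover the sharp constant here because the passage from $f$ to its sign-modified expansion is itself more expensive than the inequality you are trying to prove. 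To repair the argument, either iterate the $\ell_2$-valued one-dimensional inequality as above, or run the Chang--Wilson--Wolff weighted inequality $\int|f|^2w\ll\int(Sf)^2Mw$ coordinatewise and dualize; the factor $\sqrt{p}$ per variable then comes from the $L^{q}$ operator norm of the maximal function with $q=(p/2)'$.
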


An essential tool in the proof of the bound on the exponential Orlicz norm of the discrepancy function is the following hyperbolic Chang-Wilson-Wolff inequality. It relates the exponential Orlicz norm of a function to the $L_\infty$-norm of the corresponding square function. This inequality was also recently used in \cite{BM15}. We refer to this paper for a short proof.

\begin{prp}[Hyperbolic Chang-Wilson-Wolff inequality]\label{CWWIneq}
Assume that $f$ is a hyperbolic sum of multiparameter Haar functions, i.e., $f \in \mathrm{span}\{h_{\bsj,\bsm}: |\bsj| = n\}$ for some $n \in \mathbb{N}$. Then
\begin{equation*}
\| f | \exp(L^{2/(d-1)}) \| \ll \| S f | L_\infty([0,1)^d) \|.
\end{equation*}
\end{prp}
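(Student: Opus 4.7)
The plan is to apply Proposition~\ref{prp_exp_Lp} with $\beta = 2/(d-1)$ to reduce the claim to the polynomial-in-$p$ estimate
$$\|f | L_p([0,1)^d)\| \ll p^{(d-1)/2}\, \|Sf | L_\infty([0,1)^d)\|$$
for all $p \geq 2$. This is the standard entry point for proofs of Chang--Wilson--Wolff-type subgaussian estimates: once the $L_p$ norms of $f$ grow at most like $p^{(d-1)/2}$, the Orlicz bound with exponent $2/(d-1)$ follows directly from the equivalence in Proposition~\ref{prp_exp_Lp}.

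Naively combining Lemma~\ref{lem_lp} with the trivial inequality $\|Sf | L_p\| \leq \|Sf | L_\infty\|$ yields only $\|f\|_p \ll p^{d/2}\|Sf | L_\infty\|$, which corresponds to the weaker Orlicz exponent $2/d$. The improvement from $p^{d/2}$ to $p^{(d-1)/2}$ must therefore be extracted from the hyperbolic constraint $|\bsj|=n$, which localises the Haar spectrum on a $(d-1)$-parameter slice of $\N_{-1}^d$. I would realise this gain by singling out one coordinate, say $x_d$: for each fixed $(x_1,\ldots,x_{d-1})$, the fibre $x_d \mapsto f(\bsx)$ is a one-parameter dyadic Haar polynomial whose one-dimensional square function is pointwise dominated by $Sf(\bsx) \leq \|Sf | L_\infty\|$. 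Applying the classical one-parameter Chang--Wilson--Wolff inequality fibrewise produces a subgaussian tail in $x_d$ with constant $\|Sf | L_\infty\|$; iterating the one-parameter Littlewood-Paley inequality in the remaining $d-1$ coordinates then contributes exactly $d-1$ powers of $p^{1/2}$, instead of the $d$ powers produced by a naive application of Lemma~\ref{lem_lp}.

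The main obstacle is making the fibrewise one-parameter CWW estimate in the $x_d$-direction rigorously compatible with the $d-1$ outer iterations of Littlewood-Paley in the remaining coordinates, i.e.\ verifying that the pointwise subgaussian tail along $x_d$-fibres survives the $L_p$-integrations in the other directions without an extra factor of $p^{1/2}$. A clean route is a dyadic good-$\lambda$ argument combined with Khintchine-type bounds applied coordinate by coordinate in the outer $d-1$ variables; alternatively one can proceed via a one-sided BMO formulation, which is essentially the route of the short proof in \cite{BM15} cited by the authors. Once the polynomial-in-$p$ $L_p$ bound is in hand, Proposition~\ref{prp_exp_Lp} repackages it as the announced $\exp(L^{2/(d-1)})$ estimate.
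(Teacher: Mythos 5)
First, a point of reference: the paper does not actually prove this proposition --- it is quoted as a known result and the reader is referred to \cite{BM15} for a short proof, so there is no internal argument to compare against. Your reduction via Proposition~\ref{prp_exp_Lp} to the polynomial bound $\|f|L_p\|\ll p^{(d-1)/2}\|Sf|L_\infty\|$ is correct and is indeed the standard entry point, and you correctly observe that Lemma~\ref{lem_lp} alone only yields $p^{d/2}$. But the heart of the matter --- gaining one factor of $p^{1/2}$ from the hyperbolic constraint --- is precisely the step you never carry out, and the mechanism you sketch for it is flawed. You claim that for fixed $x_*=(x_1,\dots,x_{d-1})$ the one-dimensional square function of the fibre $x_d\mapsto f(x_*,x_d)$ is pointwise dominated by $Sf(\bsx)$. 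Writing $f=\sum_{\bsj,\bsm}c_{\bsj,\bsm}h_{\bsj,\bsm}$, the $(j_d,m_d)$-entry of that fibre square function is $\bigl|\sum_{\bsj_*,\bsm_*}c_{\bsj,\bsm}h_{\bsj_*,\bsm_*}(x_*)\bigr|^2$, whereas the corresponding contribution to $Sf(\bsx)^2$ is $\sum_{\bsj_*,\bsm_*}|c_{\bsj,\bsm}|^2\chi_{I_{\bsj_*,\bsm_*}}(x_*)$. Since the supports of the $h_{\bsj_*,\bsm_*}$ for \emph{different} levels $\bsj_*$ overlap, the former can exceed the latter by a factor comparable to the number of admissible $\bsj_*$ with $|\bsj_*|=n-j_d$, which is polynomial in $n$ for $d\ge 3$. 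So the pointwise domination fails, and with it the fibrewise one-parameter CWW step. You then acknowledge the remaining assembly problem and defer to ``essentially the route of \cite{BM15}'' --- which is circular here, since the proposition's proof in the paper \emph{is} the citation to \cite{BM15}.

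The coordinate split should go the other way around. Under the constraint $|\bsj|=n$, the level $j_d=n-|\bsj_*|$ is determined by $\bsj_*$, so for each fixed $x_d$ the function $f(\cdot,x_d)$ is a genuine $(d-1)$-parameter Haar sum in $x_*$ with coefficients $g_{\bsj_*,\bsm_*}(x_d)=\sum_{m_d}c_{\bsj,\bsm}h_{j_d,m_d}(x_d)$. Because the $h_{j_d,m_d}$ for fixed $j_d$ and varying $m_d$ have \emph{disjoint} supports, $|g_{\bsj_*,\bsm_*}(x_d)|^2=\sum_{m_d}|c_{\bsj,\bsm}|^2\chi_{I_{j_d,m_d}}(x_d)$, and hence the $(d-1)$-parameter square function of $f(\cdot,x_d)$ coincides pointwise with $Sf(\bsx)$; this is exactly where the hyperbolic assumption enters. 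The $(d-1)$-dimensional version of Lemma~\ref{lem_lp} then gives $\|f(\cdot,x_d)|L_p([0,1)^{d-1})\|\ll p^{(d-1)/2}\|Sf|L_\infty\|$ uniformly in $x_d$; raising to the $p$-th power, integrating in $x_d$, and invoking Proposition~\ref{prp_exp_Lp} finishes the proof. As written, your proposal contains neither this nor any other complete argument for the crucial $p^{(d-1)/2}$ estimate, so it remains a plan rather than a proof.
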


Below we use the Hyperbolic Chang-Wilson-Wolff inequality to obtain bounds on the $\exp(L^{2/(d-1)})$-norm of the discrepancy function. To obtain the general result of Theorem~\ref{thm:exporlicz} we need to interpolate between this result and the $L_\infty$-norm of the discrepancy function. Concretely, we use the following result \cite[Proposition~2.4]{BM15}, where also a short proof can be found.

\begin{prp}\label{prp_exp_int_inf}
Let $0 < \alpha < \beta < \infty$. Consider a function $f \in L_\infty([0,1)^d)$. If $f \in \exp(L^\alpha)$, then also $f \in \exp(L^\beta)$ and we have
\begin{equation*}
\| f | \exp(L^\beta) \| \ll \|f | \exp(L^\alpha) \|^{\alpha/\beta} \| f | L_{\infty}([0,1)^d) \|^{1-\alpha/\beta}.
\end{equation*}
\end{prp}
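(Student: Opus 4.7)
The plan is to work directly from the definition of the Orlicz norm and derive the interpolation inequality from a pointwise comparison between $\psi_\beta(|f|/K)$ and $\psi_\alpha(|f|/A)$ that is driven entirely by the uniform bound $|f| \le M$. Write $A = \|f|\exp(L^\alpha)\|$ and $M = \|f|L_\infty([0,1)^d)\|$; the cases $A = 0$ or $M = 0$ are trivial, and if $A = \infty$ or $M = \infty$ the estimate is vacuous, so assume $0 < A, M < \infty$. I will set $K := A^{\alpha/\beta} M^{1-\alpha/\beta}$ and show $\|f|\exp(L^\beta)\| \le C(\alpha,\beta)\, K$.

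The key elementary observation is the pointwise identity
$(|f|/K)^\beta = K^{-\beta} |f|^\alpha |f|^{\beta-\alpha}$, which together with $|f| \le M$ and the choice $K^\beta = A^\alpha M^{\beta-\alpha}$ yields
\[
(|f(\bsx)|/K)^\beta \;\le\; K^{-\beta} A^\alpha M^{\beta-\alpha} (|f(\bsx)|/A)^\alpha \;=\; (|f(\bsx)|/A)^\alpha
\]
for every $\bsx \in [0,1)^d$. Hence $\exp((|f|/K)^\beta) - 1 \le \exp((|f|/A)^\alpha) - 1$ pointwise. This is the heart of the argument: the algebraic splitting $|f|^\beta = |f|^\alpha |f|^{\beta-\alpha}$ converts an $L_\infty$-bound on the excess exponent $\beta - \alpha$ into an interpolation of the two Orlicz norms.

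Next I would translate this comparison to the genuine Orlicz functions $\psi_\alpha, \psi_\beta$. By hypothesis each $\psi_\gamma$ is convex, nondecreasing, vanishes at $0$, and coincides with $\exp(x^\gamma) - 1$ once $x$ exceeds some threshold $x_0 = x_0(\alpha,\beta)$; in the interval $[0,x_0]$ both functions are uniformly bounded by some $C_0 = C_0(\alpha,\beta)$. Splitting by whether $|f|/K \le x_0$ (where $\psi_\beta(|f|/K) \le C_0$) or $|f|/K > x_0$ (where the previous pointwise comparison applies, up to an additive constant of $\exp(x_0^\alpha) - 1$ in the sub-case $|f|/A \le x_0$), one obtains a universal inequality
\[
\psi_\beta(|f(\bsx)|/K) \;\le\; \psi_\alpha(|f(\bsx)|/A) + C_1
\]
with $C_1 = C_1(\alpha,\beta)$. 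Integrating and using $\mathbb{E}\,\psi_\alpha(|f|/A) \le 1$ gives $\mathbb{E}\,\psi_\beta(|f|/K) \le 1 + C_1$.

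To finish, a standard rescaling based on convexity converts this into the desired Orlicz bound: since $\psi_\beta$ is convex with $\psi_\beta(0) = 0$, the quotient $\psi_\beta(x)/x$ is nondecreasing, so $\psi_\beta(x/t) \le t^{-1} \psi_\beta(x)$ for every $t \ge 1$. Choosing $t = 1 + C_1$ and $K' = tK$ yields $\mathbb{E}\,\psi_\beta(|f|/K') \le 1$, hence $\|f|\exp(L^\beta)\| \le K' = (1+C_1)\, A^{\alpha/\beta} M^{1-\alpha/\beta}$. I expect the main (and only) technical annoyance to be the small-argument regime, where the naive formula $\exp(x^\gamma) - 1$ is not convex for $\gamma < 1$ and must be replaced by its convex extension; this is harmless because the replacement only affects a bounded region and is absorbed into the additive constant $C_1$.
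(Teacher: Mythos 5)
Your argument is correct, but it is not the route the paper takes: the paper simply imports this statement from \cite[Proposition~2.4]{BM15}, and the ``short proof'' referred to there runs through the $L_p$-characterization of Proposition~\ref{prp_exp_Lp}, writing $p^{-1/\beta}\|f|L_p\| = \big(p^{-1/\alpha}\|f|L_p\|\big)^{\alpha/\beta}\,\|f|L_p\|^{1-\alpha/\beta} \le \big(p^{-1/\alpha}\|f|L_p\|\big)^{\alpha/\beta}\,\|f|L_\infty\|^{1-\alpha/\beta}$ and taking the supremum over $p>1$. You instead work directly with the Orlicz modular: the pointwise identity $(|f|/K)^\beta = K^{-\beta}|f|^\alpha|f|^{\beta-\alpha}$ together with $|f|\le M$ a.e.\ and the choice $K^\beta = A^\alpha M^{\beta-\alpha}$ gives $(|f|/K)^\beta \le (|f|/A)^\alpha$ pointwise, and the rest is bookkeeping for the convexified $\psi_\gamma$ near the origin plus the standard rescaling $\psi_\beta(x/t)\le t^{-1}\psi_\beta(x)$ for $t\ge 1$. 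Both are sound; the paper's route is a one-liner once Proposition~\ref{prp_exp_Lp} is available (and that proposition is needed elsewhere anyway), but it hides Stirling-type estimates and an equivalence constant inside $\asymp$. Your route is self-contained and elementary, and when $\alpha\ge 1$ (so that $\psi_\alpha$ and $\psi_\beta$ are exactly $\exp(x^\gamma)-1$) it even yields the inequality with constant $1$, since then $\mathbb{E}\,\psi_\beta(|f|/K)\le \mathbb{E}\,\psi_\alpha(|f|/A)\le 1$ directly. Two small points worth making explicit in a final write-up: (i) the fact that $\mathbb{E}\,\psi_\alpha(|f|/A)\le 1$ at the infimum $A$ follows by monotone convergence along $K\downarrow A$ (or one can run the whole argument with $A+\varepsilon$ and let $\varepsilon\to 0$); (ii) the pointwise bound $|f|\le M$ only holds almost everywhere, which is of course enough for the expectations involved.
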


\section{Besov and Triebel-Lizorkin spaces with dominating mixed smoothness}

We are also interested in Besov spaces and Triebel-Lizorkin spaces with dominating mixed smoothness and their quasi-norm of the discrepancy function. The discrepancy of finite point sets in Besov spaces and Triebel-Lizorkin spaces and the corresponding integration orders where considered in \cite{H10,M13a,M13b,M13c,M15,T10,T10a}. 

We follow \cite{T10}. Let $\mathfrak{S}(\R^d)$ denote the Schwartz space and $\mathfrak{S}'(\R^d)$ the space of tempered distributions on $\R^d$. For $\varphi\in\mathfrak{S}(\R^d)$ we denote by $\mathcal{F}\varphi$ the Fourier transform of $\varphi$ and extend it to $\mathfrak{S}'(\R^d)$ in the usual way. For $f\in \mathfrak{S}'(\R^d)$ the Fourier transform is given as $\mathcal{F} f(\varphi) = f(\mathcal{F}\varphi),\; \varphi\in\mathfrak{S}(\R^d)$. Analogously we proceed with the inverse Fourier transform $\mathcal{F}^{-1}$.

Let $\varphi_0 \in \mathfrak{S}(\R)$ satisfy $\varphi_0(x) = 1$ for $|x| \leq 1$ and $\varphi_0(x) = 0$ for $|x| > \frac{3}{2}$. Let $\varphi_k(x) = \varphi_0(2^{-k} x) - \varphi_0(2^{-k + 1} x)$ where $x \in \R, \, k \in \N$ and $\varphi_{\bsk}(\bsx) = \varphi_{k_1}(x_1) \cdots \varphi_{k_d}(x_d)$ where $\bsk = (k_1,\ldots,k_d) \in \N_0^d$ and $\bsx = (x_1,\ldots,x_d) \in \R^d$. The set of functions $\{\varphi_{\bsk}\}$ are a dyadic resolution of unity since
\[ \sum_{{\bsk} \in \N_0^d} \varphi_{\bsk}(\bsx) = 1 \]
for all $\bsx \in \R^d$. The functions $\mathcal{F}^{-1}(\varphi_{\bsk} \mathcal{F} f)$ are entire analytic functions for every $f \in \mathfrak{S}'(\R^d)$.

Let $0 < p,q \leq \infty$ and $s \in \R$. The Besov space with dominating mixed smoothness $S_{p,q}^s B(\R^d)$ consists of all $f \in \mathfrak{S}'(\R^d)$ with finite quasi-norm
\begin{align}
\| f | S_{p,q}^s B(\R^d) \| = \left( \sum_{{\bsk} \in \N_0^d} 2^{s |\bsk| q} \| \mathcal{F}^{-1}(\varphi_{\bsk} \mathcal{F} f) | L_p(\R^d) \|^q \right)^{\frac{1}{q}}
\end{align}
with the usual modification if $q = \infty$.

Let $\mathfrak{D}([0,1)^d)$ consist of all complex-valued infinitely differentiable functions on $\R^d$ with compact support in the interior of $[0,1)^d$ and let $\mathfrak{D}'([0,1)^d)$ be its dual space of all distributions in $[0,1)^d$. The Besov space with dominating mixed smoothness $S_{p,q}^s B([0,1)^d)$ consists of all $f \in \mathfrak{D}'([0,1)^d)$ with finite quasi-norm
\begin{align}
\| f | S_{p,q}^s B([0,1)^d) \| = \inf \left\{ \| g | S_{p,q}^s B(\R^d) \| : \: g \in S_{p,q}^s B(\R^d), \: g|_{[0,1)^d} = f \right\}.
\end{align}

\begin{thm}\label{thm:besov}
Suppose that $1 \le p,q < \infty$ and $d \in \mathbb{N}$.
\begin{enumerate}
 \item[(i)] For every infinite sequence $\S_d$ in $[0,1)^d$, 
\begin{equation}\label{besov:lbd1}
\|D_{\S_d}^N | S_{p,q}^0 B([0,1)^d)\| \gg_{p,q,d} \,N^{-1} (\log N)^{\frac{d}{q}}   \ \ \ \mbox{ for infinitely many $N\in\N$.}  
\end{equation}
Furthermore, there exists an infinite sequence $\S_d$ in $[0,1)^d$ such that
\begin{equation}\label{besov:ubd1}
\|D_{\S_d}^N | S_{p,q}^0 B([0,1)^d)\| \ll_{p,q,d} \,N^{-1} (\log N)^{\frac{d}{q}}  \ \ \ \mbox{ for all $N\in\N\setminus\{1\}$.}  
\end{equation}
 \item[(ii)] Suppose further that $0 < s < 1/p$. For every infinite sequence $\S_d$ in $[0,1)^d$, 
\begin{equation}\label{besov:lbd2}
\|D_{\S_d}^N | S_{p,q}^s B([0,1)^d)\| \gg_{p,q,s,d} \,N^{s-1} (\log N)^{\frac{d-1}{q}}  \ \ \ \mbox{ for infinitely many $N\in\N$.} 
\end{equation}
Furthermore, there exists an infinite sequence $\S_d$ in $[0,1)^d$ such that
\begin{equation}\label{besov:ubd2} 
\|D_{\S_d}^N | S_{p,q}^s B([0,1)^d)\| \ll_{p,q,s,d} \,N^{s-1} (\log N)^{\frac{d-1}{q}}  \ \ \ \mbox{ for all $N\in\N\setminus\{1\}$.}  
\end{equation}
\end{enumerate}
\end{thm}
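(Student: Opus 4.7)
The plan is to reduce everything to Haar coefficients of $D_{\S_d}^N$ and then invoke the constructions and coefficient bounds established for the companion $L_p$ paper \cite{DHMP16} (Lemma~\ref{le131416}). The key analytic tool is an equivalent quasi-norm for dominating-mixed Besov spaces with $1\le p,q<\infty$ and $0\le s<1/p$, namely
\[
\|f\,|\,S_{p,q}^s B([0,1)^d)\|^{\ast}=\left(\sum_{\bsj\in\N_{-1}^d}2^{|\bsj|(s-1/p+1)q}\left(\sum_{\bsm\in\D_{\bsj}}|\langle f,h_{\bsj,\bsm}\rangle|^p\right)^{q/p}\right)^{1/q},
\]
which is equivalent to the Fourier-analytic quasi-norm in this parameter range; see Triebel \cite{T10} and Markhasin \cite{M13a}. (A quick sanity check on $f=h_{\bsj,\bsm}$ confirms the exponent $s-1/p+1$, since $\|h_{\bsj,\bsm}\,|\,S_{p,q}^s B\|\asymp 2^{|\bsj|(s-1/p)}$ while $\langle h_{\bsj,\bsm},h_{\bsj,\bsm}\rangle=2^{-|\bsj|}$.) After this reduction the Besov discrepancy is a weighted $\ell_q(\ell_p)$ norm of the Haar coefficients of $D_{\S_d}^N$.

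For the upper bounds \eqref{besov:ubd1} and \eqref{besov:ubd2} I would take $\S_d$ to be the explicit second order digital sequence from \cite{DHMP16}. The Haar coefficients of $D_{\S_d}^N$ split naturally into three regimes depending on how $|\bsj|$ compares with $\log_2 N$: a large-scale regime $|\bsj|\le\log_2 N$ in which $|\langle D_{\S_d}^N,h_{\bsj,\bsm}\rangle|\ll 2^{-|\bsj|}N^{-1}$ up to polylogarithmic factors, a transition scale $|\bsj|\asymp\log_2 N$, and a small-scale regime $|\bsj|>\log_2 N$ where the second-order property yields the improved decay $2^{-2|\bsj|}$. These pointwise estimates are precisely Lemma~\ref{le131416}. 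Substituting them into the Haar characterization, the weight $2^{|\bsj|(s-1/p+1)q}$ combined with $\sum_{\bsm}|\langle D_{\S_d}^N,h_{\bsj,\bsm}\rangle|^p\ll 2^{|\bsj|(1-p)}N^{-p}$ produces a factor proportional to $2^{|\bsj|sq}$. For $s=0$ all levels $|\bsj|\le\log_2 N$ contribute on equal footing; together with the combinatorial count $\#\{\bsj\in\N_{-1}^d:|\bsj|=n\}\asymp n^{d-1}$ this yields $(\log N)^d\cdot N^{-q}$ and hence the target $N^{-1}(\log N)^{d/q}$. For $0<s<1/p$ the weight $2^{|\bsj|sq}$ forces the sum to be dominated by the top level $|\bsj|\asymp\log_2 N$, yielding $N^{s-1}(\log N)^{(d-1)/q}$; the condition $s<1/p$ ensures geometric convergence in the small-scale regime via the $2^{-2|\bsj|}$ bound.

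The lower bounds split into two cases. For $0<s<1/p$ the target \eqref{besov:lbd2} already matches the finite-set lower bounds of Triebel \cite{T10a} and Markhasin \cite{M13a}; applying these directly to the initial $N$ elements $\P_{N,d}$ of $\S_d$ gives \eqref{besov:lbd2} for every $N\ge 2$. For $s=0$ the finite-set bound only carries exponent $(d-1)/q$, so an extra $(\log N)^{1/q}$ must be produced by averaging over $N$ in the spirit of Proinov \cite{P86} (see also \cite{DP14b} and the $L_p$ lower bound strategy of \cite{DHMP16}). Concretely, one would assume $\|D_{\S_d}^N\,|\,S_{p,q}^0 B\|\le\varepsilon(N) N^{-1}(\log N)^{d/q}$ with $\varepsilon(N)\to 0$ along a subsequence, lift the sequence to the $(d+1)$-dimensional finite point set $\{(\bsx_n,n/(2M)):0\le n<2M\}$ for a suitable $M$, and express the Haar coefficients of the lifted $(d+1)$-dimensional discrepancy function in terms of those of the initial-segment discrepancies of $\S_d$; the finite-set Besov lower bound in dimension $d+1$, with exponent $d/q$, then forces $\varepsilon(N)\not\to 0$.

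The main obstacle will be executing this Proinov-type lifting inside the Besov quasi-norm rather than in an $L_p$ norm. Unlike the $L_p$ case, where Fubini decouples the new coordinate cleanly, the weighted $\ell_q(\ell_p)$ structure with $q\ne p$ makes it non-trivial to pass from the $(d+1)$-dimensional Haar sum back to an averaged $d$-dimensional expression; controlling the interaction between Haar scales in the extra coordinate around $\log_2 M$ and the original $d$ coordinates is where the bulk of the technical work will go. The remaining steps—the upper bounds and the $s>0$ lower bound—reduce to bookkeeping on top of \cite{DHMP16} and the known finite-set results of \cite{T10,T10a,M13a,M13b,M13c,M15}.
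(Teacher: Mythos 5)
Your upper-bound argument and your $s>0$ lower bound follow the paper's route: the Haar characterization of $\|\cdot\,|\,S_{p,q}^sB\|$ (Proposition~\ref{haarbesovnorm}) combined with the coefficient bounds of Lemma~\ref{le131416}, summed scale by scale, gives \eqref{besov:ubd1} and \eqref{besov:ubd2} exactly as you describe, and for $0<s<1/p$ the finite-point-set bound $\|D_{\P_{N,d}}\,|\,S_{p,q}^sB\|\gg N^{s-1}(\log N)^{(d-1)/q}$ applied to initial segments is precisely what the paper uses (it reproves it in Lemma~\ref{lemsge0} via Roth's empty-box argument, but the content is the same, and both versions give the bound for \emph{every} $N$).

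The genuine gap is the one you flag yourself and then leave unresolved: the Proinov lifting for $s=0$. Identifying the strategy (lift to $\bsx_k=(\bsy_k,k/N)$ in $[0,1)^{d+1}$ and invoke the $(d+1)$-dimensional finite-set bound with exponent $d/q$) is not enough; the whole argument hinges on dominating the $(d+1)$-dimensional Besov norm of the lifted discrepancy function by the $d$-dimensional norms of the initial segments, and this is exactly the step you say you do not know how to execute when $q\ne p$. The paper closes it with Lemma~\ref{besovsup}: for $f\in S_{p,q}^0B([0,1]^{d+1})$ one has $\|f\,|\,S_{p,q}^0B([0,1]^{d+1})\|\le\sup_{t_{d+1}}\|f(\cdot,t_{d+1})\,|\,S_{p,q}^0B([0,1]^d)\|$. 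The point is that the $(d+1)$-dimensional Haar coefficient factorizes as an iterated inner product, $\langle f,h_{\bsj,\bsm}\rangle_{d+1}=\langle\langle f,h_{\bsj_*,\bsm_*}\rangle_d,h_{j_{d+1},m_{d+1}}\rangle_1$, which is bounded by $2^{-\max(j_{d+1},0)}\sup_{t_{d+1}}|\langle f(\cdot,t_{d+1}),h_{\bsj_*,\bsm_*}\rangle_d|$; the factor $2^{-\max(j_{d+1},0)}$ together with the count $\#\D_{j_{d+1}}=2^{\max(j_{d+1},0)}$ and the $s=0$ weight $2^{|\bsj|(1-1/p)q}$ is what makes the extra coordinate harmless in the $\ell_q(\ell_p)$ bookkeeping. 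With this lemma in hand, the decomposition $ND_{\P_{N,d+1}}(\bst)=mD_{\P_{m,d}}(\bst_*)+\varphi(\bst)$ with $|\varphi|\le1$ (identity \eqref{formle1}) yields Lemma~\ref{proinovbesov}, and the passage from ``for every $N$ there is an $n\le N$'' to ``for infinitely many $n$'' is the same maximality argument as in Lemma~\ref{lbdDpts} (your contradiction with $\varepsilon(N)\to0$ would serve equally well). Without a statement and proof of something like Lemma~\ref{besovsup}, your $s=0$ lower bound is not a proof but a plan.
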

In the case $d=1$ this result has been recently shown in \cite[Theorem~1]{K15} which is based on the symmetrized van der Corput sequence.\\

Let $0 < p < \infty$, $0 < q \leq \infty$ and $s \in \R$. The Triebel-Lizorkin space with dominating mixed smoothness $S_{p,q}^s F(\R^d)$ consists of all $f \in \mathfrak{S}'(\R^d)$ with finite quasi-norm
\begin{align}
\| f | S_{p,q}^s F(\R^d) \| = \left\| \left( \sum_{{\bsk} \in \N_0^d} 2^{s |\bsk| q} | \mathcal{F}^{-1}(\varphi_{\bsk} \mathcal{F} f)(\cdot) |^q \right)^{\frac{1}{q}} \Bigg| L_p(\R^d) \right\|
\end{align}
with the usual modification if $q = \infty$.

The Triebel-Lizorkin space with dominating mixed smoothness $S_{p,q}^s F([0,1)^d)$ consists of all $f \in \mathfrak{D}'([0,1)^d)$ with finite quasi-norm
\begin{align}
\| f | S_{p,q}^s F([0,1)^d) \| = \inf \left\{ \| g | S_{p,q}^s F(\R^d) \| : \: g \in S_{p,q}^s F(\R^d), \: g|_{[0,1)^d} = f \right\}.
\end{align}

\begin{thm}\label{thm:triebel}
Suppose that $1 \le p,q < \infty$ and $d \in \mathbb{N}$.
\begin{enumerate}
 \item[(i)] For every infinite sequence $\S_d$ in $[0,1)^d$, 
\begin{equation}\label{triebel:lbd1}
\|D_{\S_d}^N | S_{p,q}^0 F([0,1)^d)\| \gg_{p,q,d} \,N^{-1} (\log N)^{\frac{d}{q}}   \ \ \ \mbox{ for infinitely many $N\in\N$.}  
\end{equation}
Furthermore, there exists an infinite sequence $\S_d$ in $[0,1)^d$ such that
\begin{equation}\label{triebel:ubd1}
\|D_{\S_d}^N | S_{p,q}^0 F([0,1)^d)\| \ll_{p,q,d} \,N^{-1} (\log N)^{\frac{d}{q}} \ \ \ \mbox{ for all $N\in\N\setminus\{1\}$.}  
\end{equation}
 \item[(ii)] Suppose further that $0<s<1/\min(p,q)$. For every infinite sequence $\S_d$ in $[0,1)^d$, 
\begin{equation}\label{triebel:lbd2}
\|D_{\S_d}^N | S_{p,q}^s F([0,1)^d)\| \gg_{p,q,s,d}  \,N^{s-1} (\log N)^{\frac{d-1}{q}} \ \ \ \mbox{ for infinitely many $N\in\N$.} 
\end{equation}
Furthermore, there exists an infinite sequence $\S_d$ in $[0,1)^d$ such that
\begin{equation}\label{triebel:ubd2} 
\|D_{\S_d}^N | S_{p,q}^s F([0,1)^d)\| \ge c_{p,q,s,d}  \,N^{s-1} (\log N)^{\frac{d-1}{q}}   \ \ \ \mbox{ for all $N\in\N\setminus\{1\}$.}  
\end{equation}
\end{enumerate}
\end{thm}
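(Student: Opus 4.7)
The plan is to reduce both the upper and lower bounds to statements about the Haar coefficients of the discrepancy function. For $1\le p,q<\infty$ and $0\le s<1/\min(p,q)$ the dyadic Haar system is an unconditional basis of $S_{p,q}^s F([0,1)^d)$, and the quasi-norm is equivalent to a weighted $L_p(\ell_q)$ expression of the form
\begin{equation*}
\|f | S_{p,q}^s F([0,1)^d)\| \asymp \left\|\left(\sum_{\bsj\in\N_{-1}^d} 2^{(s+1)|\bsj|q}\sum_{\bsm\in\D_{\bsj}} |\langle f,h_{\bsj,\bsm}\rangle|^q\chi_{I_{\bsj,\bsm}}\right)^{1/q}\Big|L_p([0,1)^d)\right\|,
\end{equation*}
which is standard in the dominating-mixed-smoothness theory and is exactly what forces the restriction $s<1/\min(p,q)$ in part (ii).

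For the upper bounds \eqref{triebel:ubd1} and \eqref{triebel:ubd2}, I take $\S_d$ to be one of the order 2 digital sequences constructed in \cite{DHMP16}, whose Haar coefficients are controlled by Lemma~\ref{le131416}: at each level $|\bsj|=n$ only $\ll 2^n n^{d-1}$ coefficients are non-zero, and these are $\ll 2^{-2n}$ for $n\ge\log_2 N$ and $\ll N^{-1}2^{-n}$ for $n<\log_2 N$ (with slight modifications in boundary cases). Plugging these estimates into the Haar characterization and performing the $L_p(\ell_q)$ summation, the dominant contribution concentrates at the transition scale $n\asymp\log_2 N$; in part (i) the low frequencies $n<\log_2 N$ additionally contribute a logarithmic factor via the unweighted $\ell_q$-sum, while in part (ii) the smoothness weight $2^{s|\bsj|q}$ kills the low-frequency tail and leaves only the boundary scale. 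Counting $\bsj$'s with $|\bsj|=n$ (there are $\asymp n^{d-1}$) then produces the claimed rates $N^{-1}(\log N)^{d/q}$ and $N^{s-1}(\log N)^{(d-1)/q}$ respectively.

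For the lower bounds \eqref{triebel:lbd1} and \eqref{triebel:lbd2}, I use a Roth-type argument combined with a Proinov-style pigeonhole over $N$. Given an arbitrary infinite sequence $\S_d$ and a sufficiently large integer $n$, one can choose $N\le 2^n$ such that a positive proportion of the $\asymp 2^n n^{d-1}$ dyadic boxes on level $n$ contain none of the first $N$ points of $\S_d$; on each such empty box the Haar coefficient $|\langle D_{\S_d}^N,h_{\bsj,\bsm}\rangle|$ is of exact order $2^{-2n}$. Substituting these lower bounds into the Haar characterization, at a typical $\bsx$ that lies in an empty box for a positive proportion of the $n^{d-1}$ multi-indices $\bsj$ with $|\bsj|=n$, the inner $\ell_q^q$-sum is bounded below by $\gg n^{d-1} 2^{n(s-1)q}$, which after $L_p$-integration and the $1/q$-root yields part (ii). For part (i), I repeat the argument across a dyadic range of scales $n\asymp\log_2 N,\ldots,c\log_2 N$ and apply a Proinov-type pigeonhole over $N$ to pick up the missing $(\log N)^{1/q}$ factor, mirroring the passage from Roth's finite-point-set bound \eqref{lbdlpdipts} to Proinov's sequence bound \eqref{lbdlpdiseq} in the $L_p$ case.

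The main obstacle will be the $L_p(\ell_q)$-coupling in the Haar characterization: unlike the Besov analogue in Theorem~\ref{thm:besov}, where summation and integration commute, here the interaction between the characteristic functions $\chi_{I_{\bsj,\bsm}}$ of dyadic boxes with different aspect ratios and the outer $L_p$-integral must be handled point-by-point. One has to track, for each $\bsx$, how many multi-indices $\bsj$ at level $n$ place $\bsx$ in an empty box, rather than counting empty boxes on average as in the Besov case. This is the technical core of the Triebel-Lizorkin calculation and is the step where the Proinov-type extension to infinite sequences requires genuinely more work than the corresponding finite-point-set argument of Markhasin \cite{M15}.
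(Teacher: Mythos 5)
Your plan is a direct attack on the $F$-norm via its Haar characterization, whereas the paper disposes of Theorem~\ref{thm:triebel} in a few lines by reducing it to Theorem~\ref{thm:besov} through the embeddings of Proposition~\ref{prp_emb_BF}: the upper bounds follow from $S_{\max(p,q),q}^s B\hookrightarrow S_{p,q}^s F$ and the Besov upper bounds with $p$ replaced by $\max(p,q)$, and the lower bounds from $S_{p,q}^s F\hookrightarrow S_{\min(p,q),q}^s B$ and the Besov lower bounds with $p$ replaced by $\min(p,q)$. Your route is viable in principle (it is essentially Markhasin's finite-point-set computation transplanted to sequences), but it contains a genuine gap in the parameter range. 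The Haar characterization of $S_{p,q}^s F$ that you invoke holds for $s<\min(1/p,1/q)$, and $\min(1/p,1/q)=1/\max(p,q)$, \emph{not} $1/\min(p,q)$; you have inverted the min. Part (ii) of the theorem claims the lower bound \eqref{triebel:lbd2} for all $0<s<1/\min(p,q)=\max(1/p,1/q)$, so for, say, $p=1$, $q=2$ your characterization-based argument covers only $s<1/2$ while the statement reaches up to $s<1$. The embedding into $S_{\min(p,q),q}^s B$, whose Haar characterization (Proposition~\ref{haarbesovnorm}) requires only $s<1/\min(p,q)$, is precisely what lets the paper cover the full range for the lower bound; a direct $F$-norm computation cannot.

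The second weak point is the $s=0$ lower bound \eqref{triebel:lbd1}. Summing the Roth-type estimate over a dyadic range of scales $n$ does not produce the extra $(\log N)^{1/q}$: for levels above the critical scale the contribution $2^{|\bsj|q}\cdot 2^{-2|\bsj|q}\cdot|\{\bsj:|\bsj|=n\}|$ decays geometrically, so only the boundary scale survives and you recover $(d-1)/q$, not $d/q$. The missing power must come from the Proinov lifting to dimension $d+1$, and that lifting requires a sup-over-the-last-coordinate inequality for the norm you are working with (the analogue of Lemma~\ref{besovsup}); for the $L_p(\ell_q)$-coupled $F$-quasi-norm the sum over the extra index $(j_{d+1},m_{d+1})$ does not factor out as it does in the Besov case, and you correctly identify this coupling as the technical core but do not resolve it. Both difficulties evaporate if you first pass to the Besov scale by Proposition~\ref{prp_emb_BF} and only then run your (correct) Roth and Proinov arguments there, which is exactly what the paper does. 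As a minor remark, the upper bound obtained by embedding (and, implicitly, by your direct computation as well) holds for $0<s<1/\max(p,q)$, so the range stated in \eqref{triebel:ubd2} is wider than what either proof delivers; this is an issue with the theorem statement rather than with your argument.
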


For $0<p<\infty$ the spaces $S_p^s W([0,1)^d) = S_{p,2}^s F([0,1)^d)$ are called Sobolev spaces with dominating mixed smoothness. Thus in the case $q=2$ the above theorem implies results for Sobolev norms with dominating mixed smoothness.\\

We close this section with some further technical results about Besov- and Triebel-Lizorkin spaces: The spaces $S_{p,q}^s B(\R^d), \, S_{p,q}^s F(\R^d), \, S_{p,q}^s B([0,1)^d)$ and $S_{p,q}^s F([0,1)^d)$ are quasi-Banach spaces. The following results will be helpful. The first one is an embedding result (see \cite{T10} or \cite[Corollary~1.13]{M13c}), and the second one gives a characterization of the Besov spaces in terms of Haar bases (see \cite[Theorem~2.11]{M13c}).
\begin{prp}\label{prp_emb_BF}
Let $p,q\in (0,\infty)$ and $s\in\R$. Then we have
\[ S_{\max(p,q),q}^s B([0,1)^d)\hookrightarrow S_{p,q}^s F([0,1)^d)\hookrightarrow S_{\min(p,q),q}^s B([0,1)^d). \]
\end{prp}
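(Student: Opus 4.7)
The plan is to derive the two embeddings from the Minkowski/Fubini interplay between the definitions of $B$- and $F$-norms, combined with the elementary monotonicity of $L_p$-spaces on the bounded cube $[0,1)^d$.

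First, I would record the basic Minkowski-type inequality: for any sequence of non-negative measurable functions $(g_{\bsk})_{\bsk\in\N_0^d}$ and any $0<p,q<\infty$, one has
\[
\Bigl\|\Bigl(\sum_{\bsk\in\N_0^d} g_{\bsk}^q\Bigr)^{1/q}\Big|L_p(\R^d)\Bigr\|
\le \Bigl(\sum_{\bsk\in\N_0^d}\|g_{\bsk}|L_p(\R^d)\|^q\Bigr)^{1/q}
\quad \text{if } q\le p,
\]
with the reverse inequality when $q\ge p$. Specializing this to $g_{\bsk}(\bsx)=2^{s|\bsk|}\,|\mathcal{F}^{-1}(\varphi_{\bsk}\mathcal{F}f)(\bsx)|$ immediately yields
\[
\|f|S_{p,q}^s F(\R^d)\|\le \|f|S_{p,q}^s B(\R^d)\| \text{ if } q\le p,
\qquad
\|f|S_{p,q}^s B(\R^d)\|\le \|f|S_{p,q}^s F(\R^d)\| \text{ if } q\ge p.
\]
Taking the infimum over all extensions of $f$ from $[0,1)^d$ to $\R^d$ transfers these inequalities to the corresponding spaces on $[0,1)^d$.

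Second, by Fubini's theorem the two norms coincide when the integrability exponents agree: $S_{r,r}^s B([0,1)^d)=S_{r,r}^s F([0,1)^d)$ for every $r\in(0,\infty)$. Third, since $[0,1)^d$ has finite Lebesgue measure, one has the monotonicity $S_{r,q}^s F([0,1)^d)\hookrightarrow S_{p,q}^s F([0,1)^d)$ whenever $r\ge p$. This last embedding is obtained as follows: given an extension $g\in S_{r,q}^s F(\R^d)$ of $f$, multiply by a smooth cutoff $\psi$ with $\psi\equiv 1$ on $[0,1)^d$ and compact support. The product $\psi g$ is still an extension of $f$, the pointwise multiplier property of such $\psi$ preserves the $F$-norm up to constants, and on the compact support of $\psi g$ Hölder's inequality lets one pass from the $L_r$-based to the $L_p$-based ambient norm.

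With these three ingredients, both embeddings follow by a two-case analysis. If $p\ge q$, then $\max(p,q)=p$ and $\min(p,q)=q$: the left embedding is the Minkowski estimate above, while the right one comes from
\[
S_{p,q}^s F([0,1)^d)\hookrightarrow S_{q,q}^s F([0,1)^d)=S_{q,q}^s B([0,1)^d)
\]
via the $L_p\hookrightarrow L_q$ monotonicity on $[0,1)^d$. If $p\le q$, then $\max(p,q)=q$ and $\min(p,q)=p$: the left embedding reads
\[
S_{q,q}^s B([0,1)^d)=S_{q,q}^s F([0,1)^d)\hookrightarrow S_{p,q}^s F([0,1)^d)
\]
via $L_q\hookrightarrow L_p$, and the right one is again Minkowski. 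The main technical subtlety is the $L_p$-monotonicity step for the $F$-spaces on $[0,1)^d$, because the cutoff argument must carefully combine a pointwise multiplier theorem for Triebel--Lizorkin spaces (to reduce to compactly supported extensions) with Hölder's inequality applied inside the $L_p$-norm defining the $F$-quasinorm; the Minkowski and Fubini steps by themselves are purely mechanical.
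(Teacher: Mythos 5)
The paper does not actually prove this proposition: it is quoted from the literature (Triebel \cite{T10} and \cite[Corollary~1.13]{M13c}), so there is no internal proof to compare against. Your architecture is nonetheless the standard one behind those references, and most of it is sound: the $\ell_q(L_p)$ versus $L_p(\ell_q)$ comparison (Minkowski's integral inequality in $L_{p/q}$ when $q\le p$, the reverse triangle inequality in $L_{p/q}$ for non-negative functions when $q\ge p$), the identity $S^s_{r,r}B=S^s_{r,r}F$ by Fubini, the transfer to $[0,1)^d$ by taking infima over extensions, and the two-case bookkeeping with $\max(p,q)$ and $\min(p,q)$ are all correct.

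The gap is in the one step you yourself single out: the monotonicity $S^s_{r,q}F([0,1)^d)\hookrightarrow S^s_{p,q}F([0,1)^d)$ for $r\ge p$. After multiplying by the cutoff $\psi$, the function whose $L_p(\R^d)$-norm defines the $F$-quasinorm is the square function $\bigl(\sum_{\bsk}2^{s|\bsk|q}\,|\mathcal{F}^{-1}(\varphi_{\bsk}\mathcal{F}(\psi g))|^q\bigr)^{1/q}$, and the blocks $\mathcal{F}^{-1}(\varphi_{\bsk}\mathcal{F}(\psi g))$ are entire functions of exponential type: they do not vanish outside the support of $\psi g$. Hence ``H\"older on the compact support of $\psi g$'' does not compare $\|\cdot\|_{L_p(\R^d)}$ with $\|\cdot\|_{L_r(\R^d)}$; one must first show that the square function of a compactly supported distribution has rapidly decaying tails away from the support (via Peetre maximal functions or local means), or pass to an atomic, wavelet or Haar characterization in which the building blocks are genuinely localized. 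In the Haar range $1/p-1<s<\min(1/p,1)$ of Proposition~\ref{haarbesovnorm}, for instance, the monotonicity in the integrability index reduces to H\"older's inequality for the finite sums over $\bsm\in\D_{\bsj}$, the factor $2^{|\bsj|(1/p-1/r)}$ being absorbed exactly by the normalization $2^{|\bsj|(s-1/p+1)}$ --- but that covers only part of the stated range $s\in\R$. A further small point: for dominating mixed smoothness the cutoff $\psi$ should be taken as a tensor product of one-dimensional cutoffs so that the pointwise multiplier assertion is the standard one for these anisotropic scales. None of this makes the statement false --- the embedding is a known result and the repair is routine --- but as written the H\"older step does not go through.
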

\begin{prp} \label{haarbesovnorm}
Let $p,q\in(0,\infty)$, $1/p-1<s<\min(1/p,1)$ and $f\in S_{p,q}^sB([0,1)^d)$. Then
\[ \|f|S_{p,q}^sB([0,1)^d)\|^q \asymp_{p,q,s,d} \sum_{\bsj\in\N_{-1}^d} 2^{|\bsj|(s-1/p+1)q}\left(\sum_{\bsm\in\D_{\bsj}} |\langle f,h_{\bsj,\bsm}\rangle|^p\right)^{q/p}. \]
\end{prp}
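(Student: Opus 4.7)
The natural route is through an atomic characterization of $S_{p,q}^sB([0,1)^d)$ in which rescaled Haar functions can play the role of building blocks, once the smoothness $s$ lies in the window where Haar functions are regular enough to belong to the space yet irregular enough to obey no vanishing-moment constraints. I would proceed as follows.

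First I would reduce, as far as possible, to the one-dimensional case. Because the norm of $S_{p,q}^sB(\R^d)$ is built from a tensor-product dyadic resolution of unity $\varphi_{\bsk}(\bsx) = \varphi_{k_1}(x_1)\cdots\varphi_{k_d}(x_d)$ and the Haar system in dimension $d$ is the tensor product of one-dimensional Haar systems, the $d$-dimensional characterization should follow from the one-dimensional one on each axis, combined via Fubini in the inner $\ell^p$-sum over $\bsm\in\D_{\bsj}$ and the outer $\ell^q$-sum over $\bsj\in\N_{-1}^d$. The level-parameter $|\bsj|$ then automatically produces the factor $2^{|\bsj|(s-1/p+1)q}$.

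Second, I would invoke an atomic decomposition of the one-dimensional Besov space $B_{p,q}^s([0,1))$: any $f$ in the space admits a decomposition $f = \sum_{j\ge -1}\sum_{m\in\D_j}\lambda_{j,m}a_{j,m}$, with $a_{j,m}$ an $(s,p)$-atom supported in a fixed dilate of $I_{j,m}$ and normalized so that
\[ \|f|B_{p,q}^s([0,1))\|^q \asymp \inf \sum_{j\ge -1} 2^{j(s-1/p+1)q}\Bigl(\sum_{m\in\D_{j}}|\lambda_{j,m}|^p\Bigr)^{q/p}, \]
the infimum ranging over admissible decompositions. I would then verify that in the range $1/p-1<s<\min(1/p,1)$ the renormalized Haar functions $c\cdot 2^{j(1/p-1-s)}h_{j,m}$ qualify as atoms. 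The upper restriction $s<1/p$ is precisely what allows a (non-continuous) Haar function to be a valid atom, as no moment conditions beyond the built-in cancellation $\int h_{j,m}=0$ ($j\ge 0$) are needed and no additional Hölder regularity is required; the lower restriction $s>1/p-1$ is exactly what is needed for $h_{j,m}\in B_{p,q}^s$, since the jump discontinuity has controllable cost precisely in that range.

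Finally I would combine the atomic characterization with the fact that, in the same parameter window, the Haar system is an unconditional basis of $B_{p,q}^s([0,1))$, so that the particular ``decomposition'' furnished by taking $\lambda_{j,m} = \langle f,h_{j,m}\rangle$ (up to the Haar normalization) is essentially extremal in the atomic infimum above; this gives the two-sided equivalence and, after tensorization, Proposition \ref{haarbesovnorm}.

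I expect the main obstacle to be the two-sided comparison at the boundary of admissibility in Step three: the ``$\gg$'' direction is the easier one, as it reduces to bounding the Besov quasi-norm of an atomic sum by its coefficient norm using atomic estimates; the ``$\ll$'' direction, which says that the Haar coefficients themselves attain (up to constants) the atomic infimum, is more delicate at low regularity. The standard tool here is a local-means or Peetre-maximal-function argument comparing the Haar resolution to the Fourier-analytic resolution $\{\varphi_{\bsk}\}$ defining the Besov quasi-norm. Because this comparison blows up at $s=1/p-1$ and at $s=1/p$, the strict inequalities in the hypothesis are essential and cannot be relaxed by this method.
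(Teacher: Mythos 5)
The paper contains no proof of Proposition~\ref{haarbesovnorm}: the statement is imported as a known result from \cite[Theorem~2.11]{M13c}, which in turn rests on Triebel's Haar-basis and atomic-decomposition machinery for spaces with dominating mixed smoothness \cite{T10}. So there is no in-paper argument to measure your proposal against; what can be said is that your outline does track the strategy of the cited sources (atomic estimates for one inequality, local means and duality for the other). Two points in your sketch deserve a warning, though. First, the reduction to $d=1$ ``via Fubini'' conceals real work: for $p\neq q$ the space $S_{p,q}^sB$ is not a norm-compatible tensor product of one-dimensional Besov spaces, and the published proofs do not apply a one-dimensional theorem as a black box but iterate the one-dimensional estimates coordinate-wise inside the $d$-fold hyperbolic decomposition, keeping the mixed $\ell_q(\ell_p)$ index structure intact throughout. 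Second, your third step is circular as stated: asserting that the coefficients $\langle f,h_{\bsj,\bsm}\rangle$ are ``essentially extremal in the atomic infimum'' because the Haar system is an unconditional basis realizing the stated isomorphism assumes exactly what is to be proven. The non-circular argument for that direction is the one you only gesture at in your closing paragraph: view $2^{|\bsj|}h_{\bsj,\bsm}$ as the kernel of a local mean and invoke the local-means characterization of the quasi-norm. Relatedly, you have the roles of the two endpoint conditions interchanged: $s<1/p$ is what makes the discontinuous function $h_{j,m}$ an admissible building block of $B_{p,q}^s$ (so the synthesis direction works), while $s>1/p-1$ is the dual-side condition controlling the analysis functionals $f\mapsto\langle f,h_{j,m}\rangle$ in the local-means step. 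With these repairs your outline is a faithful sketch of the proof of \cite[Theorem~2.11]{M13c}, but as written it is a roadmap rather than a proof.
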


\section{Upper bounds}\label{sec2}

The explicit constructions in Theorems~\ref{thm:bmo}, \ref{thm:exporlicz},  \ref{thm:besov} and \ref{thm:triebel} are all the same and are based on linear algebra over the finite field $\mathbb{F}_2$. In the subsequent section we provide a detailed introduction to the infinite sequences which lead to the optimal discrepancy bounds.

\subsection{Digital nets and sequences}

Niederreiter introduced the concepts of digital nets and sequences~\cite{N87} in 1987. A detailed overview of this topic can be found in the books \cite{DP10,LP14,N92}. Here we restrict ourselves to the dyadic case. Let $\mathbb{F}_2$ be the finite field of order 2 identified with the set $\{0,1\}$ equipped with arithmetic operations modulo 2.

\paragraph{The digital construction scheme.} We begin with the definition of digital nets according to Niederreiter, which we present here in a slightly more general form. For $n,q,d \in \N$ with $q \ge n$ let $C_1,\ldots, C_d \in \mathbb{F}_2^{q \times n}$ be $q \times n$ matrices over $\mathbb{F}_2$. For $k \in \{0,\ldots ,2^n-1\}$ with dyadic expansion $k = k_0 + k_1 2 + \cdots + k_{n-1} 2^{n-1}$, where $k_j \in \{0,1\}$, we define the dyadic digit vector $\vec{k}$ as $\vec{k} = (k_0, k_1, \ldots, k_{n-1})^\top \in \mathbb{F}_2^n$ (the symbol $\top$ means the transpose of a vector or a matrix; hence $\vec{k}$ is a column-vector). Then compute
\begin{equation}\label{matrix_vec_net}
C_j \vec{k} =:(x_{j,k,1}, x_{j,k,2},\ldots,x_{j,k,q})^\top \quad \mbox{for } j = 1,\ldots, d,
\end{equation}
where the matrix vector product is evaluated over $\mathbb{F}_2$, and put
\begin{equation*}
x_{j,k} = x_{j,k,1} 2^{-1} + x_{j,k,2} 2^{-2} + \cdots + x_{j,k,q} 2^{-q} \in [0,1).
\end{equation*}
The $k$-th point $\boldsymbol{x}_k$ of the net $\P_{2^n,d}$ is given by $\boldsymbol{x}_k = (x_{1,k}, \ldots, x_{d,k})$. A net $\P_{2^n,d}$ constructed this way is called a {\it digital net (over $\mathbb{F}_2$) with generating matrices} $C_1,\ldots,C_d$. A digital net consists of $2^n$ elements in $[0,1)^d$.

\iffalse
A variant of digital nets are so-called {\it digitally shifted digital nets}. Let $(\vec{\sigma}_1,\ldots,\vec{\sigma}_d) \in (\mathbb{F}_2^{\mathbb{N}})^d$ with $\vec{\sigma}_j=(\sigma_{j,1},\sigma_{j,2},\ldots)^{\top} \in \mathbb{F}_2^{\mathbb{N}}$ with infinitely many components equal to zero. We replace \eqref{matrix_vec_net} by
 \begin{equation*}\label{matrix_vec_net2}
C_j \vec{k} +\vec{\sigma}_j =:(x_{j,k,1}, x_{j,k,2},x_{j,k,3},\ldots,)^\top \in \mathbb{F}_2^{\mathbb{N}} \quad \mbox{for } j = 1,\ldots, d,
\end{equation*}
and put
\begin{equation*}
x_{j,k} = x_{j,k,1} 2^{-1} + x_{j,k,2} 2^{-2} + x_{j,k,3} 2^{-3} +\cdots  \in [0,1).
\end{equation*}
\fi

Next we recall the definition of digital sequences, which are infinite versions of digital nets. Let $C_1,\ldots, C_d \in \mathbb{F}_2^{\mathbb{N} \times \mathbb{N}}$ be $\mathbb{N} \times \mathbb{N}$ matrices over $\mathbb{F}_2$. For $C_j = (c_{j,k,\ell})_{k, \ell \in \mathbb{N}}$ it is assumed that for each $\ell \in \mathbb{N}$ there exists a $K(\ell) \in \mathbb{N}$ such that $c_{j,k,\ell} = 0$ for all $k > K(\ell)$. For $k \in \N_0$ with dyadic expansion $k = k_0 + k_1 2 + \cdots + k_{m-1} 2^{m-1} \in \mathbb{N}_0$,  define the infinite dyadic digit vector of $k$ by $\vec{k} = (k_0, k_1, \ldots, k_{m-1}, 0, 0, \ldots )^\top \in \mathbb{F}_2^{\mathbb{N}}$. Then compute
\begin{equation}\label{eq_dig_seq}
C_j \vec{k}=:(x_{j,k,1}, x_{j,k,2},\ldots)^\top \quad \mbox{for } j = 1,\ldots, d,
\end{equation}
where the matrix vector product is evaluated over $\mathbb{F}_2$, and put
\begin{equation*}
x_{j,k} = x_{j,k,1} 2^{-1} + x_{j,k,2} 2^{-2} + \cdots \in [0,1).
\end{equation*}
The $k$-th point $\boldsymbol{x}_k$ of the sequence $\S_d$ is given by $\boldsymbol{x}_k = (x_{1,k}, \ldots, x_{d,k})$. We call a sequence $\S_d$ constructed this way a {\it digital sequence (over $\mathbb{F}_2$) with generating matrices} $C_1,\ldots,C_d$. Since $c_{j,k,\ell}=0$ for all $k$ large enough, the numbers $x_{j,k}$ are always dyadic rationals, i.e., have a finite dyadic expansion. 

%The variant of {\it digitally shifted digital sequences} is defined in the same way as was done for digitally shifted digital nets.

\paragraph{Higher order nets and sequences.}%\label{sec_honetssequ}

%We now introduce higher order digital nets and sequences which were constructed explicitly in \cite{D07,D08}. %We state here simplified versions of these definitions which are sufficient for our purpose. 

The choice of the respective generating matrices is crucial for the distribution quality of digital nets and sequences. The following definitions put some restrictions on $C_1,\ldots ,C_d$ with the aim to quantify the quality of equidistribution of the digital net or sequence.

\begin{df}\rm\label{def_net}
Let $n, q, \alpha \in \N$ with $q \ge \alpha n$ and let $t$ be an integer such that $0 \le t \le \alpha n$. Let $C_1,\ldots, C_d \in \mathbb{F}_2^{q \times n}$. Denote the $i$-th row vector of the matrix $C_j$ by $\vec{c}_{j,i} \in \mathbb{F}_2^n$. If for all $1 \le i_{j,\nu_j} < \ldots <
i_{j,1} \le q$ with \[\sum_{j = 1}^d \sum_{l=1}^{\min(\nu_j,\alpha)} i_{j,l}  \le
\alpha n - t\] the vectors
\[\vec{c}_{1,i_{1,\nu_1}}, \ldots, \vec{c}_{1,i_{1,1}}, \ldots,
\vec{c}_{d,i_{d,\nu_d}}, \ldots, \vec{c}_{d,i_{d,1}}\] are linearly independent
over $\mathbb{F}_2$, then the digital net with generating matrices
$C_1,\ldots, C_d$ is called an {\it order $\alpha$ digital $(t,n,d)$-net over $\mathbb{F}_2$}.
\end{df}

%The case $\alpha=1$ corresponds to the classical case of $(t,n,d)$-nets according to \cite{N87}.

Furthermore, we consider digital sequences whose initial segments are order $\alpha$ digital $(t,n,d)$-nets over $\mathbb{F}_2$.

\begin{df}\rm\label{def_seq}
Let $\alpha \in \N$ and let $t \ge 0$ be an integer. Let $C_1,\ldots, C_d \in \mathbb{F}_2^{\mathbb{N} \times \mathbb{N}}$ and let $C_{j, \alpha n \times n}$ denote the left upper $\alpha n \times n$ submatrix of  $C_j$. If for all $n > t/\alpha$ the matrices $C_{1, \alpha n \times n},\ldots, C_{d, \alpha n \times n}$ generate an order $\alpha$ digital $(t,n,d)$-net over $\mathbb{F}_2$, then the digital sequence with generating matrices $C_1,\ldots, C_d$ is called an {\it order $\alpha$ digital $(t,d)$-sequence over $\mathbb{F}_2$}.
\end{df}

%Again, the case $\alpha=1$ corresponds to the classical case of $(t,d)$-sequences according to Niederreiter's definition in \cite{N87}.

Definition~\ref{def_net} shows that if $\P_{2^n,d}$ is an order $\alpha$ digital $(t,n,d)$-net, then for any $t \le t' \le \alpha n$, $\P_{2^n,d}$ is also an order $\alpha$ digital $(t',n,d)$-net. An analogous result also applies to higher order digital sequences.

We point out that a digital net can be an order $\alpha$ digital $(t,n,d)$-net over $\mathbb{F}_2$ and at the same time an order $\alpha'$ digital $(t',n,d)$-net over $\mathbb{F}_2$ for $\alpha'\not = \alpha$. The quality parameter $t$ may depend on $\alpha$ (i.e., $t=t(\alpha)$). The same is true for digital sequences. In particular, \cite[Theorem~4.10]{D08} implies that an order $\alpha$ digital $(t,n,d)$-net is an order $\alpha'$ digital $(t',n,d)$-net for all $1 \le \alpha' \le \alpha$ with
\begin{equation}\label{eq_t_tprime}
t' = \lceil t \alpha'/\alpha \rceil \le t.
\end{equation}
The same result applies to order $\alpha$ digital $(t,d)$-sequences which are also order $\alpha'$ digital $(t',d)$-sequences for $1 \le \alpha' \le \alpha$ and $t'$ as above. In other words, $t(\alpha') = \lceil t(\alpha) \alpha'/\alpha \rceil$ for all $1 \le \alpha' \le \alpha$. For more details consult \cite{D07,D08} or \cite[Chapter~15]{DP10}.

Higher order digital nets and sequences have also a geometrical interpretation. Roughly speaking the definitions imply that special intervals or unions of intervals of prescribed volume contain the correct share of points with respect to a perfect uniform distribution. See \cite{N92,DP10} for the classical case $\alpha=1$ and \cite{DB09}  
%or \cite{DP10,DP14a} 
for general $\alpha$. See also Lemma~\ref{fairint} below.

\paragraph{Explicit constructions of order 2 digital sequences}%\label{sec_exp_constr}

Explicit constructions of order $\alpha$ digital nets and sequences have been provided by Dick~\cite{D07,D08}. For our purposes it suffices to consider only $\alpha=2$. 

We start with generating matrices $C_1, \ldots, C_{2 d}$ of a digital net or sequence. Let $\vec{c}_{j,k}$ denote the $k$-th row of $C_j$. We now define matrices $E_1,\ldots, E_d$, where the $k$-th row of $E_j$ is given by $\vec{e}_{j,k}$, in the following way. For all $j \in \{1,2,\ldots,d\}$, $u \in \N_0$ and $v \in \{1,2\}$ let
\begin{equation*}
\vec{e}_{j,2 u + v} = \vec{c}_{2 (j-1) + v, u+1}.
\end{equation*}
We illustrate the construction for $d=1$. In this case we have
\[C_1=\left(\begin{array}{c} 
             \vec{c}_{1,1}\\
             \vec{c}_{1,2}\\
             \vdots 
            \end{array}\right), \ C_2=\left(\begin{array}{c} 
             \vec{c}_{2,1}\\
             \vec{c}_{2,2}\\
             \vdots 
            \end{array}\right) \ \Rightarrow \ 
           E_1=\left(\begin{array}{c} 
             \vec{c}_{1,1}\\
             \vec{c}_{2,1}\\
             \vec{c}_{1,2}\\
             \vec{c}_{2,2}\\
             \vdots 
            \end{array}\right).\]
This procedure is called \emph{interlacing} (in this case the so-called interlacing factor is $2$).

Recall that above we assumed that $c_{j,k,\ell}=0$ for all $k > K(\ell)$. Let $E_j = (e_{j,k,\ell})_{k, \ell \in \mathbb{N}}$. Then the construction yields that $e_{j,k,\ell} = 0$ for all $k > 2 K(\ell)$. 

From \cite[Theorem~4.11 and Theorem~4.12]{D07} we obtain the following result.
\begin{prp}
If $C_1,\ldots,C_{2d} \in\mathbb{F}_2^{\N \times \N}$ generate an order 1 digital $(t',2d)$-sequence over $\mathbb{F}_2$, then $E_1,\ldots,E_d \in\mathbb{F}_2^{\N \times \N}$ generate an order 2 digital $(t,d)$-sequence over $\mathbb{F}_2$ with \[t = 2 t' + d.\]
\end{prp}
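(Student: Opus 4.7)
The plan is to verify the defining property of an order 2 digital $(t,d)$-sequence for the matrices $E_1,\ldots,E_d$ directly from Definition~\ref{def_seq}, reducing every linear independence statement to the assumed order 1 property of $C_1,\ldots,C_{2d}$ via the interlacing identity
\[
\vec{e}_{j,2u+v} = \vec{c}_{2(j-1)+v,\,u+1}, \qquad v\in\{1,2\}, \ u \in \N_0.
\]
First I would fix $n > t/2 = t' + d/2$ and any selection of row indices $1\le i_{j,\nu_j}<\cdots<i_{j,1}\le 2n$ for $j=1,\dots,d$ satisfying
\[
\sum_{j=1}^{d}\sum_{l=1}^{\min(\nu_j,2)} i_{j,l} \le 2n - t.
\]
Via the interlacing identity, each chosen row of $E_{j,\,2n\times n}$ is identified with a row of either $C_{2j-1,\,n\times n}$ or $C_{2j,\,n\times n}$ according to the parity of $i_{j,l}$, with row index $\lceil i_{j,l}/2\rceil \le n$. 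Writing $r_j^v$ for the largest such row index occurring in $C_{2(j-1)+v}$ (or $0$ if no such row is chosen), the entire collection of selected rows is now realised as a collection of rows inside the $n\times n$ blocks of $C_1,\dots,C_{2d}$.

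Next I would prove the key inequality
\[
\sum_{j=1}^{d}(r_j^1 + r_j^2) \le n - t'.
\]
A short case analysis on the parities of $i_{j,1}$ and $i_{j,2}$ (using $\lceil x/2\rceil = (x + [x\text{ odd}])/2$) shows that for every $j$ with $\nu_j \ge 2$ one has $r_j^1 + r_j^2 \le (i_{j,1}+i_{j,2}+1)/2$: in the different-parity subcase this is an equality, and in the same-parity subcase the second maximum comes from $i_{j,3}$, yielding the even sharper bound $(i_{j,1}+i_{j,2})/2$. For $\nu_j \in \{0,1\}$ the same reasoning gives $r_j^1+r_j^2 \le (i_{j,1}+1)/2$ or $0$. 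Summing over $j$ and plugging in the hypothesis together with $t = 2t'+d$,
\[
\sum_{j=1}^{d}(r_j^1 + r_j^2) \le \frac{1}{2}\sum_{j=1}^{d}\sum_{l=1}^{\min(\nu_j,2)} i_{j,l} + \frac{d}{2} \le \frac{2n-t}{2} + \frac{d}{2} = n - t'.
\]
Since $n > t/2 > t'$, Definition~\ref{def_seq} applied to the assumed order 1 $(t',2d)$-sequence guarantees that $C_{1,\,n\times n},\dots,C_{2d,\,n\times n}$ form an order 1 digital $(t',n,2d)$-net, and the above inequality is exactly the hypothesis of the order 1 net condition. Consequently, all the rows we selected are linearly independent over $\mathbb{F}_2$, and translating back along the interlacing identity, the original rows of $E_1,\dots,E_d$ are linearly independent. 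This is precisely the order 2 net condition with parameter $t = 2t' + d$, so $E_1,\dots,E_d$ generate an order 2 digital $(t,d)$-sequence.

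The main obstacle is the parity case analysis in the key inequality: one must check that the ceiling-induced loss in passing from $i_{j,l}$ to $\lceil i_{j,l}/2\rceil$ is controlled by at most one half-unit per coordinate $j$, even when both of the top two indices share the same parity. The margin $d/2$ produced by the choice $t = 2t'+d$ is exactly what absorbs this loss, so the bound is tight and no waste occurs in the reduction.
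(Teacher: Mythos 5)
Your argument is correct. Note, however, that the paper does not prove this proposition at all: it is quoted directly from \cite[Theorems~4.11 and 4.12]{D07}, where the statement is established for a general interlacing factor $\alpha$ (and in a more abstract framework). What you have written is a self-contained, elementary verification of the special case $\alpha=2$, and it holds up: the interlacing identity $\vec{e}_{j,2u+v}=\vec{c}_{2(j-1)+v,u+1}$ maps the selected rows of $E_{j,2n\times n}$ injectively onto rows $\lceil i_{j,l}/2\rceil\le n$ of $C_{2j-1,n\times n}$ or $C_{2j,n\times n}$ according to parity, and your key inequality $\sum_{j}(r_j^1+r_j^2)\le\tfrac12\sum_{j}\sum_{l=1}^{\min(\nu_j,2)}i_{j,l}+\tfrac d2\le n-t'$ is exactly what is needed to invoke the order~1 net property at level $n>t'$, which then yields independence of \emph{all} selected rows (not just the top ones), as Definition~\ref{def_net} requires. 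The parity case analysis is sound; the only cosmetic points are that the half-unit loss should be charged only to coordinates with $\nu_j\ge1$ (charging all $d$ coordinates is a harmless overcount), and the phrase about the ``second maximum coming from $i_{j,3}$'' is loose but leads to the correct, even sharper bound in the same-parity subcase. Compared with citing \cite{D07}, your route buys transparency about where the additive $d$ in $t=2t'+d$ originates --- it is precisely the accumulated ceiling loss, one half-unit per active coordinate --- at the cost of not covering general $\alpha$, which the paper does not need anyway.
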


Examples of explicit constructions of suitable generating matrices over $\mathbb{F}_2$ were obtained by Sobol'~\cite{S67}, Niederreiter~\cite{N87,N92}, Niederreiter-Xing~\cite{NX96} and others. An overview is presented in \cite[Chapter~8]{DP10}. Any of these constructions is sufficient for our purpose, however, for completeness, we briefly describe a special case of Tezuka's construction~\cite{T93}, which is a generalization of Sobol's construction~\cite{S67} and Niederreiter's construction~\cite{N87} of the generating matrices. 

We explain how to construct the entries $c_{j,k,\ell} \in \mathbb{F}_2$ of the generator matrices $C_j = (c_{j,k,\ell})_{k,\ell \ge 1}$ for $j=1,2,\ldots,d'$ (for our purpose $d'=2d$). To this end choose the polynomials $p_j \in \mathbb{F}_2[x]$ for $j =1,\ldots,d'$ to be the $j$-th irreducible polynomial in a list of irreducible polynomials over $\mathbb{F}_2$ that is sorted in increasing order according to their degree $e_j = \deg(p_j)$, that is, $e_1 \le e_2 \le \ldots \le e_{d'}$ (the ordering of polynomials with the same degree is irrelevant). %We also put $e_1=\deg(x)=1$. %(We point out that Niederreiter~\cite{N87} uses irreducible polynomials instead of primitive polynomials.)

Let $j \in \{1,\ldots,d'\}$ and $k \in \mathbb{N}$. Take $i-1$ and $z$ to be respectively the main term and remainder when we divide $k-1$ by $e_j$, so that   $k-1  = (i-1) e_j + z$, with $0 \le z < e_j$. Now consider the Laurent series expansion
\begin{equation*}
\frac{x^{e_j-z-1}}{p_j(x)^i} = \sum_{\ell =1}^\infty a_\ell(i,j,z) x^{-\ell} \in \mathbb{F}_2((x^{-1})).
\end{equation*}
For $\ell \in \mathbb{N}$ we set
\begin{equation}\label{def_sob_mat}
c_{j,k,\ell} = a_\ell(i,j,z).
\end{equation}
Every digital sequence with generating matrices $C_j = (c_{j,k,\ell})_{k,\ell \ge 1}$ for $j=1,2,\ldots,d'$ found in this way is a special instance of a Sobol' sequence which in turn is a special instance of so-called generalized Niederreiter sequences (see \cite[(3)]{T93}). Note that in the construction above we always have $c_{j,k,\ell}=0$ for all $k > \ell$. The $t$-value for these sequences is known to be $t = \sum_{j=1}^{d'} (e_j-1)$, see \cite[Chapter~4.5]{N92} for the case of Niederreiter sequences.

\begin{rem}
Let $C_1,\ldots,C_{2d}$ be $\mathbb{N}\times \mathbb{N}$ matrices which are constructed according to Tezuka's method as described above. Let $E_1,\ldots,E_d$ be the generator matrices of the corresponding order 2 digital sequence. Then we always have $e_{j,k,\ell}=0$ for all $k > 2 \ell$, where $e_{j,k,\ell}$ is the entry in row $k$ and column $\ell$ of the matrix $E_j$.  
\end{rem}

We point out that the explicit construction of the order $2$ digital $(t,d)$-sequences $\S_d$ shown above, with generating matrices $E_i=(e_{i,k,\ell})_{k,\ell \ge 1}$ for which $e_{i,k,\ell}=0$ for all $k > 2\ell$ and for all $i\in \{1,2,\ldots,d\}$, does not depend on the parameters $p, q, s$ in Theorems~\ref{thm:exporlicz}, \ref{thm:besov} and \ref{thm:triebel}. Our explicit construction, which is based on Tezuka's construction and the interlacing of the generating matrices, is also extensible in the dimension, i.e., if we have constructed the sequence $\S_d$, we can add one more coordinate to obtain the sequence $\S_{d+1}$. This means that we can define a sequence $\S_{\infty}$ of points in $[0,1)^{\mathbb{N}}$ and obtain the sequence $\S_d$, $d \in \mathbb{N}$, by projecting $\S_{\infty}$ to the first $d$ coordinates (cf. \cite{D14}).

\subsection{Some auxiliary results} 

A connection between higher order digital nets over $\mathbb{F}_2$ and dyadic intervals is given by the following result (see \cite[Lemma 3.1]{DHMP16}).

\begin{lem}\label{fairint}
Let $\P_{2^n,d}$ be an order $\alpha$ digital $(t,n,d)$-net over $\mathbb{F}_2$. Then every dyadic interval of order $n-\lceil t/\alpha\rceil$ contains at most $2^{\lceil t/\alpha\rceil}$ points of $\P_{2^n,d}$.   
\end{lem}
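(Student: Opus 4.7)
My plan is to reduce the statement to the well-known fair-distribution property of classical (order~$1$) digital nets, by invoking the inclusion between orders recorded in \eqref{eq_t_tprime}.

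First, I would apply \eqref{eq_t_tprime} with $\alpha' = 1$: any order $\alpha$ digital $(t,n,d)$-net over $\mathbb{F}_2$ is automatically an order $1$ digital $(t^\ast,n,d)$-net over $\mathbb{F}_2$ with quality parameter $t^\ast = \lceil t/\alpha \rceil$. After this reduction, the lemma becomes the classical assertion that in an order~$1$ digital $(t^\ast,n,d)$-net, every dyadic box of order $n - t^\ast$ contains exactly $2^{t^\ast}$ points.

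For the classical assertion, I would argue directly from Definition~\ref{def_net} with $\alpha = 1$. Fix $\bsj \in \mathbb{N}_0^d$ with $|\bsj| = n - t^\ast$ and $\bsm \in \D_{\bsj}$. Given the digital construction \eqref{matrix_vec_net}, the point $\bsx_k$ lies in $I_{\bsj,\bsm}$ if and only if the first $j_i$ coordinates of $C_i \vec{k}$ equal the dyadic digits of $m_i$, for each $i = 1,\ldots,d$. This is a system of $j_1 + \cdots + j_d = n - t^\ast$ linear equations over $\mathbb{F}_2$ in the unknown $\vec{k} \in \mathbb{F}_2^n$, whose coefficient vectors are precisely the top $j_i$ rows of each $C_i$. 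Definition~\ref{def_net} (in the case $\alpha = 1$, $\sum_i j_i \le n - t^\ast$) forces these $n - t^\ast$ rows to be linearly independent over $\mathbb{F}_2$, so the solution set has dimension $n - (n - t^\ast) = t^\ast$ and hence cardinality $2^{t^\ast}$.

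Combining the two steps gives that each such dyadic interval contains exactly $2^{\lceil t/\alpha\rceil}$ points, in particular at most that many, as claimed. There is no real obstacle here: the work is packaged entirely into \eqref{eq_t_tprime} and the standard linear-algebra counting of solutions, so the proof should be short and essentially reduce to a citation of \cite[Lemma~3.1]{DHMP16} together with the rank computation above.
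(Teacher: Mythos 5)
Your proposal is correct and follows exactly the paper's route: reduce to an order~$1$ digital $(\lceil t/\alpha\rceil,n,d)$-net via \eqref{eq_t_tprime} and then invoke the classical fair-distribution property that every dyadic interval of order $n-\lceil t/\alpha\rceil$ contains exactly $2^{\lceil t/\alpha\rceil}$ points. The only difference is that the paper cites \cite{DP10,N92} for that classical property, whereas you supply the standard rank argument from Definition~\ref{def_net} yourself; that argument is sound.
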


\begin{proof}
As already mentioned, every order $\alpha$ digital $(t,n,d)$-net over $\mathbb{F}_2$ is an order~1 digital $(\lceil t/\alpha \rceil,n,d)$-net over $\mathbb{F}_2$. Then every dyadic interval of order $n-\lceil t/\alpha\rceil$ contains exactly $2^{\lceil t/\alpha\rceil}$ points of $\P_{2^n,d}$ (see \cite{DP10,N92}). 
\end{proof}

\iffalse
The following lemma is a slight generalization of \cite[Lemma~5.9]{M15}. The result was originally proved for order $2$ digital $(t,n,d)$-nets. The extension to digitally shifted order $2$ digital $(t,n,d)$-nets follows with almost exactly the same arguments as the proof of \cite[Lemma~5.9]{M15} (not repeated here). We restrict ourselves to the finite field $\mathbb{F}_2$.

\begin{lem} \label{haarcoeffdignets}
 Let $\P_{2^n,d}$ be a digitally shifted order $2$ digital $(t,n,d)$-net over $\mathbb{F}_2$. Let $\bsj\in\N_{-1}^d$ with $|\bsj|+t/2\leq n$ and $\bsm\in\D_{\bsj}$. Then
\[ |\langle D_{\P_{2^n,d}},h_{\bsj,\bsm}\rangle| \ll 2^{-2n+t}(2n-t-2|\bsj|)^{d-1}. \]
\end{lem}
\fi

The following lemma, implicitly shown in \cite{DHMP16}, is utmost important for our analysis. Throughout the paper let $\ld$ denote the logarithm in base $2$, i.e. $\ld x=(\log x)/\log 2$.

\begin{lem}\label{le131416}
Let $\S_d$ be an order 2 digital $(t,d)$-sequence over $\mathbb{F}_2$ with generating matrices $E_i=(e_{i,k,\ell})_{k,\ell \ge 1}$ for which $e_{i,k,\ell}=0$ for all $k > 2\ell$ and for all $i\in \{1,2,\ldots,d\}$.
\begin{enumerate}[(i)]
\item If $|\bsj|+t/2\geq \ld N$ we have: if $I_{\bsj,\bsm}$ contains points of $\P_{N,d}$, then \[|\langle D_{\S_d}^N,h_{\bsj,\bsm}\rangle| \ll  2^{t/2}\,N^{-1} 2^{-|\bsj|}.\] At least $2^{|\bsj|}-N$ such intervals contain no points of $\P_{N,d}$ and in such cases we have 
\[|\langle D_{\S_d}^N,h_{\bsj,\bsm}\rangle| \ll 2^{-2|\bsj|}.\]
\item If $|\bsj|+t/2< \ld N$, more precisely $n_\mu \le |\bsj|+t/2< n_{\mu+1}$, where $N=2^{n_r}+\cdots+2^{n_1}$ with $n_r > \ldots >n_1 \ge 0$, then \[|\langle D_{\S_d}^N,h_{\bsj,\bsm}\rangle| \ll 2^t\,N^{-1} \left(2^{-|\bsj|} +  (2n_{\mu+1} - t - 2|\bsj|)^{d-1}2^{-n_{\mu+1}}\right).\]
\end{enumerate}
\end{lem}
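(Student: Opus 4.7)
The plan is to exploit the binary decomposition $N=2^{n_r}+\cdots+2^{n_1}$ and to split the first $N$ elements of $\S_d$ into disjoint blocks $P_1,\dots,P_r$, where $P_s$ consists of $2^{n_s}$ consecutive sequence elements. The assumption $e_{i,k,\ell}=0$ for $k>2\ell$ together with the higher-order digital construction ensures that each $P_s$ is a digitally shifted order $2$ digital $(t,n_s,d)$-net over $\F_2$, and linearity of the Haar coefficient gives
\[
\langle D_{\S_d}^N,h_{\bsj,\bsm}\rangle \;=\; \frac{1}{N}\sum_{s=1}^{r}2^{n_s}\langle D_{P_s},h_{\bsj,\bsm}\rangle.
\]
Each block inner product is then estimated in one of two regimes. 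For $n_s>|\bsj|+t/2$ I would invoke the order $2$ Haar coefficient bound (in the spirit of \cite[Lemma~5.9]{M15} and the proofs in \cite{DHMP16}), which is valid also for digital shifts,
\[
|\langle D_{P_s},h_{\bsj,\bsm}\rangle|\ll 2^{-2n_s+t}(2n_s-t-2|\bsj|)^{d-1}.
\]
For $n_s\le|\bsj|+t/2$ I would instead start from the direct identity
\[
\langle D_{P_s},h_{\bsj,\bsm}\rangle \;=\; \frac{1}{2^{n_s}}\sum_{\bsz\in P_s\cap I_{\bsj,\bsm}}\prod_{i=1}^{d}g_i(z_i) \;-\; \prod_{i=1}^{d}\int_0^1 h_{j_i,m_i}(x)\,x\,\dint x,
\]
where $g_i(z_i)=\int_{z_i}^1 h_{j_i,m_i}(x)\dint x$ vanishes outside $I_{j_i,m_i}$ and satisfies $|g_i(z_i)|\le 2^{-j_i-1}$, using Lemma~\ref{fairint} to control $|P_s\cap I_{\bsj,\bsm}|$.

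For Case~(i) the hypothesis $|\bsj|+t/2\ge\ld N$ places every block in the second regime. If $I_{\bsj,\bsm}$ contains no point of $\P_{N,d}$, then all block sums over points vanish (because for any such $\bsz$ at least one factor $g_i(z_i)$ is zero), leaving only the continuous term, which has magnitude $\ll 2^{-2|\bsj|}$. If $I_{\bsj,\bsm}$ does contain points, I would use that the first $N$ sequence elements sit inside the first $2^{\lceil\ld N\rceil}$ elements, which form an order $2$ digital $(t,\lceil\ld N\rceil,d)$-net; Lemma~\ref{fairint} then bounds $|\P_{N,d}\cap I_{\bsj,\bsm}|$ by $\ll 2^{t/2}$, and combined with $|g_i|\le 2^{-j_i-1}$ this produces the claimed $\ll 2^{t/2}N^{-1}2^{-|\bsj|}$. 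The ``at least $2^{|\bsj|}-N$ empty cells'' statement is a pigeonhole observation since $\P_{N,d}$ has only $N$ points to distribute among the $2^{|\bsj|}$ dyadic cells of order $\bsj$.

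For Case~(ii) I would split the block sum at $s=\mu$. For the large blocks $s>\mu$ the first regime applies and the geometric decay in $2^{-n_s}$ dominates the polynomial factor, so
\[
\sum_{s>\mu}2^{n_s}|\langle D_{P_s},h_{\bsj,\bsm}\rangle|\;\ll\;2^t\sum_{s>\mu}2^{-n_s}(2n_s-t-2|\bsj|)^{d-1}\;\ll\;2^t\,(2n_{\mu+1}-t-2|\bsj|)^{d-1}2^{-n_{\mu+1}},
\]
giving the second term of the claimed bound as the tail collapses to its first term. For the small blocks $s\le\mu$, the second-regime identity together with Lemma~\ref{fairint} (providing $|P_s\cap I_{\bsj,\bsm}|\le\min(2^{\lceil t/2\rceil},2^{n_s})$) and the elementary bound $\sum_{s\le\mu}2^{n_s}\le 2^{n_\mu+1}\le 2^{|\bsj|+t/2+1}$ combine to give the aggregate contribution $\ll 2^t\cdot 2^{-|\bsj|}$; dividing by $N$ yields the first term.

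The main technical obstacles are, first, to verify that the order $2$ Haar coefficient estimate used in the $n_s>|\bsj|+t/2$ regime transfers to \emph{digitally shifted} nets with the same constants (so that it can be applied uniformly to every block $P_s$, not only to the first), and second, to arrange the small-block bookkeeping in Case~(ii) so that the $\mu$ block contributions do not leave a residual $\log N$ factor---the key being to keep the block-independent continuous term $\prod_i\int h_{j_i,m_i}(x)x\dint x$ outside the triangle inequality rather than to count it once per block. Both manipulations are carried out within the $L_p$-discrepancy analysis of \cite{DHMP16}, from which the present lemma can be extracted by inspection, as signalled by the phrase ``implicitly shown in~\cite{DHMP16}''.
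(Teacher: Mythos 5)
Your proposal is correct and follows essentially the same route as the source: the paper's proof of this lemma is a direct citation to \cite{DHMP16}, and the argument there (reproduced almost verbatim in a commented-out passage of this paper's own source) is exactly your block decomposition of $\P_{N,d}$ into digitally shifted order~$2$ digital $(t,n_s,d)$-nets, the Haar coefficient bound $\ll 2^{-2n_s+t}(2n_s-t-2|\bsj|)^{d-1}$ for the large blocks, the aggregated counting estimate via Lemma~\ref{fairint} for the small blocks, and the collapse of the geometric tail to its first term. No gaps.
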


\begin{proof}
This is \cite[Eq. (13), (14) and (16)]{DHMP16}. 
\end{proof}

\begin{lem}\label{index_dim_red}
Let $r\in\N_0$ and $s\in\N$. Then
\[ \#\{(a_1,\ldots,a_s)\in\N_0^s:\; a_1 + \cdots + a_s = r\} \leq (r + 1)^{s-1}. \]
\end{lem}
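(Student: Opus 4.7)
The plan is to observe that any weak composition $(a_1,\ldots,a_s)\in\N_0^s$ of $r$ is completely determined by its first $s-1$ entries, since the last entry must equal $a_s = r - (a_1+\cdots+a_{s-1})$. Hence the map $(a_1,\ldots,a_s)\mapsto(a_1,\ldots,a_{s-1})$ is injective from the set to be counted into $\N_0^{s-1}$.

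Next I would note that each $a_i$ in such a composition satisfies $0 \le a_i \le r$, so the image of the above map is contained in $\{0,1,\ldots,r\}^{s-1}$, a set of cardinality $(r+1)^{s-1}$. Combining injectivity with this cardinality bound yields the claim immediately.

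There is no real obstacle here; the statement is a convenient (and deliberately loose) substitute for the sharp count $\binom{r+s-1}{s-1}$, and the projection argument above is the cleanest way to present it. I would state the proof in two or three lines, since the bound will be invoked later only to control combinatorial factors of the form $(2n-t-2|\bsj|)^{d-1}$ arising from Lemma~\ref{le131416}, where sharpness is irrelevant.
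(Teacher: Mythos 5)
Your proof is correct and complete: the projection onto the first $s-1$ coordinates is injective because $a_s$ is determined by the others, and each coordinate lies in $\{0,1,\ldots,r\}$, giving the bound $(r+1)^{s-1}$. The paper itself does not prove this lemma but merely refers to \cite[Proof of Lemma~16.26]{DP10}; the argument given there is the same standard counting idea, so your self-contained two-line version is entirely adequate (and note it even handles the degenerate case $s=1$ correctly, where the image is the one-element set $\N_0^{0}$).
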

For a proof of this result we refer to \cite[Proof of Lemma~16.26]{DP10}.

\begin{lem}\label{index_dim_red_log}
 Let $K\in\N$, $A>1$ and $q,s \ge 0$. Then we have
\[ \sum_{r = 0}^{K-1} A^r (K-r)^q r^s \ll A^K\,K^s, \]
 where the implicit constant is independent of $K$.
\end{lem}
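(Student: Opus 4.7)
The plan is to reduce the sum to a convergent geometric-type series via a change of summation index. Since $A>1$, the factor $A^r$ grows geometrically, so morally the mass of the sum concentrates near $r=K$; we should change variables so that this concentration becomes a rapidly convergent tail.

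Concretely, I would set $i = K-r$, so that as $r$ ranges over $\{0,1,\dots,K-1\}$ the new index $i$ ranges over $\{1,2,\dots,K\}$. The sum then rewrites as
\[
\sum_{r=0}^{K-1} A^r (K-r)^q r^s \;=\; A^{K}\sum_{i=1}^{K} A^{-i}\, i^{q}\, (K-i)^{s}.
\]
Now I would use the crude but uniform bound $(K-i)^{s}\le K^{s}$, valid for all $1\le i\le K$, to peel off the $K^{s}$ factor:
\[
A^{K}\sum_{i=1}^{K} A^{-i}\, i^{q}\, (K-i)^{s} \;\le\; A^{K} K^{s} \sum_{i=1}^{K} A^{-i}\, i^{q} \;\le\; A^{K} K^{s}\sum_{i=1}^{\infty} A^{-i}\, i^{q}.
\]
The remaining series $\sum_{i\ge 1} A^{-i} i^{q}$ is convergent because $A>1$ (the ratio test gives ratio $1/A<1$), and its value depends only on $A$ and $q$, not on $K$ or $s$. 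This yields the desired estimate with an implicit constant independent of $K$.

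There is no real obstacle here: the argument is essentially a one-line geometric-series comparison once the change of index is made. The only point worth a moment's care is that the bound $(K-i)^s\le K^s$ is wasteful when $i$ is small compared to $K$, but this looseness is harmless because the decay of $A^{-i}$ makes the sum converge regardless. If a slightly sharper constant were needed, one could split the sum at $i=K/2$ and bound the two halves separately, but for the stated $\ll$-bound this is unnecessary.
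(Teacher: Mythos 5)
Your proof is correct: the change of index $i=K-r$, the uniform bound $(K-i)^s\le K^s$, and the comparison with the convergent series $\sum_{i\ge 1}A^{-i}i^q$ together give exactly the claimed estimate, with the implicit constant depending only on $A$ and $q$. The paper does not prove this lemma itself but defers to \cite[Lemma~5.2]{M15}; your self-contained argument is the standard one for such sums and there is nothing to add.
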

For a proof we refer to \cite[Lemma~5.2]{M15}.

The subsequent two lemmas are required in order to estimate the Haar coefficients of the discrepancy function. The first one is a special case of \cite[Lemma~5.1]{M13b}.

\begin{lem}\label{lem_haar_coeff_vol}
Let $f(\bsx) = x_1\cdots x_d$ for $\bsx=(x_1,\ldots,x_d)\in [0,1)^d$. Let $\bsj\in\N_{-1}^d$ and $\bsm\in\D_{\bsj}$. Then $|\langle f,h_{\bsj,\bsm}\rangle|\asymp 2^{-2|\bsj|}$.
\end{lem}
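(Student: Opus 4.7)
The plan is to exploit the tensor product structure: both the integrand $f(\bsx)=x_1\cdots x_d$ and the Haar function $h_{\bsj,\bsm}(\bsx)=h_{j_1,m_1}(x_1)\cdots h_{j_d,m_d}(x_d)$ factor coordinatewise, so Fubini reduces the claim to a one-dimensional computation that I would then multiply back together.

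First, I would show that for every single coordinate
\[
\int_0^1 x\, h_{j,m}(x)\dint x
\]
equals $1/2$ when $j=-1$ (since $h_{-1,0}\equiv 1$ on $[0,1)$), and equals $-2^{-2j-2}$ when $j\in\N_0$. The latter is a direct computation: writing $I_{j,m}=[a,b)$ with $b-a=2^{-j}$ and midpoint $c=(a+b)/2$, the integral is
\[
\int_a^c x\dint x-\int_c^b x\dint x \;=\; c^2-\tfrac{a^2+b^2}{2}\;=\;-\tfrac{(b-a)^2}{4}\;=\;-2^{-2j-2}.
\]

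Next I would multiply these one-dimensional values. Let $A(\bsj)=\{i:j_i=-1\}$ and let $k=|A(\bsj)|$. Then, using that $|\bsj|=\sum_{i\notin A(\bsj)} j_i$, Fubini gives
\[
\langle f,h_{\bsj,\bsm}\rangle \;=\; \prod_{i\in A(\bsj)}\tfrac{1}{2}\;\cdot\;\prod_{i\notin A(\bsj)}\bigl(-2^{-2j_i-2}\bigr) \;=\; (-1)^{d-k}\,2^{-k}\,2^{-2(d-k)}\,2^{-2|\bsj|}.
\]
Taking absolute values yields
\[
|\langle f,h_{\bsj,\bsm}\rangle|\;=\;2^{-(2d-k)}\,2^{-2|\bsj|},
\]
and since $0\le k\le d$ the prefactor lies in $[2^{-2d},2^{-d}]$, which is a positive constant depending only on $d$. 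This proves the claimed equivalence $|\langle f,h_{\bsj,\bsm}\rangle|\asymp 2^{-2|\bsj|}$.

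There is no real obstacle here; the only thing to be careful about is the bookkeeping for the degenerate coordinates $j_i=-1$, which contribute the factor $1/2$ rather than $-2^{-2j_i-2}$, and to notice that the estimate is in fact independent of $\bsm$ (the sign depends only on the number of ``active'' coordinates, and no cancellation can occur in the absolute value).
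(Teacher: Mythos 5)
Your proof is correct, and it is in fact more self-contained than what the paper does: the paper does not prove this lemma at all but simply remarks that it is a special case of Lemma~5.1 in Markhasin's paper \cite{M13b}. Your direct computation via the tensor product structure is the natural argument one would find by unwinding that reference. The one-dimensional integrals are right: $\int_0^1 x\,h_{-1,0}(x)\dint x = 1/2$, and for $j\ge 0$ the value $c^2-\tfrac{a^2+b^2}{2}=-\tfrac{(b-a)^2}{4}=-2^{-2j-2}$ uses the paper's convention that $h_{j,m}$ is $+1$ on the left half and $-1$ on the right half, which you have respected. The bookkeeping with $|\bsj|=\sum_{i\notin A(\bsj)}j_i$ (since $\max(j_i,0)=0$ for the degenerate coordinates) is also correct, and your conclusion is actually stronger than the lemma: you obtain the exact value $|\langle f,h_{\bsj,\bsm}\rangle| = 2^{-(2d-k)}2^{-2|\bsj|}$, independent of $\bsm$, with the implied constants in $\asymp$ pinned down to $[2^{-2d},2^{-d}]$. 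The only cosmetic remark is that the equivalence constant necessarily depends on $d$, so strictly one should write $\asymp_d$; the paper's statement omits the subscript as well, so this is not a defect of your argument.
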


The next lemma is a special case of \cite[Lemma~5.2]{M13b}.
\begin{lem} \label{lem_haar_coeff_number}
Let $\bsz = (z_1,\ldots,z_d) \in [0,1)^d$ and $g(\bsx) = \chi_{[\bszero,\bsx)}(\bsz)$ for $\bsx = (x_1, \ldots, x_d) \in [0,1)^d$. Let $\bsj\in\N_{-1}^d$ and $\bsm\in\D_{\bsj}$. Then $\langle g,h_{\bsj,\bsm}\rangle = 0$ if $\bsz$ is not contained in the interior of the dyadic interval $I_{\bsj,\bsm}$. If $\bsz$ is contained in the interior of $I_{\bsj,\bsm}$ then $|\langle g,h_{\bsj,\bsm}\rangle|\ll 2^{-|\bsj|}$.
\end{lem}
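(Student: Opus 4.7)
The plan is to exploit the tensor-product structure of both $g$ and $h_{\bsj,\bsm}$ to reduce everything to a one-dimensional computation, and then analyze the resulting one-dimensional Haar coefficients by cases.

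First I would observe that
\[
g(\bsx) = \chi_{[\bszero,\bsx)}(\bsz) = \prod_{i=1}^d \chi_{[0,x_i)}(z_i) = \prod_{i=1}^d g_i(x_i),
\]
where $g_i(x_i)=\chi_{(z_i,1]}(x_i)$, while by definition $h_{\bsj,\bsm}(\bsx)=\prod_{i=1}^d h_{j_i,m_i}(x_i)$. Fubini then gives the factorization
\[
\langle g,h_{\bsj,\bsm}\rangle = \prod_{i=1}^d \langle g_i, h_{j_i,m_i}\rangle,
\]
so the entire lemma reduces to analyzing the one-dimensional inner products.

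Next I would establish the vanishing claim. For each coordinate $i$ with $j_i\ge 0$ the Haar function $h_{j_i,m_i}$ is supported in $I_{j_i,m_i}=[2^{-j_i}m_i,2^{-j_i}(m_i+1))$ and has mean zero. If $z_i$ lies strictly to the right of $I_{j_i,m_i}$, then $g_i\equiv 0$ on the support and the factor vanishes; if $z_i$ lies strictly to the left, then $g_i\equiv 1$ on the support so the factor becomes $\int h_{j_i,m_i}=0$; and the same reasoning (with the appropriate constant value of $g_i$) handles the two endpoint cases, so $\langle g_i,h_{j_i,m_i}\rangle=0$ whenever $z_i\notin\mathrm{int}(I_{j_i,m_i})$. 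For the coordinate $j_i=-1$ we have $I_{-1,0}=[0,1)$ and its interior is $(0,1)$, which already contains $z_i$ whenever the corresponding product factor is being considered. Hence if $\bsz$ fails to lie in $\mathrm{int}(I_{\bsj,\bsm})$, at least one factor is zero and the entire inner product vanishes.

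For the size bound, assuming now that $\bsz\in\mathrm{int}(I_{\bsj,\bsm})$, I would estimate each factor trivially:
\[
|\langle g_i,h_{j_i,m_i}\rangle| \le \int_{I_{j_i,m_i}} |h_{j_i,m_i}(x_i)|\,\mathrm{d}x_i = 2^{-j_i}\qquad (j_i\ge 0),
\]
while for $j_i=-1$ we simply have $|\langle g_i,h_{-1,0}\rangle|=1-z_i\le 1$. Multiplying these bounds over $i=1,\ldots,d$ gives
\[
|\langle g,h_{\bsj,\bsm}\rangle|\le \prod_{i:\,j_i\ge 0} 2^{-j_i}\cdot\prod_{i:\,j_i=-1} 1 = 2^{-|\bsj|},
\]
which is the asserted bound $|\langle g,h_{\bsj,\bsm}\rangle|\ll 2^{-|\bsj|}$.

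There is no real obstacle here; the only delicate point is the careful bookkeeping at the boundary of the dyadic interval (distinguishing the two endpoints of $I_{j_i,m_i}$ from the midpoint, which lies in the interior even though it separates $I_{j_i,m_i}^+$ and $I_{j_i,m_i}^-$), but this is immediate once one writes $g_i$ explicitly as a step function.
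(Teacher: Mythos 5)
Your proof is correct, and it is more than the paper itself offers: the paper does not prove Lemma~\ref{lem_haar_coeff_number} at all but simply cites it as a special case of Lemma~5.2 in Markhasin's paper \cite{M13b}. Your route --- factor $g$ and $h_{\bsj,\bsm}$ as tensor products, reduce to the one-dimensional inner products $\langle \chi_{(z_i,1)}, h_{j_i,m_i}\rangle$, get vanishing from the mean-zero property of $h_{j_i,m_i}$ when $z_i$ lies outside (or on the boundary of) $I_{j_i,m_i}$, and get the size bound from the trivial estimate $|\langle g_i,h_{j_i,m_i}\rangle|\le \|h_{j_i,m_i}\|_{L_1}=2^{-j_i}$ --- is the standard argument and is exactly what one would find by unwinding the cited reference; the final product even gives the clean constant $2^{-|\bsj|}$ rather than just $\ll 2^{-|\bsj|}$.

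One small point worth being honest about, which concerns the statement as much as your proof: in a coordinate with $j_i=-1$ the factor is $\langle \chi_{(z_i,1)}, h_{-1,0}\rangle = 1-z_i$, which does \emph{not} vanish when $z_i=0$, even though $z_i=0$ is not in the interior $(0,1)$ of $I_{-1,0}$. So the vanishing assertion, read literally, fails in the edge case where the only coordinate excluding $\bsz$ from $\mathrm{int}(I_{\bsj,\bsm})$ is one with $j_i=-1$ and $z_i=0$. Your sentence ``which already contains $z_i$ whenever the corresponding product factor is being considered'' glosses over this. It is harmless --- the bound $|\langle g,h_{\bsj,\bsm}\rangle|\le 2^{-|\bsj|}$ still holds there, and in the applications the vanishing is only ever exploited for $\bsj\in\N_0^d$ --- but a careful write-up should either restrict the vanishing claim to coordinates with $j_i\ge 0$ or adopt the convention that the ``interior'' of $I_{-1,0}$ is all of $[0,1)$.
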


\subsection{The proof of the upper bound in Theorem~\ref{thm:bmo}}\label{sec_bmo1}

\begin{proof}[Proof of Eq. \eqref{bmo:ubd} in Theorem~\ref{thm:bmo}]
The proof uses ideas from \cite{BM15}. Let $N=2^{n_r}+\cdots+2^{n_1}$ with $n_r>\ldots>n_1\geq 0$ and let $\P_{N,d}$ be the point set which consists of the first $N$ terms of $\S_d$. We need to prove that \[\lambda_d(U)^{-1} \sum_{\bsj \in \mathbb{N}_0^d} 2^{|\bsj|} \sum_{\bsm \in \D_{\bsj}\atop I_{\bsj,\bsm} \subseteq U}|\langle D_{\S_d}^N, h_{\bsj,\bsm}\rangle|^2 \ll_d 2^{2 t} \,N^{-2} (\log N)^{d}.\]

It is clear that for each $U \subseteq [0,1)^d$ and $\bsj \in \mathbb{N}_0^d$ there are at most $2^{|\bsj|} \lambda_d(U)$ values of $\bsm \in \D_{\bsj}$ such that $I_{\bsj,\bsm} \subseteq U$.

We split the above sum over $\bsj \in \mathbb{N}_0^d$ into three parts according to the size of $|\bsj|$.

First we consider those $\bsj$ for which $\ld N -t/2 \le |\bsj| < \ld N$. From Lemma~\ref{le131416} we know that \[|\langle D_{\S_d}^N,h_{\bsj,\bsm}\rangle| \ll 2^{t/2}\,N^{-1} 2^{-|\bsj|}\] if $I_{\bsj,\bsm}$ contains points of $\P_{N,d}$ and \[|\langle D_{\S_d}^N,h_{\bsj,\bsm}\rangle| \ll 2^{-2|\bsj|}\] otherwise. With these estimates we obtain
\begin{eqnarray*}
\lefteqn{\lambda_d(U)^{-1} \sum_{\bsj \in \mathbb{N}_0^d \atop \ld N -t/2 \le |\bsj| < \ld N} 2^{|\bsj|} \sum_{\bsm \in \D_{\bsj}\atop I_{\bsj,\bsm} \subseteq U}|\langle D_{\S_d}^N, h_{\bsj,\bsm}\rangle|^2}\\
& \ll & \lambda_d(U)^{-1} \sum_{\bsj \in \mathbb{N}_0^d \atop \ld N -t/2 \le |\bsj| < \ld N} 2^{|\bsj|} \left(\sum_{\bsm \in \D_{\bsj}\atop {I_{\bsj,\bsm} \subseteq U \atop I_{\bsj,\bsm}\cap \P_{N,d}\not=\emptyset}} 2^t\,N^{-2} 2^{-2 |\bsj|} +   \sum_{\bsm \in \D_{\bsj}\atop {I_{\bsj,\bsm} \subseteq U \atop I_{\bsj,\bsm}\cap \P_{N,d}=\emptyset}} 2^{-4 |\bsj|}\right)\\
& \le & \lambda_d(U)^{-1} \sum_{\bsj \in \mathbb{N}_0^d \atop \ld N -t/2 \le |\bsj| < \ld N} 2^{|\bsj|} \max\left(2^t\,N^{-2} 2^{-2 |\bsj|} , 2^{-4 |\bsj|}\right) 2^{|\bsj|}\lambda_d(U)\\
& \ll & 2^t \,N^{-2} \sum_{\bsj \in \mathbb{N}_0^d \atop \ld N -t/2 \le |\bsj| < \ld N} 1\\
& \ll & 2^t \,N^{-2} \sum_{k < \ld N} k^{d-1}\\
& \ll & 2^t \,N^{-2} (\log N)^d.
\end{eqnarray*}

Now we consider those $\bsj \in \mathbb{N}_0^d$ for which $|\bsj| < \ld N -t/2$. More precisely assume that we have
\[ n_{\mu} \le |\bsj|+t/2 < n_{\mu+1} \]
for some $\mu\in \{0, 1,\ldots,r\}$, where we set $n_0 = 0$ and $n_{r+1} = \ld N$. From Lemma~\ref{le131416} we know that 
\[ |\langle D_{\S_d}^N,h_{\bsj,\bsm}\rangle| \ll 2^t\,N^{-1} \left(2^{-|\bsj|} +  (2n_{\mu+1} - t - 2|\bsj|)^{d-1}2^{-n_{\mu+1}}\right).\]
Then we have
\begin{eqnarray*}
\lefteqn{\lambda_d(U)^{-1} \sum_{\bsj \in \mathbb{N}_{0}^d \atop  |\bsj| < \ld N -t/2} 2^{|\bsj|} \sum_{\bsm \in \D_{\bsj}\atop I_{\bsj,\bsm} \subseteq U}|\langle D_{\S_d}^N, h_{\bsj,\bsm}\rangle|^2}\\
& \ll & \sum_{\mu=0}^r \sum_{\bsj \in \mathbb{N}_{0}^d \atop n_\mu \le |\bsj| + t/2 < n_{\mu+1}}   2^{2|\bsj|} \,2^{2t}\,N^{-2} \left(2^{-|\bsj|} +  (2n_{\mu+1} - t - 2|\bsj|)^{d-1}2^{-n_{\mu+1}}\right)^2.
\end{eqnarray*}
From here we can proceed in the same way as in \cite[Proof of Theorem~2.2]{DHMP16}. This way we obtain 
\[\lambda_d(U)^{-1} \sum_{\bsj \in \mathbb{N}_0^d \atop  |\bsj| < \ld N -t/2} 2^{|\bsj|} \sum_{\bsm \in \D_{\bsj}\atop I_{\bsj,\bsm} \subseteq U}|\langle D_{\S_d}^N, h_{\bsj,\bsm}\rangle|^2 \ll 2^{2t} \,N^{-2} (\log N)^d.\]

Finally we need to study the case $|\bsj| \ge \ld N$ which is the most involved one among the three cases considered. As in \cite{BM15} we treat the counting part and the volume part of the discrepancy function separately. For the volume part $L(\bsx)=x_1\cdots x_d$ it is easy to show that $|\langle L , h_{\bsj,\bsm}\rangle| \asymp 2^{-2|\bsj|}$. Thus we have
\begin{eqnarray*}
\lambda_d(U)^{-1} \sum_{\bsj \in \mathbb{N}_0^d \atop |\bsj| \ge \ld N} 2^{|\bsj|} \sum_{\bsm \in \D_{\bsj}\atop I_{\bsj,\bsm}\subseteq U} |\langle L , h_{\bsj,\bsm}\rangle|^2 \ll  \sum_{\bsj \in \mathbb{N}_0^d \atop |\bsj| \ge \ld N} 2^{-2|\bsj|}\ll \sum_{k \ge \ld N} k^{d-1}\,2^{-2k} \ll N^{-2} (\log N)^{d-1}.
\end{eqnarray*}

Let $C(\bsx)=N^{-1}\sum_{\bsz \in \P_{N,d}} \chi_{[\bszero,\bsx)}(\bsz)$ be the counting part of the discrepancy function.

Let $\mathcal{J}$ be the family of all dyadic intervals $I_{\bsj,\bsm} \subseteq U$ with $|\bsj| \ge \ld N$ such that $\langle C, h_{\bsj,\bsm}\rangle \not=0$. Consider the subfamily $\widetilde{\mathcal{J}} \subseteq \mathcal{J}$ of all maximal (with respect to inclusion) dyadic intervals in $\mathcal{J}$. We show that we have 
\begin{equation}\label{toshow}
\sum_{I_{\bsj,\bsm}\in \widetilde{\mathcal{J}}} \lambda_d(I_{\bsj,\bsm}) \ll (\log N)^{d-1} \lambda_d(U).
\end{equation}

To see this consider an interval $I_{\bsj,\bsm} \in \mathcal{J}$. The condition $\langle C,h_{\bsj,\bsm}\rangle \not=0$ implies that the interior of $I_{\bsj,\bsm}$ contains at least one point of $\P_{N,d}$. Now we need the assumption that the generating matrices $E_i=(e_{i,k,\ell})_{k,\ell \ge 1}$ are chosen such that $e_{i,k,\ell}=0$ for all $k > 2 \ell$ and all $i\in \{1,2,\ldots,d\}$. This implies that the points of $\P_{N,d}$ have binary coordinates of length $2n$ (where $n=\lfloor \ld N \rfloor +1$). Since the interior of $I_{\bsj,\bsm}$ contains at least one point of $\P_{N,d}$ we obtain that each side of $I_{\bsj,\bsm}$ has length at least $2^{-2n}$, i.e., $j_i \in \{0,1,\ldots,2n\}$ for all $i\in \{1,2,\ldots,d\}$.

Now fix $r_1,r_2,\ldots,r_{d-1} \in \{0,1,\ldots,2n\}$ and consider the family $\widetilde{\mathcal{J}}_{r_1,\ldots,r_{d-1}} \subseteq \widetilde{\mathcal{J}}$  of those intervals $I_{\bsj,\bsm}\in \mathcal{J}$ for which $j_i=r_i$ for $i \in \{1,2,\ldots,d-1\}$ (i.e., with fixed lengths of their first $d-1$ sides). If two of these intervals have a nonempty intersection, then their first $d-1$ sides have to coincide and hence one would have to be contained in the other. This however contradicts the maximality of the intervals in $\widetilde{\mathcal{J}}$. Thus we have shown that the intervals in  $\widetilde{\mathcal{J}}_{r_1,\ldots,r_{d-1}}$ are pairwise disjoint. From this observation we find that \[\sum_{I_{\bsj,\bsm}\in \widetilde{\mathcal{J}}} \lambda_d(I_{\bsj,\bsm}) = \sum_{r_1,\ldots,r_{d-1}=0}^{2n} \sum_{I_{\bsj,\bsm}\in \widetilde{\mathcal{J}}_{r_1,\ldots,r_{d-1}}} \lambda_d(I_{\bsj,\bsm})  \le \sum_{r_1,\ldots,r_{d-1}=0}^{2n} \lambda_d(U) \ll (2n)^{d-1} \lambda_d(U),\] which implies \eqref{toshow}.

For dyadic intervals $J$, we define \[C^J(\bsx)=\sum_{\bsz \in \P_{N,d} \cap J} \chi_{[\bszero,\bsx)}(\bsz).\] It is easy to see that $\langle C, h_{\bsj,\bsm}\rangle = \langle C^J, h_{\bsj,\bsm}\rangle$ whenever $I_{\bsj,\bsm}\subseteq J$. From $n=\lfloor \ld N \rfloor +1$ we know that $N < 2^n$. Then the point set $\P_{N,d}$ of the first $N$ elements of $\S_d$ is a subset of $\P_{2^n,d}$ consisting of the first $2^n$ elements of $\S_d$. From the construction of $\S_d$ it follows that $\P_{2^n,d}$ is an order $2$ digital $(t,n,d)$-net over $\mathbb{F}_2$. Therefore, according to Lemma~\ref{fairint}, in an interval $I_{\bsj,\bsm}$ there are at most $2^{\lceil t/2\rceil}$ points of $\P_{2^n,d}$ and hence  there are at most $2^{\lceil t/2\rceil}$ points of $\P_{N,d}$ in $I_{\bsj,\bsm}$. Therefore, for any $J \in \widetilde{\mathcal{J}}$ we have $0 \le C^J(\bsx) \le 2^{\lceil t/2 \rceil} N^{-1}$, which implies
\begin{equation}\label{hjkl}
\|C^J\|_{L_2(J)} \le  \|2^{\lceil t/2 \rceil} N^{-1} \|_{L_2(J)} \le N^{-1} 2^{\lceil t/2 \rceil} \lambda_d(J)^{1/2}.
\end{equation}

Using the orthogonality of Haar functions, Bessel's inequality, \eqref{toshow}, and \eqref{hjkl}, we obtain
\begin{eqnarray*}
\lambda_d(U)^{-1}\sum_{I_{\bsj,\bsm} \in \mathcal{J}}2^{|\bsj|}|\langle C, h_{\bsj,\bsm}\rangle|^2 & \le &  \lambda_d(U)^{-1}\sum_{J \in \widetilde{\mathcal{J}}} \sum_{I_{\bsj,\bsm} \subseteq J} 2^{|\bsj|}|\langle C^J, h_{\bsj,\bsm}\rangle|^2 \\
& \le & \lambda_d(U)^{-1}\sum_{J \in \widetilde{\mathcal{J}}} \| C^J\|^2_{L_2(J)}\\
& \le & \lambda_d(U)^{-1}\, 2^{t+1} \,N^{-2} \sum_{J \in \widetilde{\mathcal{J}}} \lambda_d(J) \\
& \ll & 2^{t+1} \,N^{-2} (\log N)^{d-1}.
\end{eqnarray*}
The result now follows by putting the three parts together and taking square roots.
\end{proof}

\subsection{The proof of Theorem~\ref{thm:exporlicz}}\label{sec_exp1}

\begin{proof}[Proof of Theorem~\ref{thm:exporlicz}]
We prove now the bound on the exponential Orlicz norm of the discrepancy function. Our proof uses ideas from \cite{BM15}. Let again $N = 2^{n_r} + \cdots + 2^{n_1}$ with $n_r > \ldots > n_1 \ge 0$ and let $\P_{N,d}$ be the point set which consists of the first $N$ terms of $\S_d$. We first restrict ourselves to the $\exp(L^{2/(d-1)})$-norm. The general result will follow directly by interpolating between the $\exp(L^{2/(d-1)})$-norm and the $L_\infty$-norm using Proposition~\ref{prp_exp_int_inf} and the fact that for digital sequences we have
\begin{equation*}
\|D^N_{\S_d} | L_\infty([0,1)^d) \| \ll_d 2^t \,N^{-1} (\log N)^d,
\end{equation*}
which follows from the fact that $\S_d$ is a digital $(t',d)$-sequence (i.e., order $1$ digital sequence) and \cite[Theorem~4.13]{N92}.

We first use the Haar series expansion for the discrepancy function, the triangle inequality and the Chang-Wilson-Wolff inequality (Lemma~\ref{CWWIneq}) to obtain
\begin{align}
\| D^N_{\S_d} | \exp(L^{2/(d-1)}) \| &= \left\| \sum_{k=0}^\infty \sum_{\substack{ \bsj \in \mathbb{N}_{-1}^d \\ |\bsj| = k } } 2^{|\bsj|} \sum_{\bsm \in  \D_{\bsj}} \langle D^N_{\S_d}, h_{\bsj, \bsm} \rangle h_{\bsj,\bsm}  \Bigg| \exp(L^{2/(d-1)}) \right\| \nonumber \\ &\le \sum_{k = 0}^\infty \left\| \sum_{\substack{ \bsj \in \mathbb{N}_{-1}^d \\ |\bsj| = k}} 2^{|\bsj|} \sum_{\bsm \in  \D_{\bsj}} \langle D^N_{\S_d}, h_{\bsj, \bsm} \rangle h_{\bsj,\bsm}  \Bigg| \exp(L^{2/(d-1)}) \right\| \nonumber \\ &\le\sum_{k=0}^\infty  \left\|  \left( \sum_{\substack{ \bsj \in \mathbb{N}_{-1}^d \\ |\bsj | = k }} 2^{2|\bsj|}  \sum_{\bsm \in \D_{\bsj}} | \langle D^N_{\S_d}, h_{\bsj, \bsm} \rangle |^2 \chi_{I_{\bsj, \bsm}} \right)^{1/2}  \Bigg| L_\infty([0,1)^d) \right\|. \label{exp_bound_inf}
\end{align}

To prove a bound on \eqref{exp_bound_inf}, we divide the sum over $k$ into three parts, i.e. we consider the $L_\infty$-norm of the square function where $|\bsj|$ lies in a certain range.

For $\ld N - t/2 \le |\bsj| < \ld N$, we have from Lemma~\ref{le131416} that
\begin{equation*}
|\langle D^N_{\S_d}, h_{\bsj, \bsm} \rangle | \ll 2^{t/2}\,N^{-1}2^{-|\bsj|}
\end{equation*}
if $I_{\bsj, \bsm}$ contains points of $\P_{N,d}$ and
\begin{equation*}
|\langle D^N_{\S_d}, h_{\bsj, \bsm}  \rangle | \ll 2^{-2|\bsj|}
\end{equation*}
otherwise. Hence, for $\ld N - t/2 \le k < \ld N$ we have
\begin{align*}
\sup_{\bsx \in [0,1)^d} \left( \sum_{\substack{ \bsj \in \mathbb{N}_{-1}^d \\ |\bsj | = k }} 2^{2|\bsj|}  \sum_{\bsm \in \D_{\bsj}} | \langle D^N_{\S_d}, h_{\bsj, \bsm} \rangle |^2 \chi_{I_{\bsj, \bsm}} \right)^{1/2}  \ll & \left( \sum_{\substack{ \bsj \in \mathbb{N}_{-1}^d \\ |\bsj | = k }}  \max\left\{ 2^t\,N^{-2} ,  2^{-2 |\bsj|} \right\} \right)^{1/2} \\ \le & \left( 2^{t}\,N^{-2} \sum_{\substack{ \bsj \in \mathbb{N}_{-1}^d \\ |\bsj | = k }} 1 \right)^{1/2} \\ \ll & 2^{t/2} \,N^{-1} (\log N)^{(d-1)/2}.
\end{align*}
Therefore
\begin{align*}
\sum_{\substack{k \in \mathbb{N}_0 \\ \ld N - t/2 \le k < \ld N}}  \left\|  \left( \sum_{\substack{ \bsj \in \mathbb{N}_{-1}^d \\ |\bsj | = k }} 2^{2|\bsj|}  \sum_{\bsm \in \D_{\bsj}} | \langle D^N_{\S_d}, h_{\bsj, \bsm} \rangle |^2 \chi_{I_{\bsj, \bsm}} \right)^{1/2}  \Bigg| L_\infty([0,1)^d) \right\| \\
 \ll  2^t \,N^{-1} (\log N)^{(d-1)/2}.
\end{align*}

Next we consider the case where $0 \le |\bsj| < \ld N - t/2$. Assume that we have
\begin{equation*}
n_{\mu} \le |\bsj| +t/2 < n_{\mu+1},
\end{equation*}
for some $\mu \in \{0, 1, \ldots, r\}$, where we set $n_{r+1} = \ld N$ and $n_0 = 0$. From Lemma~\ref{le131416} we have that
\begin{equation*}
|\langle D^N_{\S_d}, h_{\bsj, \bsm} \rangle | \ll 2^t\,N^{-1} \left(2^{-|\bsj|} + (2 n_{\mu+1} -t-2|\bsj| )^{d-1}2^{-n_{\mu+1}}\right).
\end{equation*}
Hence, for $n_{\mu} \le k + t/2 < n_{\mu+1}$, we have
\begin{align*}
& \sup_{\bsx \in [0,1)^d} \left(\sum_{\substack{\bsj \in \mathbb{N}_{-1}^d \\ |\bsj| = k}} 2^{2|\bsj|} \sum_{\bsm \in \D_{\bsj}} | \langle D^N_{\S_d}, h_{\bsj, \bsm} \rangle |^2 \chi_{I_{\bsj, \bsm}} \right)^{1/2} \\ &\ll 2^t\,N^{-1} \left(\sum_{\substack{\bsj \in \mathbb{N}_{-1}^d \\ |\bsj| = k}} 2^{2|\bsj|} \left(2^{-|\bsj|} + (2 n_{\mu+1} -t-2|\bsj| )^{d-1}2^{-n_{\mu+1}}\right)^2 \right)^{1/2}, 
\end{align*}
and therefore we need to estimate
\begin{align*}
& \sum_{\substack{k \in \mathbb{N}_0 \\ k \le \ld N - t/2 }} \left\|  \left( \sum_{\substack{ \bsj \in \mathbb{N}_{-1}^d \\ |\bsj | = k }} 2^{2|\bsj|}  \sum_{\bsm \in \D_{\bsj}} | \langle D^N_{\S_d}, h_{\bsj, \bsm} \rangle |^2 \chi_{I_{\bsj, \bsm}} \right)^{1/2}  \Bigg| L_\infty([0,1)^d) \right\| \\ \ll & 2^t\,N^{-1} \sum_{\mu=0}^r \sum_{\substack{k \in \mathbb{N}_0 \\ n_{\mu} \le k + t/2 < n_{\mu+1}}} \left( \sum_{\substack{\bsj \in \mathbb{N}_{-1}^d \\ |\bsj| = k}} 2^{2|\bsj|} \left(2^{-|\bsj|} + (2 n_{\mu+1} -t-2|\bsj| )^{d-1}2^{-n_{\mu+1}}\right)^2 \right)^{1/2} \\ \ll & 2^t\,N^{-1} \sum_{\mu=0}^r \sum_{\substack{k \in \mathbb{N}_0 \\ n_{\mu} \le k + t/2 < n_{\mu + 1}}} \left( k^{d-1}  2^{2k } \left(2^{-k} + (2 n_{\mu+1} -t-2 k )^{d-1}2^{-n_{\mu+1}}\right)^2 \right)^{1/2} \\ = &  2^t\,N^{-1} \sum_{\mu=0}^r \sum_{\substack{k \in \mathbb{N}_0 \\ n_{\mu} \le k + t/2 < n_{\mu+1}}} k^{(d-1)/2}  \left(1 + (2 n_{\mu+1} -t-2 k )^{d-1}2^{-n_{\mu+1}-k}\right) \\ \ll & 2^t\,N^{-1}
 \sum_{k=0}^{\lceil \ld N \rceil} k^{(d-1)/2} + 2^t\,N^{-1} r \sum_{\ell=0}^{\lceil \ld N \rceil} (2\ell - t)^{d-1}2^{-\ell}  \\ \ll & 2^t \,N^{-1} (\log N)^{(d+1)/2}.
\end{align*}

It remains to estimate the terms where $k \ge \ld N$. As in \cite{BM15} we treat the volume part of the discrepancy function separately from the counting part.

For the volume part $L(\bsx) = x_1 \cdots x_d$ we have $|\langle L, h_{\bsj, \bsm} \rangle | \asymp 2^{-2 |\bsj|}$. Hence
\begin{align*}
& \left\| \sum_{\substack{k=0 \\ k \ge \ld N }}^\infty \sum_{\substack{ \bsj \in \mathbb{N}_{-1}^d \\ |\bsj| = k } } 2^{|\bsj|} \sum_{\bsm \in  \D_{\bsj}} \langle L, h_{\bsj, \bsm} \rangle h_{\bsj,\bsm}  \Bigg| \exp(L^{2/(d-1)}) \right\| \\ \le & \sum_{\substack{ k = 0 \\ k \ge \ld N } }^\infty \left\| \sum_{\substack{ \bsj \in \mathbb{N}_{-1}^d \\ |\bsj| = k}} 2^{|\bsj|} \sum_{\bsm \in  \D_{\bsj}} \langle L, h_{\bsj, \bsm} \rangle h_{\bsj,\bsm}  \Bigg| \exp(L^{2/(d-1)}) \right\| \\ \le & \sum_{ \substack{ k=0 \\ k \ge \ld N } }^\infty  \left\|  \left( \sum_{\substack{ \bsj \in \mathbb{N}_{-1}^d \\ |\bsj | = k }} 2^{2|\bsj|}  \sum_{\bsm \in \D_{\bsj}} \left| \langle L, h_{\bsj, \bsm} \rangle \right|^2 \chi_{I_{\bsj, \bsm}} \right)^{1/2}  \Bigg| L_\infty([0,1)^d) \right\| \\ \ll & \sum_{ \substack{ k=0 \\ k \ge \ld N } }^\infty   \left( \sum_{\substack{ \bsj \in \mathbb{N}_{-1}^d \\ |\bsj | = k }}  2^{-2|\bsj|}  \right)^{1/2}  \\ \ll & \sum_{\substack{ k = 0 \\ k \ge \ld N}}^\infty k^{(d-1)/2}2^{-k} \\ \ll & 
N^{-1} (\log N)^{(d-1)/2}.
\end{align*}

It remains to estimate the counting part $C(\bsx) =  N^{-1}  \sum_{\bsz \in \P_{N,d}} \chi_{[\bszero, \bsx)}(\bsz)$. Let $n \in \mathbb{N}$ be such that $2^{n-1} < N \le 2^n$. Let again $\mathcal{J}$ be the set of all dyadic intervals $I_{\bsj, \bsm}$ with $|\bsj| \ge n$, i.e. $|I_{\bsj, \bsm}| \le 2^{-n}$, such that $\langle C, h_{\bsj, \bsm} \rangle \neq 0$. As above, this implies that for $I_{\bsj, \bsm} \in \mathcal{J}$ we have that at least one point of $\P_{N,d}$ lies in the interior of $I_{\bsj, \bsm}$.  Hence $j_k \le 2n$, or in other words $|I_{j_k, m_k}| \ge 2^{-2n}$, for each coordinate $k = 1, 2, \ldots, d$.

We also define the unique parent of each $I_{\bsj, \bsm} \in \mathcal{J}$, denoted by $\widetilde{I}_{\bsj, \bsm}$, which satisfies: (i) $I_{\bsj, \bsm} \subseteq \widetilde{I}_{\bsj', \bsm'}$; (ii) $|\bsj'| = n$, i.e. $|\widetilde{I}_{\bsj', \bsm'}| = 2^{-n}$; and (iii) $j_k = j'_k$ (which implies that $I_{j_k, m_k} = \widetilde{I}_{j'_k, m'_k}$) for all $k = 1, 2, \ldots, d$. In other words, to find the parent, we expand the $d$-th side of $I_{\bsj, \bsm}$ to a dyadic interval such that the volume of the resulting interval has volume $2^{-n}$. By reordering the sum over the intervals with respect to their parents we obtain
\begin{equation}\label{eq_sum_C}
\sum_{\substack{\bsj \in \mathbb{N}_{-1}^d \\ |\bsj| \ge n}} 2^{|\bsj|} \sum_{\bsm \in \D_{\bsj}} \langle C, h_{\bsj, \bsm} \rangle h_{\bsj, \bsm} = \sum_{\substack{ \widetilde{I}_{\bsj', \bsm'}: |\bsj'| = n \\ j'_k \le 2n: k= 1, \ldots, d}} \sum_{\substack{ I_{\bsj, \bsm} \subseteq \widetilde{I}_{\bsj', \bsm'} \\ j_k = j'_k: k = 1, \ldots, d-1}} 2^{|\bsj|} \langle C, h_{\bsj, \bsm} \rangle h_{\bsj, \bsm}.
\end{equation}

For an arbitrary, but fixed, parent interval $\widetilde{I}_{\bsj', \bsm'}$ we consider the innermost sum above
\begin{equation}\label{eq_p}
\sum_{ \substack{ I_{\bsj, \bsm} \subseteq \widetilde{I}_{\bsj', \bsm'} \\ j_k = j'_k: k = 1, \ldots, d-1 }} 2^{|\bsj|} \langle C, h_{\bsj, \bsm} \rangle h_{\bsj, \bsm} =  N^{-1}  \sum_{\bsz \in \P_{N,d} \cap \widetilde{I}_{\bsj', \bsm'}} \sum_{\substack{ I_{\bsj,\bsm} \subseteq \widetilde{I}_{\bsj', \bsm'} \\ j_k = j'_k: k = 1, \ldots, d-1}} 2^{|\bsj|} \langle \chi_{[\bsz, \bsone)}, h_{\bsj, \bsm} \rangle h_{\bsj, \bsm}.
\end{equation}
The summands in the last sum split into products of one-dimensional factors
\begin{align*}
2^{|\bsj|} \langle \chi_{[\bsz, \bsone)}, h_{\bsj, \bsm} \rangle h_{\bsj, \bsm}(\bsx) &= \prod_{k=1}^d 2^{j_k} \langle \chi_{[z_k, 1)}, h_{j_k, m_k} \rangle h_{j_k, m_k}(x_k) \\ &= \left( \prod_{k=1}^{d-1} 2^{j'_k} \langle \chi_{[z_k,1)}, h_{j'_k, m'_k} \rangle h_{j'_k, m'_k}(x_k) \right) 2^{j_d} \langle \chi_{[z_d, 1)}, h_{j_d, m_d} \rangle h_{j_d, m_d}(x_d) \\ &= 2^{|\bsj'_\ast|} \langle \chi_{[\bsz_\ast, \bsone)}, h_{\bsj'_\ast, \bsm'_\ast} \rangle 2^{j_d} \langle \chi_{[z_d, 1)}, h_{j_d, m_d} \rangle h_{j_d, m_d}(x_d),
\end{align*}
where by $\ast$ we denote the projection of a $d$-dimensional vector to its first $d-1$ coordinates (i.e., for instance, $\bsz_\ast = (z_1, \ldots, z_{d-1})$ for $\bsz = (z_1, \ldots, z_{d-1}, z_d)$). Thus we can write the innermost sum on the right hand side of \eqref{eq_p} as
\begin{align*}
& \sum_{\substack{ I_{\bsj,\bsm} \subseteq \widetilde{I}_{\bsj', \bsm'} \\ j_k = j'_k: k = 1, \ldots, d-1}} 2^{|\bsj|} \langle \chi_{[\bsz, \bsone)}, h_{\bsj, \bsm} \rangle h_{\bsj, \bsm}(\bsx) \\ &= 2^{|\bsj'_\ast|} \langle \chi_{[\bsz_\ast, \bsone)}, h_{\bsj'_\ast, \bsm'_\ast} \rangle h_{\bsj'_\ast, \bsm'_\ast}(x_\ast) \sum_{I_{j_d, m_d} \subseteq \widetilde{I}_{j'_d, m'_d}} 2^{j_d} \langle \chi_{[z_d, 1)}, h_{j_d, m_d} \rangle h_{j_d, m_d}(x_d).
\end{align*}
The last sum is now the Haar series expansion of the indicator function $\chi_{[z_d, 1)}$ restricted to the interval $\widetilde{I}_{j'_d, m'_d}$ without the constant term, i.e., we have
\begin{equation}\label{eq_haar_i}
\sum_{I_{j_d, m_d} \subseteq \widetilde{I}_{j'_d, m'_d}} 2^{j_d} \langle \chi_{[z_d, 1)}, h_{j_d, m_d} \rangle h_{j_d, m_d}(x_d) = \chi_{\widetilde{I}_{j'_d, m'_d}}(x_d)\ \left(\chi_{[z_d, 1)}(x_d) - 2^{j'_d} \left| [z_d, 1) \cap \widetilde{I}_{j'_d, m'_d} \right| \right).
\end{equation}
Using the triangle inequality it follows that the last expression is bounded pointwise by $2$. Further we have $|2^{|\bsj'_\ast|} \langle \chi_{[\bsz_\ast, \bsone)}, h_{\bsj'_\ast, \bsm'_\ast} \rangle | \le 1$. Since there are at most $2^{\lceil t/2 \rceil}$ points of $\P_{N,d}$ in $\widetilde{I}_{\bsj', \bsm'}$, we obtain that
\begin{equation*}
\sum_{\substack{ I_{\bsj,\bsm} \subseteq \widetilde{I}_{\bsj', \bsm'} \\ j_k = j'_k: k = 1, \ldots, d-1}} 2^{|\bsj|} \langle C, h_{\bsj, \bsm} \rangle h_{\bsj, \bsm}(\bsx) =  N^{-1}  \alpha_{j'_d}(x_d) h_{\bsj'_\ast, \bsm'_\ast}(\bsx_\ast),
\end{equation*}
where $|\alpha_{j'_d}(x_d)| \le 2^{\lceil t/2 \rceil + 1} \ll 1$.

Let now $x_d$ be fixed. Eq. \eqref{eq_haar_i} implies that for a given $(d-1)$-dimensional interval $\widetilde{I}_{\bsj'_\ast, \bsm'_\ast}$ there exists only one $d$-dimensional interval of the form $\widetilde{I}_{\bsj', \bsm'} = \widetilde{I}_{\bsj'_\ast, \bsm'_\ast} \times \widetilde{I}_{j'_d, m'_d}$ with $|\widetilde{I}_{\bsj', \bsm'}| = 2^{-n}$ such that $\alpha_{j'_d}(x_d) \neq 0$. We can now apply \eqref{eq_sum_C} and take the $L_p$-norm in the first $d-1$ variables to obtain
\begin{align*}
& \left\| \sum_{\substack{ \bsj \in \mathbb{N}_{-1}^d \\ |\bsj| \ge n}} 2^{|\bsj|} \sum_{\bsm \in \D_{\bsj}} \langle C, h_{\bsj, \bsm} \rangle h_{\bsj, \bsm} \Bigg| L_p([0,1)^{d-1})  \right\| \\ &=  \left\| \sum_{\substack{ \widetilde{I}_{\bsj', \bsm'} : |\bsj'| = n \\ j'_k \le 2n : k = 1, \ldots, d }} \sum_{\substack{ I_{\bsj, \bsm} \subseteq \widetilde{I}_{\bsj', \bsm'} \\ j_k = j'_k: k = 1, \ldots, d-1  }} 2^{|\bsj|} \langle C, h_{\bsj,\bsm}\rangle h_{\bsj,\bsm}  \Bigg| L_p([0,1)^{d-1}) \right\| \\ &=  N^{-1}  \left\|  \sum_{\substack{ \widetilde{I}_{\bsj'_\ast, \bsm'_\ast}: |\bsj'| = n \\ j'_k \le 2n: k = 1, \ldots, d-1 }} \alpha_{j'_d}(x_d) h_{\bsj'_\ast, \bsm'_\ast}    \Bigg| L_p([0,1)^{d-1} \right\| \\ &\ll  p^{(d-1)/2} \,N^{-1} \left\| \left( \sum_{\substack{ \widetilde{I}_{\bsj'_\ast, \bsm'_\ast}: |\bsj'| = n \\ j'_k \le 2n: k = 1, \ldots, d-1    }}   |\alpha_{j'_d}(x_d)|^2  \chi_{\widetilde{I}_{\bsj'_\ast, \bsm'_\ast}}  \right)^{1/2}    \Bigg| L_p([0,1)^{d-1}) \right\|  \\ &\ll  p^{(d-1)/2} n^{(
d-1)/2}  \,N^{-1} ,
\end{align*}
where in the penultimate step we have employed the $(d-1)$-dimensional Littlewood-Paley inequality (see Lemma~\ref{lem_lp}) and in the last step the fact that the number of choices of $\bsj'_\ast$ is of order $n^{d-1}$ at most.

By integrating this bound with respect to the last variable $x_d$ and using Proposition~\ref{prp_exp_Lp} we obtain
\begin{equation*}
\left\| \sum_{\substack{\bsj \in \mathbb{N}_{-1}^d \\  |\bsj| \ge n}} 2^{|\bsj|} \sum_{\bsm \in \D_{\bsj}} \langle C_{\P_{N,d}}, h_{\bsj, \bsm} \rangle h_{\bsj, \bsm}\Bigg| \exp(L^{2/(d-1)}) \right\| \ll  N^{-1} n^{(d-1)/2} ,
\end{equation*}
from which the result now follows.
\end{proof}

\subsection{The proof of the upper bounds in Theorem~\ref{thm:besov}}\label{sec_besov2}

\begin{proof}[Proof of Eq. \eqref{besov:ubd1} and \eqref{besov:ubd2} in Theorem~\ref{thm:besov}]
According to Proposition~\ref{haarbesovnorm} we have 
\begin{equation}\label{besovdisc}
\|D_{\S_d}^N|S_{p,q}^sB\|^q \ll_{p,q,s,d} \sum_{\bsj\in\N_{-1}^d} 2^{|\bsj|(s-1/p+1)q}\left(\sum_{\bsm\in\D_{\bsj}} |\langle D_{\S_d}^N,h_{\bsj,\bsm}\rangle|^p\right)^{q/p}.
\end{equation}
Let $|\bsj|+t/2\ge \ld N$, then according to Lemma~\ref{le131416} we have
\[ |\langle D_{\S_d}^N,h_{\bsj,\bsm}\rangle| \ll 2^{t/2}\,N^{-1} 2^{-|\bsj|} \]
and for a fixed $\bsj$, at least $2^{|\bsj|}-N$ intervals $I_{\bsj,\bsm}$ contain no points of $\P_{N,d}$ (the point set consisting of the first $N$ elements of $\S_d$) and in such cases we have according to Lemma~\ref{le131416}
\[ |\langle D_{\S_d}^N,h_{\bsj,\bsm}\rangle| \ll 2^{-2|\bsj|}. \]
Now we estimate \eqref{besovdisc} by applying Minkowski's inequality to obtain
\begin{eqnarray}\label{besov_small_int}
\lefteqn{\sum_{|\bsj|+t/2 \geq \ld N} 2^{|\bsj|(s-1/p+1)q}\left(\sum_{\bsm\in\D_{\bsj}} |\langle D_{\S_d}^N,h_{\bsj,\bsm}\rangle|^p\right)^{q/p}}\nonumber\\
&\ll & \sum_{|\bsj|+t/2 \geq \ld N} 2^{|\bsj|(s-1/p+1)q} \left(N 2^{pt/2}\,N^{-p} 2^{-p|\bsj|}\right)^{q/p} \nonumber\\
&& +\sum_{|\bsj|+t/2 \geq \ld N} 2^{|\bsj|(s-1/p+1)q} \left(\max\{2^{|\bsj|}-N, 0\} 2^{-2p|\bsj|}\right)^{q/p}\nonumber \\
&\le& N^{q/p-q} 2^{qt/2} \sum_{|\bsj|+t/2 \geq \ld N} 2^{|\bsj|(s-1/p)q} + \sum_{|\bsj|+t/2 \geq \ld N} 2^{|\bsj|(s-1)q}\nonumber \\
&\ll& N^{q/p-q} 2^{qt/2} N^{(s-1/p)q} 2^{-t/2(s-1/p)q} (\log N)^{d-1} + N^{(s-1)q}2^{-t/2(s-1)q}(\log N)^{d-1}\nonumber \\
&\le& 2^{(1-s+1/p)tq/2} N^{(s-1)q}(\log N)^{d-1}.
\end{eqnarray}
Now let $n_r>\ldots>n_{\mu+1}\ge |\bsj|+t/2>n_\mu>\ldots>n_1$. Then according to Lemma~\ref{le131416} we have
\[ |\langle D_{\S_d}^N,h_{\bsj,\bsm}\rangle| \ll 2^t\,N^{-1} \left(2^{-|\bsj|} +  (2n_{\mu+1} - t - 2|\bsj|)^{d-1}2^{-n_{\mu+1}}\right). \]
We use this bound to estimate the terms of \eqref{besovdisc} for which $|\bsj|+t/2 < \ld N$ (again we set $n_0 = 0, \, n_{r+1} = \ld N$). We use Minkowski's inequality
\begin{eqnarray}\label{besov_large_int1} 
\lefteqn{\sum_{|\bsj|+t/2 < \ld N} 2^{|\bsj|(s-1/p+1)q}\left(\sum_{\bsm\in\D_{\bsj}} |\langle D_{\S_d}^N,h_{\bsj,\bsm}\rangle|^p\right)^{q/p}} \nonumber \\
&= & \sum_{\mu=0}^r \sum_{n_{\mu} \le |\bsj|+t/2 < n_{\mu+1}} 2^{|\bsj|(s-1/p+1)q}\left(\sum_{\bsm\in\D_{\bsj}} |\langle D_{\S_d}^N,h_{\bsj,\bsm}\rangle|^p\right)^{q/p}\nonumber \\
&\ll &  \sum_{\mu=0}^r \sum_{n_{\mu} \le |\bsj|+t/2 < n_{\mu+1}} 2^{|\bsj|(s-1/p+1)q} 2^{q|\bsj|/p} \, 2^{qt}\,N^{-q} \left( 2^{-|\bsj|} +  (2n_{\mu+1} - t - 2|j|)^{d-1}2^{-n_{\mu+1}} \right)^{\lceil q\rceil}\nonumber \\
&= & 2^{qt}\,N^{-q}  \sum_{\mu=0}^r \sum_{n_{\mu} \le |\bsj|+t/2 < n_{\mu+1}} 2^{|\bsj|(s+1)q} \sum_{i=0}^{\lceil q\rceil} 2^{-|\bsj|i}2^{-n_{\mu+1}(\lceil q\rceil-i)}(2n_{\mu+1}-t-2|\bsj|)^{(d-1)(\lceil q\rceil-i)}\nonumber \\
&\leq & 2^{qt}\,N^{-q}  \sum_{\mu=0}^r \sum_{i=0}^{\lceil q\rceil}2^{-n_{\mu+1}(\lceil q\rceil-i)} \sum_{n_{\mu} \le |\bsj|+t/2 < n_{\mu+1}} 2^{|\bsj|(sq+\lceil q\rceil-i)}(2n_{\mu+1}-t-2|\bsj|)^{(d-1)(\lceil q\rceil-i)}
\end{eqnarray}
and obtain from Lemma~\ref{index_dim_red} and Lemma~\ref{index_dim_red_log} for $s>0$
\begin{eqnarray}\label{besov_large_int2}
\lefteqn{\sum_{|\bsj|+t/2 < \ld N} 2^{|\bsj|(s-1/p+1)q}\left(\sum_{\bsm\in\D_{\bsj}} |\langle D_{\S_d}^N,h_{\bsj,\bsm}\rangle|^p\right)^{q/p}} \nonumber \\
&\ll & 2^{qt}\,N^{-q}  \sum_{\mu=0}^r \sum_{i=0}^{\lceil q\rceil}2^{-n_{\mu+1}(\lceil q\rceil-i)} 2^{(n_{\mu+1}-t/2)(sq+\lceil q\rceil-i)} (\log N)^{d-1}\nonumber \\
&\le &  2^{qt}\,N^{-q} 2^{-tsq/2-t\lceil q\rceil/2}  \sum_{\mu=0}^r 2^{n_{\mu+1}sq}\sum_{i=0}^{\lceil q\rceil}2^{ti/2}(\log N)^{d-1}\nonumber \\
&\ll &  2^{qt-tsq/2}\,N^{-q}  2^{n_{r+1}sq} (\log N)^{d-1}\nonumber \\
&= & 2^{tq(1-s/2)} \, N^{(s-1)q}(\log N)^{d-1}.
\end{eqnarray}
The case $s=0$ needs to be handled slightly differently. We continue from \eqref{besov_large_int1} and obtain
\begin{eqnarray}\label{besov_large_int_s0} 
\lefteqn{\sum_{|\bsj|+t/2 < \ld N} 2^{|\bsj|(-1/p+1)q}\left(\sum_{\bsm\in\D_{\bsj}} |\langle D_{\S_d}^N,h_{\bsj,\bsm}\rangle|^p\right)^{q/p}} \nonumber \\
%&\ll & 2^{qt}\,N^{-q}  \sum_{\mu=0}^r \sum_{i=0}^{\lceil q\rceil}2^{-n_{\mu+1}(\lceil q\rceil-i)} \sum_{n_{\mu} \le |\bsj|+t/2 < n_{\mu+1}} 2^{|\bsj|(\lceil q\rceil-i)}(2n_{\mu+1}-t-2|\bsj|)^{(d-1)(\lceil q\rceil-i)}\nonumber \\
&\ll & 2^{qt}\,N^{-q}  \sum_{\mu=0}^r \sum_{i=0}^{\lceil q\rceil-1}2^{-n_{\mu+1}(\lceil q\rceil-i)} \sum_{n_{\mu} \le |\bsj|+t/2 < n_{\mu+1}} 2^{|\bsj|(\lceil q\rceil-i)}(2n_{\mu+1}-t-2|\bsj|)^{(d-1)(\lceil q\rceil-i)}\nonumber \\
&&\quad + 2^{qt}\,N^{-q} \sum_{\mu=0}^r \sum_{n_{\mu} \le |\bsj|+t/2 < n_{\mu+1}} (2n_{\mu+1}-t-2|\bsj|)\nonumber \\
&\ll & 2^{qt} \, N^{-q}(\log N)^d + 2^{qt}N^{-q} \sum_{\mu=0}^r (n_{\mu+1} - n_{\mu}) (\log N)^{d-1}\nonumber \\
&\ll & 2^{qt} \, N^{-q}(\log N)^d.
\end{eqnarray}
%where for the first summand we applied the estimation from \eqref{besov_large_int1}, since the exponent in the inner sum cannot be $0$.
Combining \eqref{besov_small_int}, \eqref{besov_large_int2} and \eqref{besov_large_int_s0} we obtain
\begin{align*}
 \|D_{\S_d}^N | S_{p,q}^sB\|^q \ll_{p,q,s,d} \begin{cases} 2^{tq} N^{-q}\,\left(\log N\right)^{d} & \mbox{ if } s=0, \\ 2^{tq} N^{(s-1)q}\,\left(\log N\right)^{d-1} & \mbox{ if } 0<s<1/p, \end{cases}
\end{align*}
and the result follows.
\end{proof}

\subsection{The proof of the upper bounds in Theorem~\ref{thm:triebel}}

\begin{proof}[Proof of Eq. \eqref{triebel:ubd1} and \eqref{triebel:ubd2} in Theorem~\ref{thm:triebel}]
From Proposition~\ref{prp_emb_BF} we know that \[S_{\max(p,q),q}^s B\hookrightarrow S_{p,q}^s F. \] Thus we have
\[ \|D_{\S_d}^N | S_{p,q}^sF\| \ll \|D_{\S_d}^N | S_{\max(p,q),q}^sB\| \ll \begin{cases} 2^{t} N^{-1}\,\left(\log N\right)^{d/q} & \mbox{ if } s=0, \\ 2^{t} N^{(s-1)}\,\left(\log N\right)^{(d-1)/q} & \mbox{ if } 0<s<1/\max(p,q). \end{cases} \]
\end{proof}

\section{Lower bounds}

In this section we prove the lower bounds on the respective norms of the discrepancy function of arbitrary sequences in the unit cube.

\subsection{The proof of the lower bound in Theorem~\ref{thm:bmo}}\label{sec_bmo2}

To prove Eq.~\eqref{bmo:lowbd} we actually prove a lower bound for a norm close to the $L_2$-norm of $f$ which is dominated by the $\bmo$-norm.
For simplicity of notation, let us consider here the $L_2$-normalized version of the Haar system consisting of the functions 
$g_{\bsj,\bsm} = 2^{\frac{|\bsj|}{2}} h_{\bsj,\bsm}$. Moreover, for a subset ${\cal E}$ of indices $(\bsj,\bsm)$ with $\bsj\in \N_{-1}^d,\bsm\in\D_{\bsj}$
let $P_{\cal E}f$ denote the orthogonal projection of $f \in L_2 ([0,1]^d)$ onto the span of $\{ g_{\bsj,\bsm} \,:\, (\bsj,\bsm) \in {\cal E} \}$
and let $ \| f | {\cal E}\| = \| P_{\cal E}f | L_2 \|$. Then Parseval's equality yields
\[ \| f | {\cal E}\|^2 = \sum_{(\bsj,\bsm) \in {\cal E} } \langle f, g_{\bsj,\bsm} \rangle^2.\]
We also fix the index sets
\[ {\cal D}^d = \big\{ (\bsj,\bsm) \,:\, \bsj\in \N_{-1}^d,\bsm\in\D_{\bsj}\big\} \quad \text{and}\quad
   {\cal D}_0^d = \big\{ (\bsj,\bsm) \,:\, \bsj\in \N_{0}^d,\bsm\in\D_{\bsj}\big\}.\]

Considering $U=[0,1)^d$ in the definition \eqref{eq:defbmo} of the $\bmo$-norm, we obviously have
\[ \| f | \bmo^d \| \ge \| f | {\cal D}_0^d\|.\]
Now the lower bound \eqref{bmo:lowbd} in Theorem~\ref{thm:bmo} follows immediately from the next lemma.

\begin{lem}\label{lbdDpts}
For every $d \in \mathbb{N}$ there exists a positive constant $c_d>0$ with the following property: for every infinite sequence $\S_d$ in $[0,1)^d$ we have \[\|D_{\S_d}^N| {\cal D}_0^d\| \ge c_d  \,N^{-1} (\log N)^{d/2}\ \ \ \mbox{ for infinitely many $N \in \mathbb{N}$.}\]
\end{lem}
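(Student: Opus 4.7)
The plan is to follow the classical strategy of Roth combined with Proinov's lifting trick, adapted to the projection norm $\|\cdot|\mathcal{D}_0^d\|$. Concretely, I would first establish a Roth-type lower bound for finite point sets of $M$ points in $[0,1)^{d+1}$ with respect to $\|\cdot|\mathcal{D}_0^{d+1}\|$, then transfer this via a lifted point set to a weighted sum of $\|D_{\S_d}^N|\mathcal{D}_0^d\|^2$, and finally use an averaging/contradiction argument to extract infinitely many $N$ with the claimed lower bound.

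For the finite-set step I claim that, for every $M$-point set $\mathcal{Q}$ in $[0,1)^{d+1}$,
\[ \|D_{\mathcal{Q}}|\mathcal{D}_0^{d+1}\|^2 \gg_d M^{-2}(\log M)^d. \]
This is just Roth's orthogonal-function argument restricted to indices in $\mathbb{N}_0^{d+1}$. I would choose $n$ with $2^n \asymp M$ and $2^n \ge 2M$; for any $\bsj \in \mathbb{N}_0^{d+1}$ with $|\bsj|=n$, at least $2^n-M\ge M/2$ of the boxes $I_{\bsj,\bsm}$ contain no point of $\mathcal{Q}$, and for each such empty box Lemmas~\ref{lem_haar_coeff_vol} and \ref{lem_haar_coeff_number} give $|\langle D_{\mathcal{Q}},h_{\bsj,\bsm}\rangle| \asymp 2^{-2|\bsj|}$ (here the condition $j_i\ge 0$ in every coordinate is essential so that the volume part $x_0 x_1\cdots x_d$ produces a nonvanishing Haar coefficient). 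Summing, $2^{|\bsj|}\sum_{\bsm}|\langle D_{\mathcal{Q}},h_{\bsj,\bsm}\rangle|^2 \gg 2^{-2|\bsj|}\asymp M^{-2}$, and there are $\binom{n+d}{d}\asymp n^d$ such $\bsj$, giving the bound.

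Now I would apply this to the lifted point set $\widehat{\P}_M=\{(n/M,\bsx_n):0\le n<M\}$. A direct computation shows that for $y\in (n/M,(n+1)/M)$
\[ D_{\widehat{\P}_M}(y,\bsx) = \frac{n+1}{M}\,D_{\S_d}^{n+1}(\bsx) + \Big(\frac{n+1}{M}-y\Big)\,x_1\cdots x_d, \]
with $|\tfrac{n+1}{M}-y|\le 1/M$. The Haar functions $g_{j_0,m_0}$ with $j_0\ge 0$ form an orthonormal (but incomplete) subset of $L_2([0,1))$, so Bessel's inequality in the first variable yields $\|F|\mathcal{D}_0^{d+1}\|^2 \le \int_0^1 \|F(y,\cdot)|\mathcal{D}_0^d\|^2\,dy$. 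Using this, the triangle inequality, and the elementary fact that $\|x_1\cdots x_d|\mathcal{D}_0^d\|^2=O_d(1)$ (since the Haar coefficients decay like $2^{-2|\bsj|}$), I obtain
\[ \|D_{\widehat{\P}_M}|\mathcal{D}_0^{d+1}\|^2 \ll_d \frac{1}{M^3}\sum_{N=1}^{M} N^2\,\|D_{\S_d}^N|\mathcal{D}_0^d\|^2 + \frac{1}{M^2}. \]

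Combining this upper bound with the Roth-type lower bound applied to $\widehat{\P}_M$ gives, for all sufficiently large $M$, $\sum_{N=1}^M N^2 \|D_{\S_d}^N|\mathcal{D}_0^d\|^2 \gg_d M(\log M)^d$. A standard contradiction argument then finishes the proof: if only finitely many $N$ satisfied $\|D_{\S_d}^N|\mathcal{D}_0^d\|^2 \ge c\,(\log N)^d/N^2$ (for a small enough $c>0$), then $\sum_{N=1}^M N^2\|D_{\S_d}^N|\mathcal{D}_0^d\|^2 \le C_0 + c\sum_{N\le M}(\log N)^d \le c'\,M(\log M)^d$ for arbitrarily small $c'$, contradicting the lower bound. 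Hence infinitely many $N$ satisfy $\|D_{\S_d}^N|\mathcal{D}_0^d\|\gg_d N^{-1}(\log N)^{d/2}$. The main technical point to get right is the Bessel-type inequality in the first coordinate tied precisely to the restriction $j_0\ge 0$ — this is what transforms a Roth bound in dimension $d+1$ (with exponent $d$ in the logarithm) into the sequence bound in dimension $d$; the rest of the argument is essentially bookkeeping.
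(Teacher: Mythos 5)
Your proof is correct and follows the same route as the paper's: a Roth-type lower bound for the ${\cal D}_0^{d+1}$-norm of finite point sets in dimension $d+1$, Proinov's lifting of the sequence to such a point set via an extra equispaced coordinate, a Bessel-type inequality in that coordinate to pass back to the $d$-dimensional sections, and a contradiction argument to extract infinitely many $N$. The only differences are in execution: you run the Proinov step in averaged form (integrating over the extra coordinate to bound $\sum_{N\le M}N^2\|D_{\S_d}^N|{\cal D}_0^d\|^2$ from below) where the paper extracts the maximizing index $n$ and uses the supremum bound of Lemma~\ref{le0}, and you prove the finite-point-set bound directly by Roth's empty-box argument where the paper cites it as Proposition~\ref{lbdBMOpts} from \cite{BLPV09,BM15}.
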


For the proof we combine the following lemma with the method of Proinov.

\begin{lem}\label{le0}
 Let $f:[0,1]^{d+1} \rightarrow \mathbb{R}$ be in $L_2([0,1]^d)$. 
 Then we have \[\|f|{\cal D}_0^{d+1}\| \le \sup_{t_{d+1} \in [0,1]} \|f(\cdot,t_{d+1})|{\cal D}_0^d\|.\] 
\end{lem}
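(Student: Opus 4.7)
The plan is to separate the role of the last variable $x_{d+1}$ from the first $d$ variables using Fubini, and then apply Bessel's inequality to the one-dimensional Haar system in the last variable.

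First I would exploit the tensor product structure of the Haar basis. For $(\bsj,\bsm) \in {\cal D}_0^{d+1}$ write $\bsj = (\bsj', j_{d+1})$ with $\bsj' \in \N_0^d$, $j_{d+1} \in \N_0$, and similarly for $\bsm$. Then $g_{\bsj,\bsm}(\bsx',x_{d+1}) = g_{\bsj',\bsm'}(\bsx')\,g_{j_{d+1},m_{d+1}}(x_{d+1})$. Setting
\[
c_{\bsj',\bsm'}(x_{d+1}) := \langle f(\cdot,x_{d+1}),\,g_{\bsj',\bsm'}\rangle_{L_2([0,1]^d)},
\]
Fubini's theorem gives $\langle f,g_{\bsj,\bsm}\rangle = \langle c_{\bsj',\bsm'},g_{j_{d+1},m_{d+1}}\rangle_{L_2([0,1])}$.

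Next, for fixed $(\bsj',\bsm')$, I would apply Bessel's inequality to the orthonormal system $\{g_{j_{d+1},m_{d+1}} : j_{d+1}\in\N_0,\,m_{d+1}\in\D_{j_{d+1}}\}$ in $L_2([0,1])$ (this system is orthonormal but not complete — it omits the constant function $h_{-1,0}$ — which is exactly why the inequality is not an equality, but Bessel still yields the upper bound we need):
\[
\sum_{j_{d+1}\in\N_0}\sum_{m_{d+1}\in\D_{j_{d+1}}} |\langle c_{\bsj',\bsm'},g_{j_{d+1},m_{d+1}}\rangle|^2 \le \|c_{\bsj',\bsm'}\|_{L_2([0,1])}^2 = \int_0^1 |c_{\bsj',\bsm'}(x_{d+1})|^2\,\mathrm{d}x_{d+1}.
\]

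Finally I would sum over $(\bsj',\bsm')\in{\cal D}_0^d$, interchange sum and integral by Tonelli, and recognize the inner sum as $\|f(\cdot,x_{d+1})|{\cal D}_0^d\|^2$:
\[
\|f|{\cal D}_0^{d+1}\|^2 \le \int_0^1 \sum_{(\bsj',\bsm')\in{\cal D}_0^d} |c_{\bsj',\bsm'}(x_{d+1})|^2\,\mathrm{d}x_{d+1} = \int_0^1 \|f(\cdot,x_{d+1})|{\cal D}_0^d\|^2\,\mathrm{d}x_{d+1}.
\]
Bounding the integrand pointwise by the supremum over $t_{d+1}\in[0,1]$ and taking square roots yields the claim. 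There is no real obstacle here; the only point requiring mild care is noting that ${\cal D}_0$ (with $\bsj\in\N_0^d$, not $\N_{-1}^d$) excludes the coefficients against constants, so the one-dimensional Bessel inequality must indeed be used in its inequality form rather than Parseval.
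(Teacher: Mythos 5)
Your proof is correct and is essentially the paper's argument: both rest on the tensor structure of the Haar system, Fubini, and orthogonality of the one-dimensional Haar functions in the last variable, followed by bounding the $t_{d+1}$-integral by the supremum. The only cosmetic difference is that the paper phrases your Bessel step as first enlarging the index set to ${\cal E}^{d+1}=\{(\bsj,\bsm): \bsj\in\N_0^d\times\N_{-1}\}$ and proving the exact Parseval identity $\|f|{\cal E}^{d+1}\|^2=\int_0^1\|f(\cdot,t_{d+1})|{\cal D}_0^d\|^2\dint t_{d+1}$, the dropped $j_{d+1}=-1$ terms being precisely the deficit in your Bessel inequality.
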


\begin{proof}
 We are going to show the identity
 \begin{equation}\label{eq:idenorm}
  \int_0^1 \|f(\cdot,t_{d+1})|{\cal D}_0^d\|^2 \dint t_{d+1} = \| f | {\cal E}^{d+1}\|^2,
 \end{equation}
 where 
 \[ {\cal E}^{d+1} = \big\{ (\bsj,\bsm) \,:\, \bsj\in \N_{0}^d \times \N_{-1},\bsm\in\D_{\bsj}\big\}.\]
 Then the claimed inequality is a consequence of \eqref{eq:idenorm} together with the obvious inequalities
 \[ \|f|{\cal D}_0^{d+1}\| \le \| f | {\cal E}^{d+1}\| 
  \quad \text{and} \quad 
  \int_0^1 \|f(\cdot,t_{d+1})|{\cal D}_0^d\|^2 \dint t_{d+1} \le \sup_{t_{d+1} \in [0,1]} \|f(\cdot,t_{d+1})|{\cal D}_0^d\|^2. \]
For the proof of \eqref{eq:idenorm}, let 
\[ f = \sum_{(\bsj,\bsm) \in {\cal D}^{d+1}} a_{\bsj,\bsm} g_{\bsj,\bsm} \]
be the Haar decomposition of $f$ which converges in $L_2$ and, therefore, almost everywhere.
For each $(\bsj,\bsm) \in {\cal D}^{d+1}$ with $\bsj=(j_1,\dots,j_d,j_{d+1})$ and $\bsm=(m_1,\dots,m_d,m_{d+1})$, 
we let $\bsj_*=(j_1,\dots,j_d)$ and $\bsm_*=(m_1,\dots,m_d)$.
Then
\[ f(\cdot,t_{d+1}) = \sum_{(\bsj_*,\bsm_*) \in {\cal D}^{d}} b_{\bsj_*,\bsm_*}(t_{d+1}) g_{\bsj_*,\bsm_*} \]
in $L_2([0,1]^d)$ for almost every $t_{d+1}\in [0,1]$, where
\[ b_{\bsj_*,\bsm_*} (t_{d+1}) = \sum_{(j_{d+1},m_{d+1})\in {\cal D}^1} a_{\bsj,\bsm} g_{j_{d+1},m_{d+1}}(t_{d+1}).\]
It follows that
\[ \|f(\cdot,t_{d+1})|{\cal D}_0^d\|^2 = \sum_{(\bsj_*,\bsm_*) \in {\cal D}_0^{d}} b_{\bsj_*,\bsm_*}(t_{d+1})^2.\]
By integrating over $t_{d+1}\in [0,1]$ and using the orthogonality of the one-dimensional Haar system we finally conclude that
\begin{align*} 
  \int_0^1 \|f(\cdot,t_{d+1})|{\cal D}_0^d\|^2 \dint t_{d+1} 
    &= \sum_{(\bsj_*,\bsm_*) \in {\cal D}_0^{d}} \int_0^1 b_{\bsj_*,\bsm_*}(t_{d+1})^2 \dint t_{d+1}\\
     &= \sum_{(\bsj_*,\bsm_*) \in {\cal D}_0^{d}} \, \sum_{(j_{d+1},m_{d+1)}\in {\cal D}^1} a_{\bsj,\bsm}^2 \\
&= \| f | {\cal E}^{d+1}\|^2.
\end{align*}
\end{proof}

Additionally, we need the following lower bound for finite point sets.

\begin{prp}\label{lbdBMOpts}
For every $d \in \mathbb{N}$ there exists a positive constant $c_d>0$ with the following property: for every integer $N \ge 2$ and every $N$-element point set $\P_{N,d}$ in $[0,1)^d$ we have \[\|D_{\P_{N,d}}|{\cal D}_0^d\| \ge c_d  \,N^{-1} (\log N)^{(d-1)/2}.\]  
\end{prp}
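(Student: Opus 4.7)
The plan is to adapt Roth's classical $L_2$-discrepancy lower bound, observing that his test-function argument only activates Haar coefficients at multi-indices $\bsj \in \mathbb{N}_0^d$ of a single prescribed order $|\bsj|=n$, and hence is already tailored to the seminorm $\|\cdot\,|\,\mathcal{D}_0^d\|$.

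First I would choose $n = \lceil \ld(2N)\rceil$, so that $2^n \ge 2N$ and $n \asymp \log N$. Using the definition of $\|\cdot\,|\,\mathcal{D}_0^d\|$ via the $L_2$-normalized Haar system together with Parseval's identity, it suffices to show
\[
\sum_{\substack{\bsj \in \mathbb{N}_0^d \\ |\bsj|=n}} 2^{|\bsj|}\sum_{\bsm \in \D_{\bsj}} |\langle D_{\P_{N,d}}, h_{\bsj,\bsm}\rangle|^2 \gg_d \frac{(\log N)^{d-1}}{N^2},
\]
since this single level of the expansion is dominated by $\|D_{\P_{N,d}}\,|\,\mathcal{D}_0^d\|^2$.

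Second, I would restrict to a fixed $\bsj \in \mathbb{N}_0^d$ with $|\bsj|=n$. The dyadic boxes $\{I_{\bsj,\bsm}\}_{\bsm \in \D_{\bsj}}$ partition $[0,1)^d$ into $2^n$ sets, each of volume $2^{-n}$. Each of the $N$ points of $\P_{N,d}$ lies in the interior of at most one such box, so at least $2^n - N \ge 2^{n-1}$ boxes contain no point of $\P_{N,d}$ in their interior. For such an empty box, Lemma~\ref{lem_haar_coeff_number} forces $\langle \chi_{[\bszero,\cdot)}(\bsz_k), h_{\bsj,\bsm}\rangle = 0$ for every $k$, hence $\langle D_{\P_{N,d}}, h_{\bsj,\bsm}\rangle = -\langle L, h_{\bsj,\bsm}\rangle$ with $L(\bsx)=x_1\cdots x_d$, which by Lemma~\ref{lem_haar_coeff_vol} has absolute value comparable to $2^{-2n}$. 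Summing over the $\ge 2^{n-1}$ empty boxes gives $\sum_{\bsm\in\D_{\bsj}} |\langle D_{\P_{N,d}}, h_{\bsj,\bsm}\rangle|^2 \gg 2^{-3n}$.

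Third, I would invoke the elementary combinatorial fact that the number of compositions of $n$ into $d$ nonnegative parts is $\binom{n+d-1}{d-1} \gg_d n^{d-1}$ whenever $n \ge d-1$. Multiplying by $2^{|\bsj|} = 2^n$ and summing over these $\gg_d n^{d-1}$ choices of $\bsj$ yields
\[
\|D_{\P_{N,d}}\,|\,\mathcal{D}_0^d\|^2 \;\ge\; \sum_{\substack{\bsj \in \mathbb{N}_0^d \\ |\bsj|=n}} 2^n \cdot (c\, 2^{-3n}) \;\gg_d\; n^{d-1} 2^{-2n} \;\asymp\; \frac{(\log N)^{d-1}}{N^2},
\]
and taking square roots gives the bound. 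The main obstacle is minor bookkeeping: the combinatorial estimate requires $n \ge d-1$, so the finitely many small values of $N$ (where $n < d-1$) must be handled by choosing the constant $c_d$ small enough, and one must carefully distinguish between points in the interior versus on the dyadic boundary of the boxes, which is resolved by phrasing the empty-box count in terms of interiors as above.
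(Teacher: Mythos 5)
Your proposal is correct and is essentially the same argument as the one the paper relies on: the paper disposes of Proposition~\ref{lbdBMOpts} by citing \cite{BLPV09,BM15}, whose lower bound is obtained precisely by taking $U=[0,1)^d$ and running Roth's Haar-coefficient estimate on the single level $|\bsj|=n$ with $2N\le 2^n<4N$, using that empty dyadic boxes force $|\langle D_{\P_{N,d}},h_{\bsj,\bsm}\rangle|\asymp 2^{-2|\bsj|}$. Indeed, the paper carries out this identical computation itself in the proof of Lemma~\ref{lemsge0} for the Besov norm, so your write-up just makes explicit for $\|\cdot\,|\,{\cal D}_0^d\|$ what the paper outsources to the references.
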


This has been shown in \cite{BLPV09} for $d=2$ and \cite{BM15} for $d \ge 3$ with the $\bmo^d$-norm instead of the ${\cal D}_0^d$-norm. 
But the proofs actually work literally for the ${\cal D}_0^d$-norm. The proof of the following result relies on Proinov's idea.

\begin{lem}\label{le1}
For $d\in \mathbb{N}$ let $\S_d=(\bsy_k)_{k\ge 0}$, where $\bsy_k=(y_{1,k},\ldots,y_{d,k})$ for $k \in \mathbb{N}_0$, be an arbitrary sequence in $[0,1)^d$. 
Then for every $N\in \mathbb{N}$ there exists an $n \in \{1,2,\ldots,N\}$ such that 
\[n \|D_{\S_d}^n|{\cal D}_0^d\| +1 \ge N \|D_{\P_{N,d+1}}|{\cal D}_0^{d+1}\|\] 
where $\P_{N,d+1}$ is the finite point set in $[0,1)^{d+1}$ consisting of the points 
\[\bsx_k=(y_{1,k},\ldots,y_{d,k},k/N) \ \ \mbox{ for }\ k=0,1,\ldots ,N-1.\]
\end{lem}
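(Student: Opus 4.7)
The plan is to use the classical idea of Proinov, coupled with Lemma~\ref{le0}, to pass from a $(d+1)$-dimensional finite point set to initial segments of the $d$-dimensional sequence $\S_d$. First I would write the slice $D_{\P_{N,d+1}}(\cdot,t)$ explicitly in terms of an appropriate $D_{\S_d}^n$. For $t\in(0,1]$ there is a unique $n\in\{1,\ldots,N\}$ with $(n-1)/N<t\le n/N$, and then the indices $k$ with $k/N<t$ are exactly $k=0,1,\ldots,n-1$. A direct computation shows
\[
D_{\P_{N,d+1}}(\bsx,t) \;=\; \frac{n}{N}\, D_{\S_d}^n(\bsx) \;+\; L(\bsx)\left(\frac{n}{N}-t\right),
\]
where $L(\bsx)=x_1\cdots x_d$ and $|n/N-t|\le 1/N$.

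Next, by Lemma~\ref{le0} applied to $f=D_{\P_{N,d+1}}$ (as a function of the first $d$ coordinates parametrised by $t_{d+1}=t$), we have
\[
\|D_{\P_{N,d+1}}\,|\,\mathcal{D}_0^{d+1}\| \;\le\; \sup_{t\in[0,1]}\|D_{\P_{N,d+1}}(\cdot,t)\,|\,\mathcal{D}_0^{d}\|.
\]
Using the triangle inequality for the $\mathcal{D}_0^{d}$-seminorm (which is the $L_2$-norm of an orthogonal projection), the decomposition above gives
\[
\|D_{\P_{N,d+1}}(\cdot,t)\,|\,\mathcal{D}_0^{d}\| \;\le\; \frac{n}{N}\,\|D_{\S_d}^n\,|\,\mathcal{D}_0^d\| + \frac{1}{N}\,\|L\,|\,\mathcal{D}_0^d\|.
\]
To control the last term by $1/N$, I would note that projecting onto a subspace is contractive, so $\|L\,|\,\mathcal{D}_0^d\|\le\|L\,|\,L_2([0,1]^d)\|=3^{-d/2}\le 1$.

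Combining these estimates and multiplying by $N$ yields, for every $t\in(0,1]$,
\[
N\,\|D_{\P_{N,d+1}}(\cdot,t)\,|\,\mathcal{D}_0^d\| \;\le\; n\,\|D_{\S_d}^n\,|\,\mathcal{D}_0^d\| + 1,
\]
with the case $t=0$ being trivial since $D_{\P_{N,d+1}}(\cdot,0)\equiv 0$. Taking the supremum over $t$ on the left and choosing $n\in\{1,\ldots,N\}$ that maximises $n\,\|D_{\S_d}^n\,|\,\mathcal{D}_0^d\|$ on the right then gives the desired inequality. I do not foresee any serious obstacle: the only subtlety is extracting the correction term $L(\bsx)(n/N-t)$ cleanly and observing that the trivial bound $\|L\,|\,\mathcal{D}_0^d\|\le 1$ is what turns the a-priori dimension-dependent constant into the clean additive $+1$ in the statement.
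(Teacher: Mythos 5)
Your proof is correct and follows essentially the same route as the paper: the same decomposition $N D_{\P_{N,d+1}}(\bsx,t) = n\, D_{\S_d}^n(\bsx) + x_1\cdots x_d\,(n-Nt)$ with $n=\lceil Nt\rceil$, combined with Lemma~\ref{le0} and the choice of $n$ maximizing $k\|D_{\S_d}^k|\mathcal{D}_0^d\|$. The only immaterial differences are that you apply Lemma~\ref{le0} before the triangle inequality rather than after, and that you bound the correction term via $\|L|L_2\|=3^{-d/2}\le 1$ instead of the pointwise bound $|\varphi|\le 1$ used in the paper.
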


\begin{proof}
Choose $n \in \{1,2,\ldots,N\}$ such that 
\begin{equation}\label{choicn}
\|n D_{\S_d}^n|{\cal D}_0^d\| =\max_{k=1,2,\ldots,N} \|kD_{\S_d}^k|{\cal D}_0^d\|.
\end{equation}
Consider a sub-interval of the $(d+1)$-dimensional unit cube of the form $E=\prod_{i=1}^{d+1}[0,t_i)$ with $\bst=(t_1,t_2,\ldots,t_{d+1}) \in [0,1)^{d+1}$ and put \[m=m(t_{d+1}):=\lceil N t_{d+1}\rceil \in \{1,2,\ldots,N\}.\] Then a point $\bsx_k$, $k=0,1,\ldots, N-1$, belongs to $E$, if and only if $\bsy_k \in \prod_{i=1}^d[0,t_i)$ and $0 \le k < N t_{d+1}$. Denoting $E_*=\prod_{i=1}^d[0,t_i)$ we have \[\sum_{\bsz \in \P_{N,d+1}} \chi_E(\bsz)=\sum_{\bsy \in \P_{m,d}} \chi_{E_*}(\bsy),\] where $\P_{m,d}$ is the point set which consists of the first $m$ terms of the sequence $\S_d$. Therefore we have
\begin{align*}
N D_{\P_{N,d+1}}(\bst) &=  \sum_{\bsz \in \P_{N,d+1}} \chi_E(\bsz) -N  t_1 t_2\cdots t_{d+1}\\
&=  \sum_{\bsy \in \P_{m,d}} \chi_{E_*}(\bsy) - m t_1 t_2 \cdots t_d + m t_1 t_2 \cdots t_d-N  t_1 t_2\cdots t_{d+1}\\
&=  m D_{\P_{m,d}}(\bst_*)+ t_1 t_2 \cdots t_d (m-N t_{d+1}),
\end{align*}
where $\bst_*=(t_1,\ldots,t_d)$.
With 
\[ \varphi(\bst) = t_1 t_2 \cdots t_d (m-N t_{d+1}) = t_1 t_2 \cdots t_d (m(t_{d+1})-N t_{d+1})\]
we obtain
\begin{align}\label{formle1}
N D_{\P_{N,d+1}}(\bst) = m D_{\P_{m,d}}(\bst_*) + \varphi(\bst).
\end{align} 
From the definition of $m$ it is clear that $|m-N t_{d+1}| \le 1$ and, therefore, also $| \varphi(\bst) | \le 1$.
It follows that
\begin{align*} 
    N \|D_{\P_{N,d+1}}|{\cal D}_0^{d+1}\| &\le   \|m D_{\S_d}^m |{\cal D}_0^{d+1}\| + \| \varphi | {\cal D}_0^{d+1} \| \\
        &\le   \|m D_{\S_d}^m |{\cal D}_0^{d+1}\| + \| \varphi | L_2 \| \\
&\le   \|m D_{\S_d}^m |{\cal D}_0^{d+1}\| + 1.
\end{align*}
Using Lemma~\ref{le0} we obtain \[\|m(\cdot) D_{\S_d}^{m(\cdot)}|{\cal D}_0^{d+1}\|+1 \le \sup_{t_{d+1} \in [0,1]} \|m(t_{d+1}) D_{\S_d}^{m(t_{d+1})}|{\cal D}_0^d\|+1 \le \|n D_{\S_d}^n | {\cal D}_0^d\|+1.\]
This implies the result.
\end{proof}

Now we can give the proof of Lemma~\ref{lbdDpts} which also finishes the proof of Eq.~\eqref{bmo:lowbd} in Theorem~\ref{thm:bmo}.

\begin{proof}[Proof of Lemma~\ref{lbdDpts}]
We use the notation from Lemma~\ref{le1}. For the finite point set $\P_{N,d+1}$ in $[0,1)^{d+1}$ we obtain from Proposition~\ref{lbdBMOpts} that 
\[N \|D_{\P_{N,d+1}}|{\cal D}_0^{d+1}\| \ge c_{d+1} (\log N)^{\frac{d}{2}}\] 
for some real $c_{d+1}>0$ which is independent of $N$. According to Lemma~\ref{le1} there exists an $n \in \{1,2,\ldots,N\}$ such that
\begin{equation}\label{eq1}
n \| D_{\S_d}^n | {\cal D}_0^d\| + 1 \ge N \|D_{\P_{N,d+1}}|{\cal D}_0^{d+1}\| \ge c_{d+1} (\log N)^{\frac{d}{2}}.
\end{equation}
Thus we have shown that for every  $N \in \mathbb{N}$ there exists an $n \in \{1,2,\ldots,N\}$ such that
\begin{equation}\label{eq2}
n \| D_{\S_d}^n | {\cal D}_0^d \| + 1 \ge c_{d+1} (\log n)^{\frac{d}{2}}.
\end{equation}
It remains to show that \eqref{eq2} holds for infinitely many $n \in \mathbb{N}$. 
Assume to the contrary that \eqref{eq2} holds for finitely many $n \in \mathbb{N}$ only and let $m$ be the largest integer with this property. 
Then choose $N \in \mathbb{N}$ large enough such that 
\[c_{d+1} (\log N)^{\frac{d}{2}} > \max_{k=1,2,\ldots,m} k \|D_{\S_d}^k|{\cal D}_0^d\|+1.\] 
For this $N$ we can find an $n \in \{1,2,\ldots,N\}$ for which \eqref{eq1} and \eqref{eq2} hold true. 
However, \eqref{eq1} implies that $n > m$ which leads to a contradiction since $m$ is the largest integer such that \eqref{eq2} is true. 
Thus we have shown that \eqref{eq2} holds for infinitely many $n \in \mathbb{N}$ and this completes the proof.
\end{proof}

\subsection{The proof of the lower bounds in Theorems \ref{thm:besov} and \ref{thm:triebel}}

Before we give the proof we collect some auxiliary results.

\begin{lem} \label{lemsge0}
 Let $p,q\in(0,\infty)$ and $0\le s<1/p$. For every $d\in\N$ there exists a positive constant $c_{p,q,s,d}>0$ with the following property: for every infinite sequence $\S_d$ in $[0,1)^d$ and every $N\in\N$ we have 
\[ \|D_{\S_d}^N|S_{p,q}^s B([0,1)^d)\| \ge c_{p,q,s,d} \, N^{s-1} (\log N)^{(d-1)/q}. \]
\end{lem}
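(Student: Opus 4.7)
My plan is to adapt the classical Roth argument, phrased in terms of Haar coefficients, to the Besov setting; no Proinov-type trick is needed here since the bound is only for the ``static'' finite point set $\P_{N,d}\subseteq[0,1)^d$ consisting of the first $N$ terms of $\S_d$. I would invoke the Haar coefficient characterization of Proposition~\ref{haarbesovnorm}, which is valid whenever $1/p-1<s<\min(1/p,1)$; for $p\ge 1$ this range contains the whole interval $[0,1/p)$ except for the single boundary point $p=1,\,s=0$, which needs separate handling.

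Concretely, fix $n=n(N)\in\N$ to be the smallest integer with $2^n\ge 2N$, so that $n\asymp\log N$ and $2^n\asymp N$. The idea is to lower bound the Besov norm by restricting the Haar coefficient sum in Proposition~\ref{haarbesovnorm} to a single resolution level: the multi-indices $\bsj\in\N_0^d$ with $|\bsj|=j_1+\cdots+j_d=n$, of which there are $\binom{n+d-1}{d-1}\asymp n^{d-1}$. For each such $\bsj$ the $2^n$ dyadic intervals $I_{\bsj,\bsm}$ partition $[0,1)^d$, but $\P_{N,d}$ contains at most $N\le 2^{n-1}$ points, so at least $2^{n-1}$ of the intervals $I_{\bsj,\bsm}$ contain no point of $\P_{N,d}$ in their interior.

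For such an empty dyadic interval, Lemma~\ref{lem_haar_coeff_number} forces the Haar coefficient of the counting part of $D_{\S_d}^N$ to vanish, so that $\langle D_{\S_d}^N,h_{\bsj,\bsm}\rangle=-\langle L,h_{\bsj,\bsm}\rangle$ with $L(\bsx)=x_1\cdots x_d$, and Lemma~\ref{lem_haar_coeff_vol} gives $|\langle D_{\S_d}^N,h_{\bsj,\bsm}\rangle|\asymp 2^{-2n}$. Summing over the at least $2^{n-1}$ empty $\bsm$ yields
\[
\sum_{\bsm\in\D_{\bsj}}|\langle D_{\S_d}^N,h_{\bsj,\bsm}\rangle|^p\;\gg\;2^{n-1}\cdot 2^{-2np}\;\asymp\;2^{n(1-2p)}.
\]
Plugging this into Proposition~\ref{haarbesovnorm} and summing over the $\asymp n^{d-1}$ multi-indices $\bsj$ with $|\bsj|=n$ gives
\[
\|D_{\S_d}^N\,|\,S_{p,q}^s B([0,1)^d)\|^q\;\gg\;n^{d-1}\cdot 2^{nq(s-1/p+1)}\cdot 2^{nq(1/p-2)}\;=\;n^{d-1}\cdot 2^{nq(s-1)},
\]
and using $n\asymp\log N$, $2^n\asymp N$ together with a $q$-th root yields the claimed bound $N^{s-1}(\log N)^{(d-1)/q}$.

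The only real obstacle is the parameter range: the Haar characterization demands the \emph{strict} inequality $s>1/p-1$, so for $p\ge 1$ the one genuinely problematic configuration is $p=1,\,s=0$. There I would either pass to the limit $s\downarrow 0$ using the monotonicity of Besov norms in $s$, or invoke the continuous embedding $S_{1,q}^0 B([0,1)^d)\hookrightarrow L_1([0,1)^d)$ combined with a Hal\'asz-type $L_1$-discrepancy lower bound; either route delivers the same order $N^{-1}(\log N)^{(d-1)/q}$ and closes the argument.
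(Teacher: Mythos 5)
Your main argument coincides with the paper's proof of Lemma~\ref{lemsge0}: choose $n$ with $2N\le 2^n<4N$, restrict the sum in Proposition~\ref{haarbesovnorm} to the $\asymp n^{d-1}$ indices $\bsj\in\N_0^d$ with $|\bsj|=n$, note that at least $2^{n-1}$ of the boxes $I_{\bsj,\bsm}$ contain no point of $\P_{N,d}$ so that Lemmas~\ref{lem_haar_coeff_number} and~\ref{lem_haar_coeff_vol} give $|\langle D_{\S_d}^N,h_{\bsj,\bsm}\rangle|\asymp 2^{-2n}$ there, and compute; your exponent bookkeeping is correct and identical to the paper's.

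Where you go beyond the paper is the boundary case $p=1$, $s=0$, and there both of your patches fail. The elementary embedding between Besov scales reads $S_{p,q}^{s}B\hookrightarrow S_{p,q}^{0}B$ for $s>0$, i.e.\ $\|f\,|\,S_{p,q}^{0}B\|\ll\|f\,|\,S_{p,q}^{s}B\|$, so a lower bound on the $S^s$-norm transfers in the \emph{wrong} direction, and in any case your constant $c_{p,q,s,d}$ is not controlled as $s\downarrow 0$. The Hal\'asz route is also insufficient: the $L_1$ lower bound of order $N^{-1}(\log N)^{1/2}$ is known only for $d=2$, and even there it cannot produce the exponent $(d-1)/q$ when $q<2$. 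To be fair, the paper is itself silent on this corner (it applies Proposition~\ref{haarbesovnorm} without noting that $p=1$, $s=0$ violates the strict inequality $s>1/p-1$, and likewise $p<1$ with $s\le 1/p-1$ is outside the characterization's range even though the lemma nominally allows $p\in(0,\infty)$); but in the proof of Theorem~\ref{thm:besov} the lemma is only ever invoked for $s>0$, the case $s=0$ being handled by the separate Proinov-type argument via Proposition~\ref{besovfinitetriebel} and Lemma~\ref{proinovbesov}. So your instinct to flag the corner case is sound, but the honest resolution is to exclude it from the lemma or route it through that $s=0$ machinery, not the two arguments you sketch.
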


\begin{proof}
 We apply Proposition~\ref{haarbesovnorm} and the now classical idea of Roth's proof of the bound on the $L_2$-discrepancy \cite{R54}. Fix an arbitrary infinite sequence $\S_d$. Then for every $N\in\N$ we can choose $n\in\N$ such that $2N\le 2^n < 4N$. Let $\bsj\in\N_0^d$ with $|\bsj|=n$. Then there are at least $2^{n-1}=2^{|\bsj|-1}$ values of $\bsm\in\D_{\bsj}$ for which $I_{\bsj,\bsm}\cap \P_{N,d} = \emptyset$, where $\P_{N,d}$ is again the set consisting of the first $N$ elements of $\S_d$. For such $I_{\bsj,\bsm}$ which do not contain any points of $\P_{N,d}$ we give the Haar coefficients as $|\langle D_{\S_d}^N,h_{\bsj,\bsm}\rangle| \asymp 2^{-2|\bsj|}$, relying on Lemma~\ref{lem_haar_coeff_vol} and \ref{lem_haar_coeff_number}. We apply Proposition~\ref{haarbesovnorm} to obtain (with some constant $c>0$)
\begin{align*}
 \|D_{\S_d}^N|S_{p,q}^s B([0,1)^d)\|^q & \asymp \sum_{\bsj\in\N_{-1}^d} 2^{|\bsj|(s-1/p+1)q}\left(\sum_{\bsm\in\D_{\bsj}} |\langle D_{\S_d}^N,h_{\bsj,\bsm}\rangle|^p\right)^{q/p} \\
 & \ge c\,\sum_{|\bsj|=n} 2^{|\bsj|(s-1/p+1)q} \, 2^{-2|\bsj|q} \, 2^{(|\bsj|-1)q/p} \\
 & \asymp \sum_{|\bsj|=n} 2^{|\bsj|(s-1)q} \asymp n^{d-1} 2^{n(s-1)q}.
\end{align*}
\end{proof}

\begin{lem} \label{besovsup}
 Let $p,q\in(0,\infty)$. Let $f: [0,1]^{d+1} \rightarrow \R$ be in $S_{p,q}^0 B([0,1]^d)$. Then we have
\[ \|f|S_{p,q}^0 B([0,1]^{d+1}) \| \le \sup_{t_{d+1}\in[0,1]} \|f(\cdot,t_{d+1})|S_{p,q}^0 B([0,1]^d)\|. \]
\end{lem}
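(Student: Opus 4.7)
The proof will parallel that of Lemma~\ref{le0}, replacing the $L_2$-orthogonality/Parseval computation used there for the $\mathcal{D}_0^d$-seminorm by the Haar coefficient characterization of the Besov seminorm given in Proposition~\ref{haarbesovnorm}. Since that characterization requires $1/p-1 < s < \min(1/p,1)$, for $s=0$ it is available under $p>1$, which covers the range of parameters in the applications to Theorems~\ref{thm:besov} and~\ref{thm:triebel}; the remaining case $p=1$ can be recovered by a standard density/interpolation argument.

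The key construction is: for each $\bsj_*\in\N_{-1}^d$ and $\bsm_*\in\D_{\bsj_*}$, introduce the one-dimensional function
\[ H_{\bsj_*,\bsm_*}(t) := \langle f(\cdot,t), h_{\bsj_*,\bsm_*}\rangle = \int_{[0,1]^d} f(\bsx_*,t)\, h_{\bsj_*,\bsm_*}(\bsx_*) \dint \bsx_*. \]
Writing $\bsj = (\bsj_*, j_{d+1})$ and $\bsm = (\bsm_*, m_{d+1})$, so that $|\bsj| = |\bsj_*| + \max(j_{d+1},0)$, Fubini gives the factorization
\[ \langle f, h_{\bsj,\bsm}\rangle = \int_0^1 H_{\bsj_*,\bsm_*}(t)\, h_{j_{d+1},m_{d+1}}(t)\dint t, \]
i.e., the $(d+1)$-dimensional Haar coefficients of $f$ are precisely the one-dimensional Haar coefficients of $H_{\bsj_*,\bsm_*}$ in the variable $t$.

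Applying Proposition~\ref{haarbesovnorm} on the left-hand side expresses $\|f|S_{p,q}^0 B([0,1]^{d+1})\|^q$ as a weighted $\ell^q$-of-$\ell^p$ norm over the $\langle f, h_{\bsj,\bsm}\rangle$, while applying the one-dimensional version of Proposition~\ref{haarbesovnorm} to each $H_{\bsj_*,\bsm_*}$ identifies the partial sum over $(j_{d+1},m_{d+1})$, for fixed $(\bsj_*,\bsm_*)$, with the one-dimensional $B_{p,q}^0([0,1])$-norm of $H_{\bsj_*,\bsm_*}$. Interchanging the $\bsm_*$-sum with the outer $(\cdot)^{q/p}$-operation via Minkowski's inequality applied in the direction dictated by whether $q\le p$ or $q\ge p$, and then performing a Fubini-type rearrangement, majorises $\|f|S_{p,q}^0 B([0,1]^{d+1})\|^q$ by an integral over $t_{d+1}\in[0,1]$ of a quantity comparable to $\|f(\cdot,t_{d+1})|S_{p,q}^0 B([0,1]^d)\|^q$. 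The conclusion then follows from $\int_0^1 \phi(t)\dint t \le \sup_{t\in[0,1]} \phi(t)$.

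The main technical obstacle is the Fubini/Minkowski interchange step, whose precise form depends on the sign of $q-p$: in particular, when $q>p$ one needs an auxiliary one-dimensional embedding relating the $B_{p,q}^0([0,1])$-norm of $H_{\bsj_*,\bsm_*}$ to an $L_p$-type integral, so that the Haar-coefficient sum can be converted into an integral over $t_{d+1}$. These manipulations may introduce constants depending on $p,q,d$, which can be absorbed into the equivalence constants of Proposition~\ref{haarbesovnorm} and do not affect the use of this lemma in the Proinov-type argument that will follow.
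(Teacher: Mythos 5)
Your starting point coincides with the paper's: both arguments factor the $(d+1)$-dimensional Haar coefficients as one-dimensional Haar coefficients in $t_{d+1}$ of the functions $H_{\bsj_*,\bsm_*}(t)=\langle f(\cdot,t),h_{\bsj_*,\bsm_*}\rangle$ and then invoke Proposition~\ref{haarbesovnorm}. From that point on, however, your text is a plan rather than a proof, and the step you defer is exactly the content of the lemma. The paper's proof is two lines: it bounds $|\langle f,h_{\bsj,\bsm}\rangle_{d+1}|=|\langle H_{\bsj_*,\bsm_*},h_{j_{d+1},m_{d+1}}\rangle_1|\le\sup_{t}|H_{\bsj_*,\bsm_*}(t)|\,2^{-\max(j_{d+1},0)}$ and then uses that $\D_{j_{d+1}}$ has $2^{\max(j_{d+1},0)}$ elements, which exactly cancels the $j_{d+1}$-part of the weight $2^{|\bsj|(1-1/p)q}$ at $s=0$. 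Your route instead tries to recognize the partial sum over $(j_{d+1},m_{d+1})$ as the one-dimensional $B^0_{p,q}$-quasi-norm of $H_{\bsj_*,\bsm_*}$ and then pass to $\int_0^1\|f(\cdot,t)|S^0_{p,q}B([0,1]^d)\|^q\dint t\le\sup_t\|f(\cdot,t)|S^0_{p,q}B([0,1]^d)\|^q$. That passage cannot be a ``Fubini-type rearrangement'': the one-dimensional Besov quasi-norm of $H_{\bsj_*,\bsm_*}$ in $t$ is not an integral over $t$, and dominating it by $\sup_t|H_{\bsj_*,\bsm_*}(t)|$ is equivalent to the embedding $L_\infty([0,1])\hookrightarrow B^0_{p,q}([0,1])$, which is precisely the delicate point: under the only available pointwise bound $|\langle H,h_{j,m}\rangle|\le\|H\|_\infty 2^{-j}$, every level $j$ contributes a quantity comparable to $\|H\|_\infty^q$, so the sum over levels is not finite without additional input. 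The ``auxiliary one-dimensional embedding'' you invoke for $q>p$ is therefore not a technical footnote to be absorbed into constants; it is the heart of the matter, and you have not supplied it.

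Two further concrete gaps. First, the Minkowski interchange between the $\bsm_*$-summation and the outer power $(\cdot)^{q/p}$ is a genuine inequality only for one sign of $q-p$; for the other sign the inequality you would need is false in general, so the case distinction cannot simply be waved away. Second, your fallback for $p=1$ (``a standard density/interpolation argument'') is not an argument: Proposition~\ref{haarbesovnorm} genuinely excludes $s=0$, $p=1$, and density does not upgrade an equivalence of quasi-norms that fails on a dense set. If you want a write-up in the spirit of the paper, insert the coefficient bound $|\langle f,h_{\bsj,\bsm}\rangle_{d+1}|\le\sup_t|H_{\bsj_*,\bsm_*}(t)|\,2^{-\max(j_{d+1},0)}$ into Proposition~\ref{haarbesovnorm}, use the cardinality of $\D_{j_{d+1}}$ to cancel the $j_{d+1}$-weight, pull the supremum over $t$ outside the $\bsm_*$-sum, and then address explicitly the remaining summation over $j_{d+1}\in\N_{-1}$ -- this last summation is where any complete argument (the paper's own proof is extremely terse precisely here) must say something more, e.g.\ by exploiting decay of the $t$-Haar coefficients of $H_{\bsj_*,\bsm_*}$ beyond a finite level, which does hold for the step functions $m(\cdot)D_{\S_d}^{m(\cdot)}$ to which the lemma is actually applied in Lemma~\ref{proinovbesov}.
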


\begin{proof}
 We write $\langle f,g \rangle_d$ to keep track of the dimension. We have
\[ |\langle f,h_{\bsj,\bsm}\rangle_{d+1}| = |\langle\langle f,h_{\bsj',\bsm'}\rangle_{d},h_{\bsj_{d+1},\bsm_{d+1}}\rangle_1| \le \sup_{t_{d+1}\in[0,1]}|\langle f(\cdot,t_{d+1}),h_{\bsj',\bsm'}\rangle_d| 2^{-\max(j_{d+1},0)}. \]
The result then follows from the fact that $\D_{j_{d+1}}$ contains $2^{j_{d+1}}$ elements and Proposition~\ref{haarbesovnorm}.
\end{proof}

We need the following lower bound for finite point sets from \cite{T10}.

\begin{prp} \label{besovfinitetriebel}
 For every $d\in\N$ and all $1\le p,q <\infty$ and $1/p-1<s<1/p$ there exists a positive constant $c_{p,q,s,d}>0$ with the following property: for every integer $N\ge 2$ and every $N$-element point set $\P_{N,d}$ in $[0,1)^d$ we have
 \[ \|D_{\P_{N,d}}|S_{p,q}^s B([0,1)^d)\| \ge c_{p,q,s,d}\, N^{s-1}\, (\log N)^{(d-1)/q}. \]
\end{prp}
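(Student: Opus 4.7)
The plan is to adapt Roth's classical orthogonal-function argument, which has already been carried out for sequences in Lemma~\ref{lemsge0}; the same steps go through for finite point sets without essential change, since that argument only exploits the existence of ``many'' dyadic boxes on a fixed level that miss all $N$ points. The starting point is the Haar-coefficient characterization of the Besov quasi-norm in Proposition~\ref{haarbesovnorm}, which is applicable under our assumption $1/p-1<s<1/p$ (since $1\le p<\infty$ implies $1/p\le 1$, so $\min(1/p,1)=1/p$).

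First I would fix the point set $\P_{N,d}$, choose $n\in\N$ so that $2N\le 2^n<4N$, and restrict the Haar-coefficient sum in Proposition~\ref{haarbesovnorm} to multi-indices $\bsj\in\N_0^d$ with $|\bsj|=n$. For any such $\bsj$, the unit cube is split into the $2^{|\bsj|}=2^n$ pairwise disjoint dyadic boxes $\{I_{\bsj,\bsm}\}_{\bsm\in\D_{\bsj}}$, so at least
\[ 2^n - N \ge 2^{n-1} \]
of them contain no point of $\P_{N,d}$. For such an empty box, Lemma~\ref{lem_haar_coeff_number} implies that the counting part of $D_{\P_{N,d}}$ contributes zero, so
\[ |\langle D_{\P_{N,d}},h_{\bsj,\bsm}\rangle| \;=\; |\langle x_1\cdots x_d,h_{\bsj,\bsm}\rangle| \;\asymp\; 2^{-2|\bsj|}, \]
by Lemma~\ref{lem_haar_coeff_vol}. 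Combining these two observations gives, for each $\bsj$ with $|\bsj|=n$,
\[ \sum_{\bsm\in\D_{\bsj}} |\langle D_{\P_{N,d}},h_{\bsj,\bsm}\rangle|^p \;\gg\; 2^{n-1}\,2^{-2np}. \]

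The rest is book-keeping. Plugging the above into Proposition~\ref{haarbesovnorm}, keeping only the layer $|\bsj|=n$, and using that the number of $\bsj\in\N_0^d$ with $|\bsj|=n$ is of order $n^{d-1}$, I obtain
\[ \|D_{\P_{N,d}}|S_{p,q}^sB([0,1)^d)\|^q \;\gg\; \sum_{|\bsj|=n} 2^{|\bsj|(s-1/p+1)q}\bigl(2^{n-1}\,2^{-2np}\bigr)^{q/p} \;\asymp\; n^{d-1}\,2^{n(s-1)q}. \]
Since $n\asymp\log N$ and $2^n\asymp N$, taking $q$-th roots yields the claimed bound $N^{s-1}(\log N)^{(d-1)/q}$.

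The main technical subtlety is checking that the hypothesis $1/p-1<s<\min(1/p,1)$ of Proposition~\ref{haarbesovnorm} is indeed met throughout the stated range $1/p-1<s<1/p$; this is automatic here because $1\le p$ forces $1/p\le 1$, so $\min(1/p,1)=1/p$. Otherwise the proof is a direct reading of Roth's lower-bound technique through the Haar basis, and in particular does not use any quasi-Monte Carlo or digital-net structure on the point set $\P_{N,d}$.
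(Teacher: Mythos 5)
Your proposal is correct, but it takes a different route from the paper: the paper does not prove Proposition~\ref{besovfinitetriebel} at all, it simply imports it as a known lower bound from Triebel's book \cite{T10}, whereas you give a self-contained proof via Roth's orthogonal-function method. Your argument is in fact a near-verbatim transplant of the paper's own proof of Lemma~\ref{lemsge0} (stated there for initial segments of infinite sequences and for $0\le s<1/p$) to arbitrary finite point sets and to the full range $1/p-1<s<1/p$; this works because that proof never uses the sequence structure (only the finite set $\P_{N,d}$ of the first $N$ points enters) and never uses $s\ge 0$ (the exponent bookkeeping $2^{n(s-1/p+1)q}\cdot 2^{nq/p}\cdot 2^{-2nq}=2^{n(s-1)q}$ is valid for all $s$ in the range where Proposition~\ref{haarbesovnorm} applies, and as you note $\min(1/p,1)=1/p$ since $p\ge 1$). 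Two minor points worth making explicit: each point of $\P_{N,d}$ lies in the interior of at most one box on a fixed level $\bsj$, so at least $2^n-N\ge 2^{n-1}$ boxes have interiors free of points, which is what Lemma~\ref{lem_haar_coeff_number} actually requires; and if $D_{\P_{N,d}}$ were not in $S_{p,q}^sB([0,1)^d)$ the bound is vacuously true, so applying Proposition~\ref{haarbesovnorm} is harmless. What your approach buys is a proof that stays entirely within the toolkit the paper already develops, at the cost of being a restatement of a classical argument; what the citation buys the authors is brevity. Either is acceptable.
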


\begin{lem} \label{proinovbesov}
 Let $1\le p,q<\infty$. For $d \in\N$ let $\S_d=(\bsy_k)_{k\ge 0}$, where $\bsy_k=(y_{1,k},\ldots,y_{d,k})$ for $k\in \N_0$, be an arbitrary sequence in $[0,1)^d$. Then for every $N\in\N$ there exists an $n\in\{1,2,\ldots,N\}$ such that
\[ n\| D_{\S_d}^n|S_{p,q}^0 B([0,1)^d)\| +1 \ge N \|D_{\P_{N,d+1}}|S_{p,q}^0 B([0,1)^{d+1})\|, \]
where $\P_{N,d+1}$ is the finite point set in $[0,1)^{d+1}$ consisting of the points
\[ \bsx_k = (y_{1,k},\ldots,y_{d,k},k/N) \text{ for } k=0,1,\ldots,N-1. \]
\end{lem}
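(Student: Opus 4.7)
The plan is to adapt the proof of Lemma~\ref{le1} almost verbatim, replacing the ${\cal D}_0^d$-norm by the $S_{p,q}^0 B$-norm and using Lemma~\ref{besovsup} in place of Lemma~\ref{le0}.

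First I would choose $n \in \{1,2,\ldots,N\}$ maximizing the map $k\mapsto k\,\|D_{\S_d}^k\,|\,S_{p,q}^0B([0,1)^d)\|$. The point-counting identity derived in the proof of Lemma~\ref{le1} is independent of which norm is being used, and gives, for $\bst = (\bst_*, t_{d+1}) \in [0,1)^{d+1}$,
\[
N\,D_{\P_{N,d+1}}(\bst) \;=\; m(t_{d+1})\,D_{\S_d}^{m(t_{d+1})}(\bst_*) \;+\; \varphi(\bst),
\]
with $m(t_{d+1}) = \lceil N t_{d+1}\rceil \in \{1,\ldots,N\}$ and
$\varphi(\bst) = t_1\cdots t_d\,(m(t_{d+1}) - N t_{d+1})$, which satisfies $\|\varphi\|_\infty \le 1$.

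Next I would apply the triangle inequality (valid because $S_{p,q}^0B([0,1)^{d+1})$ is a Banach space for $1\le p,q<\infty$) to bound
\[
N\|D_{\P_{N,d+1}}\,|\,S_{p,q}^0B([0,1)^{d+1})\| \;\le\; \|m(\cdot)\,D_{\S_d}^{m(\cdot)}(\bst_*)\,|\,S_{p,q}^0B([0,1)^{d+1})\| \;+\; \|\varphi\,|\,S_{p,q}^0B([0,1)^{d+1})\|.
\]
The first summand is handled by Lemma~\ref{besovsup} applied to $f(\bst_*,t_{d+1}) = m(t_{d+1})\,D_{\S_d}^{m(t_{d+1})}(\bst_*)$, giving a bound by $\sup_{t_{d+1}\in [0,1]}\|m(t_{d+1})\,D_{\S_d}^{m(t_{d+1})}\,|\,S_{p,q}^0 B([0,1)^d)\|$; since $m(t_{d+1})\in\{1,\ldots,N\}$, the choice of $n$ makes this at most $n\|D_{\S_d}^n\,|\,S_{p,q}^0 B([0,1)^d)\|$.

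The remaining task is to show $\|\varphi\,|\,S_{p,q}^0 B([0,1)^{d+1})\| \le 1$ (or at worst an absolute constant that can be absorbed by adjusting the $+1$ in the statement). Since $\varphi$ is the tensor product $\pi(\bst_*)\,\sigma(t_{d+1})$ with $\pi(\bst_*) = t_1\cdots t_d$ smooth and $\sigma(t) = \lceil N t\rceil - N t$ the $1/N$-periodic sawtooth of height $1$, the dominating mixed structure together with Proposition~\ref{haarbesovnorm} factorises the problem into one-dimensional Haar estimates. For $\sigma$ the crucial point is that the $N$-periodicity produces the bound $|\langle\sigma, h_{j_{d+1},m}\rangle| \lesssim 1/N$ whenever $2^{j_{d+1}} \le N$ (by cancellation across full periods), while $|\langle\sigma, h_{j_{d+1},m}\rangle| \asymp N\,2^{-2j_{d+1}}$ for $2^{j_{d+1}} > N$ (as $\sigma$ is then affine on each dyadic interval). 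Summing these two regimes against the Besov weights yields a bound on $\|\sigma\,|\,B_{p,q}^0([0,1))\|$ independent of $N$, and the same holds for $\pi$ trivially.

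The main obstacle is precisely this last uniform estimate on $\|\varphi\,|\,S_{p,q}^0 B\|$: one must exploit the periodic cancellation of the sawtooth at low dyadic levels to beat the crude bound $|\langle\sigma, h_{j,m}\rangle|\le 2^{-j}$, which would fail to give a bound uniform in $N$. A minor additional difficulty is the boundary case $p=1$, where Proposition~\ref{haarbesovnorm} does not directly apply and one has to invoke an alternative characterisation or an embedding argument to achieve the same conclusion.
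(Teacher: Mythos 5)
Your proposal is correct and follows essentially the same route as the paper: choose $n$ maximizing $k\mapsto k\|D_{\S_d}^k|S_{p,q}^0B([0,1)^d)\|$, reuse the decomposition $ND_{\P_{N,d+1}}=mD_{\P_{m,d}}+\varphi$ from Lemma~\ref{le1}, apply the triangle inequality, and finish with Lemma~\ref{besovsup}. The one place you go beyond the paper is the bound on $\|\varphi|S_{p,q}^0B([0,1)^{d+1})\|$: the paper dismisses this with ``analogously to Lemma~\ref{le1}'', but the argument used there ($\|\varphi|{\cal D}_0^{d+1}\|\le\|\varphi|L_2\|\le\|\varphi\|_\infty\le 1$) does not transfer, since $L_\infty$ does not embed into $S_{p,q}^0B$ for general $p,q$; your direct Haar-coefficient estimate for the sawtooth $\sigma(t)=\lceil Nt\rceil-Nt$, exploiting cancellation over full periods at levels $2^j\le N$, is the right fix and the two regimes you identify do sum to an $N$-independent constant. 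One small inaccuracy: for $2^j>N$ the function $\sigma$ is \emph{not} affine on the $O(N)$ dyadic intervals at level $j$ that straddle a period boundary $k/N$; there one only has the crude bound $|\langle\sigma,h_{j,m}\rangle|\le 2^{-j}$, but since at most $N$ such intervals occur per level their contribution $2^{j(1-1/p)q}\bigl(N2^{-jp}\bigr)^{q/p}=N^{q/p}2^{-jq/p}$ still sums to $O(1)$ over $2^j>N$, so your conclusion stands.
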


\begin{proof}
 The proof is analogous to the proof of Lemma~\ref{le1}. We choose $n\in\{1,2,\ldots,N\}$ such that
\[ \|n D_{\S_d}^n |S_{p,q}^0 B([0,1)^d)\| = \max_{k=1,\ldots,N} \|k D_{\S_d}^k|S_{p,q}^0 B([0,1)^d)\|. \]
For $\bst\in[0,1)^{d+1}$ we have \eqref{formle1}
\[ N D_{\P_{N,d+1}}(\bst) = m D_{\P_{m,d}}(\bst_*) +\varphi(\bst) \]
(see the proof of Lemma~\ref{le1} for the notation). Then analogously to the proof of Lemma~\ref{le1}  we show that
\[ N\|D_{\P_{N,d+1}}|S_{p,q}^0 B([0,1)^{d+1})\| \le \|m D_{\S_d}^m|S_{p,q}^0 B([0,1)^{d+1})\|+1. \]
Finally we use Lemma~\ref{besovsup} to obtain
\begin{align*} \|m(\cdot) D_{\S_d}^{m(\cdot)}|S_{p,q}^0 B([0,1)^{d+1})\| &\le \sup_{t_{d+1}\in [0,1]} \|m(t_{d+1}) D_{\S_d}^{m(t_{d+1})}|S_{p,q}^0 B([0,1)^d)\| \\&\le \|n D_{\S_d}^n|S_{p,q}^0 B([0,1)^d)\|. \end{align*}
\end{proof}

\begin{proof}[Proof of Eq.~\eqref{besov:lbd1} and \eqref{besov:lbd2} in Theorem~\ref{thm:besov}]
The case where $s>0$ is covered by Lemma~\ref{lemsge0}, hence we only have to consider the case $s=0$. We use the notation from Lemma~\ref{le1}. For the finite point set $\P_{N,d+1}$ in $[0,1)^{d+1}$  we obtain from Proposition~\ref{besovfinitetriebel} that
\[ N \|D_{\P_{N,d+1}}|S_{p,q}^0 B([0,1)^{d+1})\|\ge c_{p,q,0,d+1} (\log N)^{d/q} \]
for some constant $c_{p,q,0,d+1}>0$ which is independent of $N$. Now we apply Lemma~\ref{proinovbesov} and obtain an $n\in\{1,2,\ldots,N\}$ such that
\begin{align}\label{formbesov}
 n \|D_{\S_d}^n|S_{p,q}^0 B([0,1)^d)\|+1\ge N\|D_{\P_{N,d+1}}|S_{p,q}^0 B([0,1)^{d+1})\|\ge c_{p,q,0,d+1} (\log N)^{d/q}.
\end{align}
The proof that there are infinitely many $n\in\N$ for which the \eqref{formbesov} holds works analogously to the proof of Lemma~\ref{lbdDpts}.
\end{proof}

\begin{proof}[Proof of Eq.~\eqref{triebel:lbd1} and \eqref{triebel:lbd2} in Theorem~\ref{thm:triebel}]
 We apply Proposition~\ref{prp_emb_BF}, namely $S_{p,q}^s F\hookrightarrow S_{\min(p,q),q}^s B$ and we have for a constant $c>0$ from Theorem~\ref{thm:besov}
\begin{align*} \|D_{\S_d}^N | S_{p,q}^sF\| &\ge c\, \|D_{\S_d}^N | S_{\min(p,q),q}^sB\| \\&\ge c_{p,q,s,d}\, \begin{cases} N^{-1}\,\left(\log N\right)^{d/q} & \mbox{ if } s=0, \\ N^{(s-1)}\,\left(\log N\right)^{(d-1)/q} & \mbox{ if } 0<s<1/\min(p,q). \end{cases} \end{align*}
\end{proof}

\subsection*{Acknowledgement}
The authors would like to thank William Chen for his very helpful remarks and suggestions.

\addcontentsline{toc}{chapter}{References}

%\pagebreak

\noindent {\sc Josef Dick} 

\noindent School of Mathematics and Statistics, The University of New South Wales, Sydney NSW 2052, Australia, 
email: josef.dick(AT)unsw.edu.au\\

\noindent {\sc Aicke Hinrichs}

\noindent Institut f\"ur Funktionalanalysis, Johannes Kepler Universit\"at Linz, Altenbergerstra{\ss}e 69, 4040 Linz, \"Osterreich, 
email: aicke.hinrichs(AT)jku.at\\

\noindent {\sc Lev Markhasin}

\noindent Institut f\"ur Stochastik und Anwendungen, Universit\"at Stuttgart, Pfaffenwaldring 57, 70569 Stuttgart, Deutschland, 
email: lev.markhasin(AT)mathematik.uni-stuttgart.de\\

\noindent {\sc Friedrich Pillichshammer}

\noindent Institut f\"ur Finanzmathematik und angewandte Zahlentheorie, Johannes Kepler Universit\"at Linz, Altenbergerstra{\ss}e 69, 4040 Linz, \"Osterreich,
email: friedrich.pillichshammer(AT)jku.at


\begin{thebibliography}{999999999}
    %\bibitem{ABLV14} G. Amirkhanyan, D. Bilyk, M. Lacey, A. Vagharshakyan, \emph{Estimates of the discrepancy function in exponential Orlicz spaces.} (2014), to appear.
    \bibitem{BC} J. Beck, W. W. L. Chen, Irregularities of Distribution. Cambridge University Press, Cambridge, 1987.
    %\bibitem{B82} R. B\'{e}jian, \emph{Minoration de la discr\'{e}pance d'une suite quelconque sur $T$.} Acta Arith. {\bf 41} (1982), 185--202. 
    \bibitem{BLPV09} D. Bilyk, M. T. Lacey, I. Parissis, A. Vagharshakyan, \emph{Exponential squared integrability of the discrepancy function in two dimensions.} Mathematika \textbf{55} (2009), 2470--2502.
    \bibitem{BLV08} D. Bilyk, M. T. Lacey, A. Vagharshakyan, \emph{On the small ball inequality in all dimensions.} J. Funct. Anal. \textbf{254} (2008), 2470--2502.
    \bibitem{BM15} D. Bilyk, L. Markhasin, \emph{BMO and exponential Orlicz space estimates of the discrepancy function in arbitrary dimension.} To appear in J. Anal. Math., 2016. 
    \bibitem{CF80} S.-Y. A. Chang, R. Fefferman, \emph{A continuous version of duality of $H^1$ with BMO on the bidisc.} Ann. of Math. \textbf{112} (1980), 179--201.
    \bibitem{CWW85} S.-Y. A. Chang, J. M. Wilson, T. H. Wolff, \emph{Some weighted norm inequalities concerning the Schr\"odinger operators.} Comment. Math. Helv. \textbf{60} (1985), 217--246.
    \bibitem{C80} W. W. L. Chen, \emph{On irregularities of distribution.} Mathematika \textbf{27} (1981), 153--170.
    %\bibitem{C83} W. W. L. Chen, \emph{On irregularities of distribution II.} Quart. J. Math. Oxford {\bf 34} (1983), 257--279.
    \bibitem{CS02} W. W. L. Chen, M. M. Skriganov, \emph{Explicit constructions in the classical mean squares problem in irregularities of point distribution.} J. Reine Angew. Math. \textbf{545} (2002), 67--95.
    %\bibitem{D56} H. Davenport, \emph{Note on irregularities of distribution.} Mathematika \textbf{3} (1956), 131--135.
    \bibitem{D07} J. Dick, \emph{Explicit constructions of quasi-Monte Carlo rules for the numerical integration of high-dimensional periodic functions.} SIAM J. Numer. Anal. \textbf{45} (2007), 2141--2176.
    \bibitem{D08} J. Dick, \emph{Walsh spaces containing smooth functions and quasi-Monte Carlo rules of arbitrary high order.} SIAM J. Numer. Anal. \textbf{46} (2008), 1519--1553.
    %\bibitem{D09} J. Dick, \emph{On Quasi-Monte Carlo rules achieving higher order convergence.} In: P. L'Ecuyer, A. Owen (eds.), Monte Carlo and Quasi-Monte Carlo Methods 2008, pp. 73--96, Springer, 2009.
    \bibitem{D14} J. Dick, \emph{Discrepancy bounds for infinite-dimensional order two digital sequences over $\F_2$.} J. Number Theory \textbf{136} (2014), 204--232.
    \bibitem{DB09} J. Dick, J. Baldeaux, \emph{Equidistribution properties of generalized nets and sequences.} In: P. L'Ecuyer, A. Owen (eds.), Monte Carlo and Quasi-Monte Carlo Methods 2008, pp. 305--323, Springer, 2009.
    \bibitem{DHMP16} J. Dick, A. Hinrichs, L. Markhasin, F. Pillichshammer, \emph{Optimal $L_p$-discrepancy bounds for second order digital sequences.} Israel J. Math. (to appear). Also available at arXiv:1601.07281v2 [math.NT].
    %\bibitem{DP05b} J. Dick, F. Pillichshammer, \emph{On the mean square weighted $L_2$ discrepancy of randomized digital $(t,m,s)$-nets over $\mathbb{Z}_2$.} Acta Arith. {\bf 117} (2005), 371--403.
    \bibitem{DP10} J. Dick, F. Pillichshammer, Digital nets and sequences. Discrepancy theory and quasi-Monte Carlo integration. Cambridge University Press, Cambridge, 2010.
    \bibitem{DP14a} J. Dick, F. Pillichshammer, \emph{Optimal $\mathcal{L}_2$ discrepancy bounds for higher order digital sequences over the finite field $\F_2$.} Acta Arith. \textbf{162} (2014), 65--99.
    \bibitem{DP14b} J. Dick, F. Pillichshammer, \emph{Explicit constructions of point sets and sequences with low discrepancy.} In: P. Kritzer, H. Niederreiter, F. Pillichshammer, A. Winterhof (eds.), Uniform distribution and quasi-Monte Carlo methods - Discrepancy, Integration and Applications, Radon Series of Computational and Applied Mathematics, pp. 63--86, De Gruyter, 2014.
    %\bibitem{Do84} N. M. Dobrovol'ski{\u\i}, \emph{An effective proof of Roth's theorem on quadratic dispersion.} Uspekhi Mat. Nauk {\bf  39} (1984), 155--156; English translation in Russian Math. Surv. {\bf 39} (1984), 117--118.
    \bibitem{DT} M. Drmota, R.F. Tichy, Sequences, discrepancies and applications. Lecture Notes in Mathematics 1651, Springer Verlag, Berlin, 1997. 
    %\bibitem{fkp15} H. Faure, P. Kritzer, and F. Pillichshammer, \emph{From van der Corput to modern constructions of sequences for quasi-Monte Carlo rules.} Indag. Math. (N.S.) {\bf 26} (2015), 760--822. 
    %\bibitem{Frolov} K. K. Frolov, \emph{Upper bound of the discrepancy in metric $L\sb{p}$, $2\leq p<\infty $.} Dokl. Akad. Nauk SSSR {\bf 252} (1980), 805--807.
    %\bibitem{H81} G. Hal\'asz, On Roth's method in the theory of irregularities of point distributions. Recent progress in analytic number theory, Vol. 2, 79--94. Academic Press, London-New York, 1981.
    %\bibitem{HY00} F. J. Hickernell and R.-X. Yue, \emph{The mean square discrepancy of scrambled $(t,s)$-sequences.} SIAM J. Numer. Anal. {\bf 38} (2000), 1089--1112.
    %\bibitem{HM11} A. Hinrichs, L. Markhasin, \emph{On lower bounds for the $L_2$-discrepancy.} J. Complexity \textbf{27} (2011), 127--132.
    \bibitem{H10} A. Hinrichs, \emph{Discrepancy of Hammersley points in Besov spaces of dominating mixed smoothness.} Math. Nachr. \textbf{283} (2010), 478--488.
    %\bibitem{hl61}  E. Hlawka, \emph{\"Uber die Diskrepanz mehrdimensionaler Folgen mod1.} Math. Z. {\bf 77} (1961), 273--284. (German)
    \bibitem{K15} R. Kritzinger, \emph{$L_p$- and $S_{p,q}^rB$-discrepancy of the symmetrized van der Corput sequence and modified Hammersley point sets in arbitrary bases.} J. Complexity {\bf 33} (2016), 145--168.
    %\bibitem{KP15} R. Kritzinger, F. Pillichshammer, \emph{$L_p$-discrepancy of the symmetrized van der Corput sequence.}  Arch. Math. {\bf 104} (2015), 407--418.
    \bibitem{KN} L. Kuipers and H. Niederreiter, Uniform Distribution of Sequences. John Wiley, New York, 1974. Reprint, Dover Publications, Mineola, NY, 2006.
    %\bibitem{L14} G. Larcher, \emph{On the star discrepancy of sequences in the unit interval.} J. Complexity {\bf 31} (2015), 474--485.
    \bibitem{LP14} G. Leobacher, F. Pillichshammer, Introduction to quasi-Monte Carlo integration and applications. Compact Textbooks in Mathematics, Birkh\"auser/Springer, Cham, 2014.
    \bibitem{LT77} J. Lindenstrauss, L. Tzafriri, Classical Banach spaces, I, Springer, Berlin, 1977.
    \bibitem{M13a} L. Markhasin, \emph{Discrepancy of generalized Hammersley type point sets in Besov spaces with dominating mixed smoothness.} Unif. Distrib. Theory \textbf{8} (2013), 135--164.
    \bibitem{M13b} L. Markhasin, \emph{Quasi-Monte Carlo methods for integration of functions with dominating mixed smoothness in arbitrary dimension.} J. Complexity \textbf{29} (2013), 370--388.
    \bibitem{M13c} L. Markhasin, \emph{Discrepancy and integration in function spaces with dominating mixed smoothness.} Dissertationes Math. \textbf{494} (2013), 1--81.
    \bibitem{M15} L. Markhasin, \emph{$L_p$- and $S_{p,q}^rB$-discrepancy of (order $2$) digital nets.} Acta Arith. \textbf{168} (2015), 139-159.
    \bibitem{Mat99} J. Matou\v{s}ek, Geometric Discrepancy. An illustrated guide. Algorithms and Combinatorics, 18, Springer-Verlag, Berlin, 1999.
    \bibitem{N87} H. Niederreiter, \emph{Point sets and sequences with small discrepancy.} Monatsh. Math. \textbf{104} (1987), 273--337.
    \bibitem{N92} H. Niederreiter, Random number generation and quasi-Monte Carlo methods. No. 63 in CBMS-NSF Series in Applied Mathematics, SIAM, Philadelphia, 1992.
    \bibitem{NX96} H. Niederreiter and C.P. Xing, \emph{Low-discrepancy sequences and global function fields with many rational places.} Finite Fields Appl. {\bf 2} (1996), 241--273.
    %\bibitem{NW08} E. Novak, H. Wo{\'z}niakowski, Tractability of multivariate problems. Volume I: Linear Information. European Mathematical Society Publishing House, Z{\"u}rich, 2008.
    \bibitem{P86} P. D. Proinov, \emph{On irregularities of distribution.} C. R. Acad. Bulgare Sci. \textbf{39} (1986), 31--34. 
    \bibitem{R54} K. F. Roth, \emph{On irregularities of distribution.} Mathematika \textbf{1} (1954), 73--79.
    \bibitem{R80} K. F. Roth, \emph{On irregularities of distribution. IV.} Acta Arith. \textbf{37} (1980), 67--75.
    \bibitem{S72} W. M. Schmidt, \emph{Irregularities of distribution. VII.} Acta Arith. \textbf{21} (1972), 45--50.
    \bibitem{S77} W. M. Schmidt, Irregularities of distribution X. Number Theory and Algebra, 311--329. Academic Press, New York, 1977.
    %\bibitem{Skr89} M. M. Skriganov, \emph{Lattices in algebraic number fields and uniform distribution mod $1$.} (Russian) Algebra i Analiz {\bf 1} (1989), 207--228; English translation in Leningrad Math. J. {\bf 1} (1990), 535--558.
    %\bibitem{Skr94}  M. M. Skriganov, \emph{Constructions of uniform distributions in terms of geometry of numbers.} Algebra i Analiz {\bf 6} (1994), 200--230; English translation in St. Petersburg Math. J. {\bf 6} (1995), 635--664.
    \bibitem{S06} M. M. Skriganov, \emph{Harmonic analysis on totally disconnected groups and irregularities of point distributions.} J. Reine Angew. Math. \textbf{600} (2006), 25--49.
    \bibitem{S67} I. M. Sobol', \emph{The distribution of points in a cube and the approximate evaluation of integrals.} Zh. Vychisl. Mat. i Mat. Fiz. \textbf{7} (1967), 784--802.
    \bibitem{T93} S. Tezuka, \emph{Polynomial arithmetic analogue of Halton sequences.} ACM Trans. Model. Computer Simul. {\bf 3} (1993), 99--107.
    \bibitem{T10} H. Triebel, Bases in function spaces, sampling, discrepancy, numerical integration. European Mathematical Society Publishing House, Z{\"u}rich, 2010.
    \bibitem{T10a} H. Triebel, \emph{Numerical integration and discrepancy. A new approach.} Math. Nachr. \textbf{283} (2010), 139--159.
    %\bibitem{W} H. Weyl, \emph{\"Uber die Gleichverteilung mod. Eins.} Math. Ann. {\bf 77} (1916), 313--352. (German)
    %\bibitem{zar} S. C. Zaremba, \emph{Some applications of multidimensional integration by parts.} Ann. Polon. Math. {\bf 21} (1968), 85--96.
\end{thebibliography}
\end{document}